\documentclass[11pt]{amsart}
\usepackage{amsmath,amssymb}
\newtheorem{theorem}{Theorem}[section]
\newtheorem{proposition}[theorem]{Proposition}
\newtheorem{corollary}[theorem]{Corollary}
\newtheorem{lemma}[theorem]{Lemma}

\theoremstyle{definition}

\begin{document}

\title[Geometric inequalities and the ABP technique]{Geometric inequalities and the Alexandrov-Bakelman-Pucci technique}
\author{Simon Brendle}
\address{Department of Mathematics \\ Columbia University \\ New York NY 10027}
\begin{abstract}
In this expository paper, we discuss a unified framework for proving various geometric inequalities, based on the so-called Alexandrov-Bakelman-Pucci technique. Examples include Cabr\'e's proof of the classical isoperimetric inequality in Euclidean space; the Fenchel-Willmore-Chen inequality for the mean curvature of a submanifold; the sharp version of the Michael-Simon Sobolev inequality for submanifolds; the sharp version of Ecker's logarithmic Sobolev inequality for submanifolds; and the Sobolev inequality for complete manifolds with nonnegative Ricci curvature and Euclidean volume growth. Finally, we discuss a connection to the work of Heintze and Karcher on the volume of a tubular neighborhood of a hypersurface in a manifold with nonnegative Ricci curvature.
\end{abstract}
\thanks{The author was supported by the National Science Foundation under grant DMS-2403981 and by the Simons Foundation.}

\maketitle 

\section{The classical isoperimetric inequality and the Sobolev inequality}

\label{Sobolev}

The isoperimetric inequality is one of the most important inequalities in differential geometry. 

\begin{theorem}[Isoperimetric inequality]
\label{isoperimetric.inequality}
Let $D$ be a compact domain in $\mathbb{R}^n$ with smooth boundary. Then $|\partial D| \geq n \, |B^n|^{\frac{1}{n}} \, |D|^{\frac{n-1}{n}}$, where $B^n$ denotes the open unit ball in $\mathbb{R}^n$.
\end{theorem}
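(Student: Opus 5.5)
We follow Cabr\'e's argument based on the Alexandrov-Bakelman-Pucci technique. By scaling we may normalize so that $|\partial D| = n\,|D|$; the claim then reduces to $|D| \geq |B^n|$. Indeed, once $|D| \geq |B^n|$ is known under this normalization, we get $|\partial D| = n|D| = n|D|^{\frac{1}{n}}|D|^{\frac{n-1}{n}} \geq n|B^n|^{\frac{1}{n}}|D|^{\frac{n-1}{n}}$. Both sides of the desired inequality scale like $\lambda^{n-1}$, so this normalization loses no generality. We may also assume $D$ is connected, since the inequality for each component implies it for the union by concavity of $t\mapsto t^{\frac{n-1}{n}}$.

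We let $u$ solve the Neumann problem $\Delta u = \frac{|\partial D|}{|D|} = n$ in $D$ with $\langle \nabla u,\nu\rangle = 1$ on $\partial D$, where $\nu$ is the outward unit normal. The compatibility condition $\int_D \Delta u = \int_{\partial D}\langle\nabla u,\nu\rangle$ holds exactly by our normalization, so a solution exists, and it is smooth up to the boundary by elliptic regularity. Next we define the lower contact set
\[
U = \{x \in D\setminus\partial D : |\nabla u(x)|<1\}, \qquad A = \{x\in U : D^2u(x)\geq 0\}.
\]

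\textbf{Key claim:} $\nabla u(A) \supset B^n$, i.e.\ every $\xi$ with $|\xi|<1$ is attained as $\nabla u(\bar x)$ for some $\bar x\in A$. To prove it, we fix such $\xi$ and minimize $u(x)-\langle\xi,x\rangle$ over the compact set $D$. The minimum cannot lie on $\partial D$, because there the outward normal derivative $\langle\nabla u,\nu\rangle - \langle \xi,\nu\rangle \geq 1-|\xi|>0$, so moving inward decreases the function. Hence the minimum occurs at an interior point $\bar x$, where $\nabla u(\bar x)=\xi$ (so $\bar x\in U$) and $D^2u(\bar x)\geq 0$ (so $\bar x \in A$). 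This boundary step is where the Neumann condition is used, and it is the heart of the argument.

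By the area formula we obtain
\[
|B^n| \leq \int_A |\det D^2u|\,dx = \int_A \det D^2 u \,dx.
\]
On $A$, the arithmetic-geometric mean inequality applied to the nonnegative eigenvalues of $D^2u$ gives $\det D^2u \leq \bigl(\frac{\Delta u}{n}\bigr)^n = 1$. Hence $|B^n|\leq |A|\leq |D|$, which is the reduced claim.

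The main obstacles are the boundary exclusion in the key claim, which is handled above, and the justification of the area formula inequality for a map $\nabla u$ that may be non-injective. The latter is standard because $\nabla u$ is $C^1$ and the counting function is at least $1$ on $B^n$. Regularity of $u$ up to $\partial D$ requires $C^{1}$ bounds for the first step, and these come from standard Neumann theory on smooth domains.
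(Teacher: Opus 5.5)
Your proof is correct and follows the paper's route. The paper proves the more general Sobolev inequality (Theorem~\ref{Sobolev.inequality}) by exactly this ABP scheme: the Neumann problem with $\langle \nabla u,\eta\rangle = 1$, the contact set $A$, minimizing $u(x)-\langle x,\xi\rangle$ to get $B^n \subset \nabla u(A)$, and the arithmetic--geometric mean bound on $\det D^2u$ over $A$. Specialized to constant $f$, that argument is yours; the only differences are that the paper normalizes by rescaling $f$ rather than $D$, and its ``apply to each component and sum'' step for disconnected domains relies on the same subadditivity of $t \mapsto t^{\frac{n-1}{n}}$ that you invoke.
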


The isoperimetric inequality is closely related to the Brunn-Minkowski inequality (see \cite{Lusternik}, \cite{Stein-Shakarchi2}) and to the Sobolev inequality.

\begin{theorem}[Sobolev inequality]
\label{Sobolev.inequality}
Let $D$ be a compact domain in $\mathbb{R}^n$ with smooth boundary. Let $f$ be a smooth positive function on $D$. Then 
\[\int_D |\nabla f| + \int_{\partial D} f \geq n \, |B^n|^{\frac{1}{n}} \, \Big ( \int_D f^{\frac{n}{n-1}} \Big )^{\frac{n-1}{n}},\] 
where $B^n$ denotes the open unit ball in $\mathbb{R}^n$.
\end{theorem}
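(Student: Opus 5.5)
We follow Cabré's ABP approach, adapted to the weighted setting. By scaling we may assume
\[\int_D |\nabla f| + \int_{\partial D} f = n \int_D f^{\frac{n}{n-1}}.\]
We then solve the Neumann problem
\[\mathrm{div}(f \nabla u) = n f^{\frac{n}{n-1}} - |\nabla f| \text{ in } D, \qquad \langle \nabla u, \eta \rangle = 1 \text{ on } \partial D,\]
where $\eta$ is the outward unit normal. The normalization is exactly the compatibility condition. Since $|\nabla f|$ need not be smooth, we would replace it by a smooth approximation $\sqrt{|\nabla f|^2+\varepsilon^2}$ and let $\varepsilon \to 0$ at the end. Standard elliptic theory then gives $u \in C^{2,\gamma}$.

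Next we define the sets
\[U = \{x \in D\setminus\partial D : |\nabla u(x)| < 1\}, \qquad A = \{x \in U : D^2u(x) \geq 0\},\]
and claim that $\nabla u(A) \supseteq B^n$. Given $\xi$ with $|\xi|<1$, the function $u(x) - \langle \xi, x\rangle$ attains its minimum on $D$ at some point $\bar x$. This point cannot lie on $\partial D$, because there the outward normal derivative equals $1 - \langle \xi,\eta\rangle > 0$, which contradicts minimality. So $\bar x$ is interior, $\nabla u(\bar x)=\xi$, and $D^2u(\bar x)\geq 0$, hence $\bar x \in A$.

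The key pointwise estimate on $A$ comes from expanding the equation:
\[f\Delta u = n f^{\frac{n}{n-1}} - |\nabla f| - \langle \nabla f, \nabla u\rangle \leq n f^{\frac{n}{n-1}},\]
where the inequality uses $|\nabla u|<1$. Combining this with the AM-GM inequality for the nonnegative eigenvalues of $D^2u$ gives
\[0 \leq \det D^2u \leq \Big(\frac{\Delta u}{n}\Big)^n \leq f^{\frac{n}{n-1}}.\]
The area formula then yields
\[|B^n| \leq |\nabla u(A)| \leq \int_A \det D^2 u \leq \int_D f^{\frac{n}{n-1}}.\]
Combining this with the normalization gives
\[\int_D|\nabla f|+\int_{\partial D} f = n\int_D f^{\frac{n}{n-1}} = n\Big(\int_D f^{\frac n{n-1}}\Big)^{\frac{n-1}{n}}\Big(\int_D f^{\frac n{n-1}}\Big)^{\frac1n} \geq n|B^n|^{\frac1n}\Big(\int_D f^{\frac n{n-1}}\Big)^{\frac{n-1}{n}},\]
which is the claimed inequality.

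The main obstacle is the regularity issue. The right-hand side involves $|\nabla f|$, which is only Lipschitz, and we need $u \in C^2$ up to the relevant points, in particular near the boundary, to justify the minimum argument and the area formula. We would handle this by the $\varepsilon$-smoothing described above, which gives $u \in C^{2,\gamma}(D)$ with smooth data. With the smoothed equation the same chain of estimates gives
\[|B^n| \leq \int_D f^{\frac{n}{n-1}} + O(\varepsilon),\]
and letting $\varepsilon\to 0$ recovers the inequality. The compatibility normalization must be adjusted consistently, i.e.\ rescaling $f$ for each $\varepsilon$. The remaining steps, namely the boundary exclusion and the area formula for the $C^1$ map $\nabla u$, are routine.
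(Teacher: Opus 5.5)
Your argument is the paper's proof. The normalization, the Neumann problem, the sets $U$ and $A$, the boundary-exclusion argument giving $B^n \subset \nabla u(A)$, the AM--GM bound $0 \leq \det D^2 u \leq f^{\frac{n}{n-1}}$ on $A$, and the area-formula conclusion all match.

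\textbf{Gap: disconnected $D$.} The normalization is the compatibility condition only when $D$ is connected. Integrating the equation over a single component $D_i$ shows that a solution exists only if
$\int_{D_i} |\nabla f| + \int_{\partial D_i} f = n \int_{D_i} f^{\frac{n}{n-1}}$
holds on each component separately. A single constant rescaling of $f$ cannot arrange this. The fix is to prove the inequality first for connected $D$. Then sum over the finitely many components, using $\sum_i a_i^{\frac{n-1}{n}} \geq (\sum_i a_i)^{\frac{n-1}{n}}$ with $a_i = \int_{D_i} f^{\frac{n}{n-1}}$. This is exactly what the paper does.

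\textbf{Regularity is not an obstacle.} Since $|\nabla f|$ is Lipschitz, it is $C^{0,\gamma}$ for every $\gamma < 1$. The operator $f \Delta u + \langle \nabla f, \nabla u \rangle$ is uniformly elliptic with smooth coefficients on a smooth domain. Schauder theory for its Neumann problem therefore already gives $u \in C^{2,\gamma}$ up to the boundary, and the paper uses this directly.

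Your $\varepsilon$-smoothing is harmless but superfluous. The pointwise bound on $U$ survives unchanged, because $\sqrt{|\nabla f|^2+\varepsilon^2} \geq |\nabla f| \geq -\langle \nabla f, \nabla u \rangle$ there. So $\varepsilon$ enters only through the normalization, not through the chain of estimates.
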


Many proofs of the isoperimetric inequality have been found. These employ a variety of techniques, including symmetrization techniques, variational methods, measure transportation (see \cite{Gromov}, \cite{McCann1}, \cite{McCann2}, \cite{McCann-Guillen}, \cite{Trudinger}), and the Alexandrov-Bakelman-Pucci technique (see \cite{Cabre1}, \cite{Cabre2}, \cite{Cabre3}). 

In the measure transportation approach, it is convenient to normalize the domain $D$ so that $|D| = |B^n|$. The key idea is to construct a measure-preserving map $\Phi$ from the interior of $D$ to the open unit ball $B^n$ with the property that the eigenvalues of the differential of $\Phi$ are nonnegative real numbers. A map $\Phi$ with these properties can be constructed in several ways. Gromov's work \cite{Gromov} uses the Knothe map, which has the additional property that the differential of $\Phi$ takes values in the space of upper triangular matrices. An alternative approach based on optimal transport was developed independently by McCann \cite{McCann1},\cite{McCann2} and Trudinger \cite{Trudinger}. In this approach, one defines $\Phi(x) = \nabla u(x)$, where $u$ is a convex function solving the second boundary value problem for the Monge-Amp\`ere equation. For this choice of $\Phi$, the differential of $\Phi$ takes values in the set of symmetric matrices. 

In \cite{Cabre2},\cite{Cabre3}, Cabr\'e gave an elegant proof of the isoperimetric inequality based on what has become known as the Alexandrov-Bakelman-Pucci technique. The Alexandrov-Bakelman-Pucci technique in Cabr\'e's work is, in a certain sense, dual to the optimal transport approach developed by McCann and Trudinger. In the optimal transport approach, one considers a convex function satisfying $\det D^2 u = 1$ and concludes that $\Delta u \geq n$ by the arithmetic-geometric mean inequality. In the Alexandrov-Bakelman-Pucci approach, one considers a solution of $\Delta u = n$ and concludes that $\det D^2 u \leq 1$ at each point where the Hessian of $u$ is weakly positive definite. The Alexandrov-Bakelman-Pucci technique only requires existence and regularity results for linear partial differential equations. 

In the remainder of this section, we describe a proof of Theorem \ref{Sobolev.inequality} following the arguments in Cabr\'e's work \cite{Cabre2},\cite{Cabre3}.

We first consider the special case that $D$ is connected. By multiplying $f$ by a suitable constant, we may arrange that 
\begin{equation} 
\label{Sobolev.normalization}
\int_D |\nabla f| + \int_{\partial D} f = n \int_D f^{\frac{n}{n-1}}. 
\end{equation}
Since $D$ is connected, we can find a function $u: D \to \mathbb{R}$ such that 
\begin{equation} 
\label{Sobolev.pde.for.u}
\text{\rm div}(f \, \nabla u) = n \, f^{\frac{n}{n-1}} - |\nabla f| 
\end{equation}
at each point in $D$ and 
\begin{equation} 
\label{Sobolev.boundary.condition.for.u}
\langle \nabla u,\eta \rangle = 1 
\end{equation}
at each point on $\partial D$, where $\eta$ denotes the outward-pointing unit normal vector field to $\partial D$. The function $u$ is of class $C^{2,\gamma}$ for each $0 < \gamma < 1$. We define 
\begin{align*} 
U &:= \{x \in D \setminus \partial D: |\nabla u(x)| < 1\}, \\ 
A &:= \{x \in U: D^2 u(x) \geq 0\}. 
\end{align*} 
Moreover, we define a map $\Phi: U \to \mathbb{R}^n$ by $\Phi(x) = \nabla u(x)$ for all $x \in U$. 

\begin{lemma}
\label{Sobolev.surjectivity} 
The open unit ball $B^n$ is contained in the image $\Phi(A)$.
\end{lemma}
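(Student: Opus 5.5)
Fix an arbitrary point $\xi \in \mathbb{R}^n$ with $|\xi| < 1$; we must find $\bar{x} \in A$ with $\nabla u(\bar{x}) = \xi$. Following Cabr\'e, we minimize the function $w(x) := u(x) - \langle \xi, x \rangle$ over the compact set $D$. Since $u$ is of class $C^{2,\gamma}$, $w$ is continuous on $D$ and attains its minimum at some point $\bar{x} \in D$.

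First, the minimum is not on the boundary. If $\bar{x} \in \partial D$, then moving from $\bar{x}$ into the interior along the inward normal $-\eta$ cannot decrease $w$ to first order. Hence $\langle \nabla w(\bar{x}), -\eta(\bar{x}) \rangle \geq 0$, that is, $\langle \nabla u(\bar{x}), \eta(\bar{x}) \rangle \leq \langle \xi, \eta(\bar{x}) \rangle$. By the boundary condition (\ref{Sobolev.boundary.condition.for.u}), the left-hand side equals $1$. The right-hand side is at most $|\xi| < 1$ by Cauchy--Schwarz, which is a contradiction. Thus $\bar{x} \in D \setminus \partial D$.

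Second, at an interior minimum the first and second order conditions hold. They give $\nabla w(\bar{x}) = 0$, i.e.\ $\nabla u(\bar{x}) = \xi$, so $|\nabla u(\bar{x})| = |\xi| < 1$ and $\bar{x} \in U$. They also give $D^2 w(\bar{x}) = D^2 u(\bar{x}) \geq 0$, so $\bar{x} \in A$. Then $\Phi(\bar{x}) = \nabla u(\bar{x}) = \xi$, and since $\xi$ was arbitrary, $B^n \subset \Phi(A)$.

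The only delicate step is excluding boundary minima. This relies on the strict inequality $|\xi| < 1$ together with the Neumann condition $\langle \nabla u, \eta \rangle = 1$, and on $u$ being $C^1$ up to the boundary so that the one-sided directional derivative argument is valid. The regularity $u \in C^{2,\gamma}(D)$ stated before the lemma supplies this. Note that the PDE (\ref{Sobolev.pde.for.u}) itself is not needed for this lemma; it will enter only when bounding $\det D^2 u$ on $A$.
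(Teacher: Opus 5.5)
Your proposal is correct and follows essentially the same route as the paper: minimize $w(x)=u(x)-\langle x,\xi\rangle$ over $D$, use the Neumann condition $\langle\nabla u,\eta\rangle=1$ together with $|\xi|<1$ (so that $\langle \nabla w,\eta\rangle = 1-\langle\eta,\xi\rangle>0$) to rule out a boundary minimum, and then read off $\nabla u(\bar x)=\xi$ and $D^2u(\bar x)\ge 0$ at the interior minimum. You also explicitly note that $\bar x\in U$ because $|\nabla u(\bar x)|=|\xi|<1$, which the paper leaves implicit.
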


\textbf{Proof.} 
Let us fix an arbitrary vector $\xi \in \mathbb{R}^n$ such that $|\xi|<1$. We define a function $w: D \to \mathbb{R}$ by $w(x) := u(x) - \langle x,\xi \rangle$. Using (\ref{Sobolev.boundary.condition.for.u}), we obtain 
\[\langle \nabla w(x),\eta(x) \rangle = \langle \nabla u(x),\eta(x) \rangle - \langle \eta(x),\xi \rangle = 1 - \langle \eta(x),\xi \rangle > 0\] 
for each point $x \in \partial D$. Consequently, we can find a point $\bar{x} \in D \setminus \partial D$ where the function $w$ attains its minimum. Clearly, $\nabla w(\bar{x}) = 0$ and $D^2 w(\bar{x}) \geq 0$. From this, we deduce that $\xi = \nabla u(\bar{x})$ and $D^2 u(\bar{x}) \geq 0$. This implies $\bar{x} \in A$ and $\Phi(\bar{x}) = \xi$. Thus, $B^n \subset \Phi(A)$. This completes the proof of Lemma \ref{Sobolev.surjectivity}. \\

\begin{lemma}
\label{Sobolev.Jacobian.estimate}
The Jacobian determinant of $\Phi$ satisfies 
\[0 \leq \det D\Phi(x) \leq f(x)^{\frac{n}{n-1}}\] 
for each point $x \in A$.
\end{lemma}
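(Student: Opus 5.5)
Since $\Phi = \nabla u$, we have $D\Phi(x) = D^2 u(x)$. At a point $x \in A$ this symmetric matrix is positive semidefinite, so its eigenvalues $\lambda_1,\dots,\lambda_n$ are nonnegative. The lower bound $\det D\Phi(x) \geq 0$ follows at once. For the upper bound I will apply the arithmetic-geometric mean inequality to these eigenvalues:
\[\det D^2 u(x) = \prod_{i=1}^n \lambda_i \leq \Big( \frac{1}{n} \, \Delta u(x) \Big)^n.\]
This is legitimate because $\Delta u(x) = \sum_i \lambda_i \geq 0$ on $A$. It therefore suffices to show that $\Delta u(x) \leq n \, f(x)^{\frac{1}{n-1}}$ for each $x \in A$; raising this to the $n$-th power and dividing by $n^n$ gives $\det D\Phi(x) \leq f(x)^{\frac{n}{n-1}}$.

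To bound $\Delta u$, I will expand the equation (\ref{Sobolev.pde.for.u}):
\[f \, \Delta u + \langle \nabla f, \nabla u \rangle = n \, f^{\frac{n}{n-1}} - |\nabla f|.\]
Solving for $\Delta u$ gives
\[f \, \Delta u = n \, f^{\frac{n}{n-1}} - |\nabla f| - \langle \nabla f, \nabla u \rangle.\]
Since $x \in A \subset U$, we have $|\nabla u(x)| < 1$. By Cauchy-Schwarz, $-\langle \nabla f, \nabla u \rangle \leq |\nabla f| \, |\nabla u| \leq |\nabla f|$, so the last two terms on the right are together nonpositive. Hence $f \, \Delta u \leq n \, f^{\frac{n}{n-1}}$. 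Dividing by $f > 0$ yields $\Delta u \leq n \, f^{\frac{1}{n-1}}$, which is the required bound.

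The argument is short, and the only point needing care is the choice of the right-hand side of (\ref{Sobolev.pde.for.u}). The term $-|\nabla f|$ there is exactly what absorbs the cross term $\langle \nabla f, \nabla u \rangle$, and this works only because the restriction to $U$ guarantees $|\nabla u| < 1$. The nonnegativity of the Hessian on $A$ plays two roles: it justifies the arithmetic-geometric mean inequality and it gives the lower bound. The regularity $u \in C^{2,\gamma}$ ensures that all of these pointwise computations make sense.
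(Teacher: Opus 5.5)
Your proof is correct and follows the paper's argument essentially verbatim: expand the equation $\text{\rm div}(f \, \nabla u) = n \, f^{\frac{n}{n-1}} - |\nabla f|$, use $|\nabla u(x)| < 1$ to absorb the cross term $\langle \nabla f, \nabla u \rangle$ and obtain $\Delta u(x) \leq n \, f(x)^{\frac{1}{n-1}}$, then apply the arithmetic-geometric mean inequality to the nonnegative eigenvalues of $D^2 u(x)$. Your remark that $\Delta u(x) \geq 0$ on $A$ justifies raising the bound to the $n$-th power, a step the paper leaves implicit.
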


\textbf{Proof.} 
Let us fix a point $x \in A$. Using (\ref{Sobolev.pde.for.u}), we obtain 
\begin{align*} 
\Delta u(x) 
&= n \, f(x)^{\frac{1}{n-1}} - f(x)^{-1} \, |\nabla f(x)| - f(x)^{-1} \, \langle \nabla f(x),\nabla u(x) \rangle \\ 
&\leq n \, f(x)^{\frac{1}{n-1}}. 
\end{align*}
Moreover, $D^2 u(x) \geq 0$ since $x \in A$. Hence, the arithmetic-geometric mean inequality implies 
\[0 \leq \det D^2 u(x) \leq \Big ( \frac{\Delta u(x)}{n} \Big )^n \leq f(x)^{\frac{n}{n-1}}.\] 
This completes the proof of Lemma \ref{Sobolev.Jacobian.estimate}. \\

Using Lemma \ref{Sobolev.surjectivity} and Lemma \ref{Sobolev.Jacobian.estimate}, we obtain 
\[|B^n| \leq \int_A |\det D\Phi(x)| \, dx \leq \int_D f(x)^{\frac{n}{n-1}} \, dx.\] 
Combining this inequality with the normalization condition (\ref{Sobolev.normalization}), we conclude that 
\[\int_D |\nabla f| + \int_{\partial D} f = n \int_D f^{\frac{n}{n-1}} \geq n \, |B^n|^{\frac{1}{n}} \, \Big ( \int_D f^{\frac{n}{n-1}} \Big )^{\frac{n-1}{n}}.\] 
This completes the proof of Theorem \ref{Sobolev.inequality} in the special case when $D$ is connected.

Finally, if $D$ is disconnected, we apply the inequality to each connected component of $D$, and take the sum over all connected components. \\

\section{The Fenchel-Willmore-Chen inequality for submanifolds of Euclidean space} 

\label{Fenchel/Willmore.Chen.estimate}

In this section, we discuss how the well-known Fenchel-Willmore-Chen inequality fits within the framework of the Alexandrov-Bakelman-Pucci technique.

\begin{theorem}[Fenchel-Willmore-Chen inequality] 
\label{FWC.estimate}
Let $\Sigma$ be a compact $n$-dimensional submanifold of $\mathbb{R}^{n+m}$ without boundary. Then 
\[\int_\Sigma \Big ( \frac{|H|}{n} \Big )^n \geq |S^n|,\] 
where $H$ denotes the mean curvature vector of $\Sigma$.
\end{theorem}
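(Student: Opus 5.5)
We will prove Theorem \ref{FWC.estimate} with the normal-bundle version of the argument from Section \ref{Sobolev}. Since $\Sigma$ is closed, no PDE has to be solved: the role of $u$ is played by the distance-type function $x \mapsto \frac{1}{2}|x - y|^2$ for $y$ in the ambient space. Let $T^\perp \Sigma$ be the normal bundle. Define the map $\Phi: T^\perp \Sigma \to \mathbb{R}^{n+m}$ by $\Phi(x,y) = x + y$ for $x \in \Sigma$ and $y \in T_x^\perp \Sigma$. Let $A$ be the set of pairs $(x,y)$ such that $D^2_\Sigma\big(\frac12|\cdot - (x+y)|^2\big)(x) \geq 0$. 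This condition is equivalent to
\[g - \langle I\!I(\cdot,\cdot), y \rangle \geq 0 \quad \text{at } x.\]
We will work in the rescaled regime $|y| \to \infty$ and compare volumes of large balls.

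\textbf{Step 1 (surjectivity, analogue of Lemma \ref{Sobolev.surjectivity}).} Fix $p \in \mathbb{R}^{n+m}$ and minimize $x \mapsto \frac12 |x - p|^2$ over the compact manifold $\Sigma$. At a minimum point $\bar x$:
\begin{itemize}
\item the first-order condition gives $p - \bar x \in T^\perp_{\bar x}\Sigma$;
\item the second-order condition gives $g - \langle I\!I, p - \bar x\rangle \geq 0$.
\end{itemize}
Hence every point of $\mathbb{R}^{n+m}$ lies in $\Phi(A)$. In particular, for every $r > 0$,
\[ |B^{n+m}(r)| \leq \int_{\Sigma} \int_{\{y \in T^\perp_x\Sigma:\, |y|<r,\ (x,y)\in A\}} |\det D\Phi(x,y)| \, dy \, dx. \]

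\textbf{Step 2 (Jacobian estimate, analogue of Lemma \ref{Sobolev.Jacobian.estimate}).} A direct computation in an orthonormal frame gives
\[|\det D\Phi(x,y)| = \det\big(g - \langle I\!I(x), y\rangle\big).\]
On $A$ this matrix is nonnegative, so the arithmetic-geometric mean inequality yields
\[ 0 \leq \det D\Phi \leq \Big(1 - \frac{\langle H, y\rangle}{n}\Big)^n \leq \Big(1 + \frac{|H||y|}{n}\Big)^n. \]
This crude bound loses too much, because it keeps all directions $y$. The main obstacle is therefore to exploit the positivity condition more sharply.

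\textbf{Step 3 (the sharp estimate).} I would proceed as follows.
\begin{itemize}
\item First use that $A$ forces $\langle H, y\rangle \leq n$. Writing $y = r\omega$ with $\omega$ in the unit sphere of $T_x^\perp\Sigma$, for large $r$ only the directions with $\langle H, \omega\rangle \leq 0$ (up to $O(1/r)$) contribute.
\item For those directions, bound $\det D\Phi \leq (1 - r\langle H,\omega\rangle/n)^n \leq (1 + r|H|/n)^n$ by the arithmetic-geometric mean inequality.
\item Integrate in polar coordinates in the $m$-dimensional fibre:
\[ |B^{n+m}(r)| \leq \int_\Sigma \int_0^r \int_{S^{m-1}} \Big(1 + \frac{s\,\langle H,\omega\rangle_-}{n}\Big)^n \, s^{m-1} \, d\omega \, ds \, dx. \]
\item Divide by $r^{n+m}$ and let $r \to \infty$. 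This gives
\[ |B^{n+m}| \leq \frac{1}{n+m} \int_\Sigma \Big(\frac{|H|}{n}\Big)^n \int_{S^{m-1}} \langle e,\omega\rangle_-^n \, d\omega, \]
where $e$ is any fixed unit vector.
\end{itemize}

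\textbf{Step 4 (the constant).} It remains to check the identity
\[\frac{1}{n+m}\int_{S^{m-1}}\langle e,\omega\rangle_-^n\, d\omega = \frac{|B^{n+m}|}{|S^n|}.\]
The integral equals $\frac12\int_{S^{m-1}}|\omega_1|^n\,d\omega$, which one evaluates with Gaussian or Beta-function integrals. Granting this identity, the inequality of Step 3 rearranges exactly to
\[\int_\Sigma \Big(\frac{|H|}{n}\Big)^n \geq |S^n|.\]
When $m = 1$ the argument is simpler: the fibre sphere is just two points, and the constant identity reduces to $|B^{n+1}|(n+1) = |S^n|$.
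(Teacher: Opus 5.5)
Your proof is correct, but it takes a different route from the paper's proof of this theorem.

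\textbf{The paper's route.} The paper works on the unit normal disc bundle with the map $\Phi(x,y)=y$. It obtains surjectivity onto $B^{n+m}$ by minimizing the linear height function $-\langle x,\xi\rangle$ (Lemma \ref{FWC.surjectivity}). The Jacobian is $\det(-\langle I\!I(x),y\rangle)$, which is homogeneous of degree $n$ in $y$. The arithmetic-geometric mean inequality and Lemma \ref{integral} then finish the proof with no limiting procedure.

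\textbf{Your route.} You use the normal exponential map $x+y$, nearest-point projection, the Jacobian $\det(g-\langle I\!I(x),y\rangle)$, and a blow-down $r\to\infty$. This is the Heintze--Karcher tube argument that the paper uses in its last section for Theorem \ref{Riemannian.FWC.estimate}. Here it is specialized to $\mathbb{R}^{n+m}$ (so $\theta=1$), but in arbitrary codimension.

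\textbf{How they relate.} The two proofs are connected by scaling. Write $y=r\tilde y$ and divide by $r$; then your map, your set $A$ and your Jacobian bound converge to the paper's.

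\textbf{What each buys.} The paper's version is shorter and needs no limit. Yours yields more: the tube-volume estimate
\[\big|\{p:d(p,\Sigma)<r\}\big|\le\int_\Sigma\int_{\{y\in T_x^\perp\Sigma:|y|<r\}}\Big(1-\frac{\langle H,y\rangle}{n}\Big)_+^n\,dy\,d\mathrm{vol}(x)\]
for every finite $r$. It is also the form that carries over to curved ambient spaces, where no linear height function is available.

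\textbf{Minor points.}
\begin{enumerate}
\item In Step 1, the ball of radius $r$ should be centered at a point of $\Sigma$. Then the nearest-point preimage $(\bar x,p-\bar x)$ of each $p$ in it satisfies $|y|=d(p,\Sigma)<r$.
\item In Step 3, the ``up to $O(1/r)$'' heuristic is not needed. On $A$ one has $0\le 1-s\langle H,\omega\rangle/n$, and this quantity is $\le 1$ when $\langle H,\omega\rangle>0$. Hence $1_A\det D\Phi\le(1+s\langle H,\omega\rangle_-/n)^n$ holds pointwise in every direction, which is exactly what your polar-coordinate integral uses.
\item The constant identity you grant in Step 4 is precisely Lemma \ref{integral}, after polar coordinates in the fibre.
\end{enumerate}
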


Theorem \ref{FWC.estimate} was proved by Chen \cite{Chen}, building upon earlier work of Fenchel \cite{Fenchel} and Willmore \cite{Willmore}.

In the remainder of this section, we describe the proof of Theorem \ref{FWC.estimate}. We define 
\begin{align*} 
U &:= \{(x,y): x \in \Sigma, \, y \in T_x^\perp \Sigma, \, |y| < 1\}, \\ 
A &:= \{(x,y) \in U: -\langle I\!I(x),y \rangle \geq 0\}. 
\end{align*} 
Moreover, we define a map $\Phi: U \to \mathbb{R}^{n+m}$ by $\Phi(x,y) = y$ for all $(x,y) \in U$. 

\begin{lemma}
\label{FWC.surjectivity} 
The open unit ball $B^{n+m}$ is contained in the image $\Phi(A)$.
\end{lemma}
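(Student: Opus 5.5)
Fix an arbitrary vector $\xi \in \mathbb{R}^{n+m}$ with $|\xi| < 1$. Following the proof of Lemma \ref{Sobolev.surjectivity}, we look at the height function $w: \Sigma \to \mathbb{R}$ defined by $w(x) := -\langle x,\xi \rangle$, or equivalently $\langle x,\xi\rangle$ with the sign fixed to match the convention $-\langle I\!I(x),y\rangle \geq 0$. Since $\Sigma$ is compact and has no boundary, $w$ attains its minimum at some point $\bar{x} \in \Sigma$. In this setting there is no boundary term to control. The Neumann-type condition from Section \ref{Sobolev} is therefore replaced by the absence of boundary, and the role of $\nabla u$ is played by the normal vector $y$ itself, which is why $u$ never needs to appear.

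At the minimum point $\bar{x}$, the first-order condition says that the tangential gradient of $w$ vanishes. The tangential gradient of $x \mapsto \langle x,\xi\rangle$ is the tangential projection $\xi^{\top}$, so $\xi^\top = 0$ at $\bar{x}$. Hence $\xi \in T_{\bar{x}}^\perp \Sigma$, and since $|\xi|<1$ the pair $(\bar{x},\xi)$ lies in $U$ with $\Phi(\bar{x},\xi) = \xi$.

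Next we apply the second-order condition. For tangent vectors $X,Y$ at $\bar{x}$, the Hessian of the restriction of the linear function $\langle x,\xi\rangle$ to $\Sigma$ is $D^2_\Sigma \langle x,\xi\rangle(X,Y) = \langle I\!I(X,Y),\xi\rangle$. This follows from differentiating $\langle X,\xi\rangle$ along $\Sigma$ and using the Gauss formula $\bar{D}_X Y = \nabla^\Sigma_X Y + I\!I(X,Y)$. At a minimum of $w = -\langle x,\xi\rangle$ we have $D^2_\Sigma w(\bar{x}) \geq 0$, that is, $-\langle I\!I(\bar{x}),\xi\rangle \geq 0$. This places $(\bar{x},\xi)$ in $A$, so $\xi \in \Phi(A)$. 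Because $\xi$ was arbitrary, $B^{n+m} \subset \Phi(A)$.

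The only step that needs care is the sign convention: we must choose between minimizing and maximizing $\langle x,\xi\rangle$ so that the resulting Hessian inequality matches the sign $-\langle I\!I(x),y\rangle \geq 0$ in the definition of $A$. The Hessian identity fixes this. Minimizing $-\langle x,\xi\rangle$, which is the same as maximizing $\langle x,\xi\rangle$, gives $-\langle I\!I,\xi\rangle \geq 0$. So we take $\bar{x}$ to be a point where $\langle x,\xi\rangle$ attains its maximum. The argument then works even for $\xi = 0$, because any point of $\Sigma$ is a maximum of the zero function and the condition defining $A$ holds trivially.
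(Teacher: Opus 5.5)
Your proposal is correct and matches the paper's proof: you minimize $w(x) = -\langle x,\xi\rangle$ over the closed submanifold $\Sigma$, use the first-order condition to get $\xi \in T_{\bar{x}}^\perp \Sigma$, and use the second-order condition, via the Gauss formula identity $D^2_\Sigma w = -\langle I\!I,\xi\rangle$, to conclude $(\bar{x},\xi) \in A$ with $\Phi(\bar{x},\xi) = \xi$. The hedging in your first paragraph about which sign to take is unnecessary, since your later Hessian computation fixes the sign exactly as the paper does.
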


\textbf{Proof.} 
Let us fix an arbitrary vector $\xi \in \mathbb{R}^{n+m}$ such that $|\xi|<1$. We define a function $w: \Sigma \to \mathbb{R}$ by $w(x) := -\langle x,\xi \rangle$. We can find a point $\bar{x} \in \Sigma$ where the function $w$ attains its minimum. Clearly, $\nabla^\Sigma w(\bar{x}) = 0$ and $D_\Sigma^2 w(\bar{x}) \geq 0$. From this, we deduce that $\xi$ is orthogonal to the tangent space $T_{\bar{x}} \Sigma$ and $-\langle I\!I(\bar{x}),\xi \rangle \geq 0$. Therefore, if we define a normal vector $\bar{y} \in T_{\bar{x}}^\perp \Sigma$ by $\bar{y} = \xi$, then $(\bar{x},\bar{y}) \in A$ and $\Phi(\bar{x},\bar{y}) = \xi$. Thus, $B^{n+m} \subset \Phi(A)$. This completes the proof of Lemma \ref{FWC.surjectivity}. \\

\begin{lemma}
\label{FWC.Jacobian.determinant}
The Jacobian determinant of $\Phi$ is given by 
\[\det D\Phi(x,y) = \det (-\langle I\!I(x),y \rangle)\] 
for each point $(x,y) \in U$.
\end{lemma}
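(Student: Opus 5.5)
We will compute the differential of $\Phi$ in local coordinates adapted to the normal bundle, and observe that it is block triangular. The diagonal blocks will be the identity on the normal directions and $-\langle I\!I(x),y\rangle$ on the tangent directions.

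\emph{Coordinates.} Fix a point $(\bar{x},\bar{y}) \in U$. Choose local coordinates $(x_1,\dots,x_n)$ on $\Sigma$ near $\bar{x}$ such that $\{\partial_i\}$ is orthonormal at $\bar{x}$. Choose a local orthonormal frame $\{\nu_1,\dots,\nu_m\}$ of the normal bundle near $\bar{x}$. This gives local coordinates $(x,t) \mapsto (x,\sum_\alpha t_\alpha \nu_\alpha(x))$ on the total space of the normal bundle. The natural volume form on $U$ is the product of the Riemannian measure on $\Sigma$ and Lebesgue measure on the fibers. With respect to these coordinates it is represented by an orthonormal frame at $(\bar{x},\bar{y})$, so the Jacobian determinant is the determinant of the matrix of $D\Phi$ from the frame $\{\partial_i,\partial_{t_\alpha}\}$ to an orthonormal basis of $\mathbb{R}^{n+m}$ adapted to $T_{\bar{x}}\Sigma \oplus T_{\bar{x}}^\perp\Sigma$.

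\emph{Derivatives.} Write $\Phi(x,t) = \sum_\alpha t_\alpha \nu_\alpha(x)$. Then $\partial_{t_\alpha}\Phi = \nu_\alpha$, which lies in the normal space. Next, $\partial_i \Phi = \sum_\alpha t_\alpha\, D_{\partial_i}\nu_\alpha$. The tangential component of $D_{\partial_i}\nu_\alpha$ is given by the Weingarten equation:
\[\langle D_{\partial_i}\nu_\alpha,\partial_j\rangle = -\langle I\!I(\partial_i,\partial_j),\nu_\alpha\rangle.\]
Hence
\[\langle \partial_i\Phi,\partial_j\rangle = -\langle I\!I(\partial_i,\partial_j),y\rangle.\]
The normal component of $\partial_i\Phi$ involves the normal connection and is some unspecified quantity.

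\emph{Conclusion.} In the basis $\{\partial_i\}\cup\{\partial_{t_\alpha}\}$ for the domain and $\{\partial_j(\bar{x})\}\cup\{\nu_\beta(\bar{x})\}$ for $\mathbb{R}^{n+m}$, the matrix of $D\Phi$ at $(\bar{x},\bar{y})$ is block triangular:
\[\begin{pmatrix} -\langle I\!I(\partial_i,\partial_j),\bar{y}\rangle & 0 \\ * & \delta_{\alpha\beta}\end{pmatrix}.\]
Its determinant is therefore $\det(-\langle I\!I(\bar{x}),\bar{y}\rangle)$. The normal-connection terms appear only in the off-diagonal block $*$, so they do not affect the determinant.

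The only point requiring care is confirming that these coordinates are orthonormal for the product measure on $U$ at $(\bar{x},\bar{y})$, so that the matrix determinant really is the Jacobian with respect to the natural volume. This holds because the fiber coordinates $t_\alpha$ are orthonormal coordinates on each fiber, and the $\partial_i$ are orthonormal at $\bar{x}$.
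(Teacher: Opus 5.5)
Your proof is correct and is essentially the paper's argument: both compute $D\Phi$ at $(\bar{x},\bar{y})$ in coordinates $(x_1,\dots,x_n,y_1,\dots,y_m)$ coming from a chart that is orthonormal at $\bar{x}$ and an orthonormal normal frame, and both use the Weingarten equation for the tangential part. The only difference is that the paper chooses the frame so that $\partial_i \nu_\alpha$ is tangential at $\bar{x}$, which makes $D\Phi$ block diagonal, whereas you allow an arbitrary orthonormal normal frame and let the normal-connection terms sit in an off-diagonal block that does not affect the determinant, just as the paper later does for the Michael--Simon map.
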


\textbf{Proof.} 
Let us fix a point $(\bar{x},\bar{y}) \in U$. Let $\{e_1,\hdots,e_n\}$ be an orthonormal basis of the tangent space $T_{\bar{x}} \Sigma$, and let $(x_1,\hdots,x_n)$ be a local coordinate system on $\Sigma$ such that $\frac{\partial}{\partial x_i} = e_i$ at the point $\bar{x}$. Moreover, let $\{\nu_1,\hdots,\nu_m\}$ denote a local orthonormal frame for the normal bundle $T^\perp \Sigma$ such that $\frac{\partial}{\partial x_i} \nu_\alpha \in T_{\bar{x}} \Sigma$ at the point $\bar{x}$. Every normal vector $y$ can be written in the form $y = \sum_{\alpha=1}^m y_\alpha \nu_\alpha$. With this understood, $(x_1,\hdots,x_n,y_1,\hdots,y_m)$ is a local coordinate system on the total space of the normal bundle $T^\perp \Sigma$. A straightforward calculation gives 
\[\frac{\partial \Phi}{\partial x_i} = -\sum_{j=1}^n \langle I\!I(e_i,e_j),\bar{y} \rangle \, e_j\] 
and 
\[\frac{\partial \Phi}{\partial y_\alpha} = \nu_\alpha\] 
at the point $(\bar{x},\bar{y})$. Therefore, the differential of $\Phi$ at the point $(\bar{x},\bar{y})$ is given by 
\[D\Phi(\bar{x},\bar{y}) = \begin{bmatrix} -\langle I\!I(\bar{x}),\bar{y} \rangle & 0 \\ 0 & \text{\rm id} \end{bmatrix}.\] 
The assertion follows by taking the determinant. This completes the proof of Lemma \ref{FWC.Jacobian.determinant}. \\

\begin{lemma}
\label{FWC.Jacobian.estimate}
The Jacobian determinant of $\Phi$ satisfies 
\[0 \leq \det D\Phi(x,y) \leq \Big ( -\frac{\langle H(x),y \rangle}{n} \Big )^n\] 
for each point $(x,y) \in A$.
\end{lemma}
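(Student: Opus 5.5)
The plan is to combine Lemma \ref{FWC.Jacobian.determinant} with the arithmetic-geometric mean inequality, exactly as in the proof of Lemma \ref{Sobolev.Jacobian.estimate}. Fix a point $(x,y) \in A$. By Lemma \ref{FWC.Jacobian.determinant}, we have $\det D\Phi(x,y) = \det(-\langle I\!I(x),y \rangle)$, where $-\langle I\!I(x),y \rangle$ is regarded as a symmetric bilinear form on $T_x \Sigma$. With respect to an orthonormal basis $\{e_1,\hdots,e_n\}$ of $T_x \Sigma$, it is represented by the symmetric $n \times n$ matrix with entries $-\langle I\!I(e_i,e_j),y \rangle$. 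The symmetry of this matrix comes from the symmetry of the second fundamental form.

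Next I take the trace of this matrix. Since the mean curvature vector is $H = \sum_{i=1}^n I\!I(e_i,e_i)$, the trace equals $-\langle H(x),y \rangle$. By the definition of $A$, the matrix $-\langle I\!I(x),y \rangle$ is positive semidefinite, so its eigenvalues $\lambda_1,\hdots,\lambda_n$ are nonnegative real numbers. The arithmetic-geometric mean inequality then gives
\[0 \leq \prod_{i=1}^n \lambda_i \leq \Big ( \frac{1}{n} \sum_{i=1}^n \lambda_i \Big )^n = \Big ( -\frac{\langle H(x),y \rangle}{n} \Big )^n.\]
The left-hand product is exactly $\det D\Phi(x,y)$, which yields the claim.

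There is no real obstacle here. The only points needing care are the following. \begin{enumerate} \item The sign convention relating $H$ to $I\!I$ must be the trace convention $H = \operatorname{tr} I\!I$, which is consistent with the normalization $|H|/n$ in Theorem \ref{FWC.estimate}. \item The nonnegativity of $-\langle H(x),y \rangle$ follows automatically, as the trace of a positive semidefinite matrix, so raising it to the $n$-th power is harmless. \end{enumerate}
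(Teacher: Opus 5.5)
Your proposal is correct and matches the paper's proof: for $(x,y) \in A$ the symmetric form $-\langle I\!I(x),y \rangle$ is positive semidefinite with trace $-\langle H(x),y \rangle$, so the arithmetic-geometric mean inequality bounds its determinant, and Lemma \ref{FWC.Jacobian.determinant} identifies that determinant with $\det D\Phi(x,y)$. Your remarks on the trace convention $H = \operatorname{tr} I\!I$ and on the nonnegativity of $-\langle H(x),y \rangle$ are accurate.
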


\textbf{Proof.} 
Let us fix a point $(x,y) \in A$. Then $-\langle I\!I(x),y \rangle \geq 0$. Hence, the arithmetic-geometric mean inequality implies
\[0 \leq \det (-\langle I\!I(x),y \rangle) \leq \Big ( -\frac{\langle H(x),y \rangle}{n} \Big )^n.\] 
The assertion follows now from Lemma \ref{FWC.Jacobian.determinant}. This completes the proof of Lemma \ref{FWC.Jacobian.estimate}. \\

\begin{lemma} 
\label{integral}
We have 
\[\int_{\{y \in \mathbb{R}^m: |y| < 1\}} (-\langle a,y \rangle)_+^n \, dy = \frac{|B^{n+m}|}{|S^n|} \, |a|^n\] 
for every vector $a \in \mathbb{R}^m$. 
\end{lemma}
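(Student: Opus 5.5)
The plan is to compute the integral in a way that never requires explicit Gamma-function identities. Concretely, I will evaluate a single auxiliary integral over $B^{n+m}$ in two different ways and compare the results.

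First, two reductions. For $a=0$ both sides vanish; note that $n\geq 1$, so $0^n = 0$. For $a\neq 0$, write $a=|a|e$ with $|e|=1$. The integral is invariant under rotations of $\mathbb{R}^m$, and the integrand is homogeneous of degree $n$ in $a$. So it suffices to show that
\[c_m := \int_{\{y\in\mathbb{R}^m:|y|<1\}} (-y_1)_+^n\,dy = \frac{|B^{n+m}|}{|S^n|}.\]

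To compute $c_m$, I will split $\mathbb{R}^{n+m}=\mathbb{R}^{n+1}\times\mathbb{R}^{m-1}$ and compute $|B^{n+m}|$ by slicing. Write a point as $(z,w)$ with $z\in\mathbb{R}^{n+1}$ and $w\in\mathbb{R}^{m-1}$. Then
\[|B^{n+m}| = \int_{\{w\in\mathbb{R}^{m-1}:|w|<1\}} |B^{n+1}|\,(1-|w|^2)^{\frac{n+1}{2}}\,dw.\]
On the other side, I handle $c_m$ by the analogous slicing $y=(y_1,w)$:
\[c_m=\int_{\{|w|<1\}}\int_{-r}^{0}(-t)^n\,dt\,dw = \int_{\{|w|<1\}} \frac{r^{n+1}}{n+1}\,dw, \qquad r=\sqrt{1-|w|^2}.\]
When $m=1$ there is no $w$, and this simply reads $c_1=\frac{1}{n+1}$.

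Comparing the two expressions gives
\[c_m=\frac{|B^{n+m}|}{(n+1)|B^{n+1}|}.\]
Since $|S^n|=(n+1)|B^{n+1}|$, this is exactly $\frac{|B^{n+m}|}{|S^n|}$, as required. The case $m=1$ is consistent with this: $|B^{n+1}|/|S^n|=\frac{1}{n+1}$.

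The main obstacle is only bookkeeping. I need the slicing formula for $|B^{n+m}|$ to be set up with the correct radius exponent $\frac{n+1}{2}$, and I need the identity $|S^n|=(n+1)|B^{n+1}|$, which follows from polar coordinates. Beyond that, the two integrals over $w$ match term by term, so no explicit evaluation of the $w$-integral is ever needed.
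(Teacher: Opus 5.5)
Your proof is correct, and it takes a genuinely different route from the paper's.

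Both arguments start with the same reduction to a unit vector. The paper then applies Fubini in the other order. It slices $\{|y|<1\}$ by the hyperplanes $\langle a,y\rangle=-s$, which gives $|B^{m-1}|\int_0^1 s^n(1-s^2)^{\frac{m-1}{2}}\,ds$. It substitutes $t=s^2$ to get a Beta integral, and then matches constants using the Beta--Gamma relation together with the explicit Gamma-function formulas for $|S^n|$, $|B^{m-1}|$ and $|B^{n+m}|$.

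You instead integrate first along the direction of $a$, which gives $\frac{1}{n+1}(1-|w|^2)^{\frac{n+1}{2}}$. You then identify $\int_{\{|w|<1\}}(1-|w|^2)^{\frac{n+1}{2}}\,dw$ with $|B^{n+m}|/|B^{n+1}|$ by fibering $B^{n+m}$ over $B^{m-1}$ with $(n+1)$-dimensional ball fibers. The only external input is $|S^n|=(n+1)|B^{n+1}|$.

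Your version is shorter and avoids special functions entirely. It also explains why the constant $|B^{n+m}|/|S^n|$ appears. The expression $|S^n|\,(-y_1)_+^n\,dy_1$ is exactly the polar-coordinate volume element of $\mathbb{R}^{n+1}$ in the radial variable $-y_1$. So $|S^n|$ times the integral is just $|B^{n+m}|$, computed in coordinates adapted to $\mathbb{R}^{n+1}\times\mathbb{R}^{m-1}$.

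The paper's computation is more mechanical, but as a byproduct it records the constant in closed Gamma-function form. Your separate check of the case $m=1$ is worthwhile: that case reappears, with $n-1$ in place of $n$, in the final section of the paper, where the normal space is one-dimensional.
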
 

\textbf{Proof.} 
It suffices to prove the assertion in the special case when $a \in \mathbb{R}^m$ is a unit vector. If $a \in \mathbb{R}^m$ is a unit vector, we obtain 
\begin{align*} 
\int_{\{y \in \mathbb{R}^m: |y| < 1\}} (-\langle a,y \rangle)_+^n \, dy 
&= |B^{m-1}| \int_0^1 s^n \, (1-s^2)^{\frac{m-1}{2}} \, ds \\ 
&= \frac{1}{2} \, |B^{m-1}| \int_0^1 t^{\frac{n-1}{2}} \, (1-t)^{\frac{m-1}{2}} \, dt \\ 
&= \frac{\Gamma(\frac{n+1}{2}) \, \Gamma(\frac{m+1}{2})}{2 \, \Gamma(\frac{n+m+2}{2})} \, |B^{m-1}|. 
\end{align*}
The first equality follows from Fubini's theorem. In the second equality, we have used the substitution $t=s^2$. The third equality follows from the classical formula relating the Beta function to the Gamma function (see e.g. \cite{Stein-Shakarchi1}, pp.~175--176). Using the identities 
\[|S^n| = \frac{2 \, \pi^{\frac{n+1}{2}}}{\Gamma(\frac{n+1}{2})},\] 
\[|B^{m-1}| = \frac{\pi^{\frac{m-1}{2}}}{\Gamma(\frac{m+1}{2})},\] 
and 
\[|B^{n+m}| = \frac{\pi^{\frac{n+m}{2}}}{\Gamma(\frac{n+m+2}{2})},\] 
we obtain 
\[\frac{\Gamma(\frac{n+1}{2}) \, \Gamma(\frac{m+1}{2})}{2 \, \Gamma(\frac{n+m+2}{2})} \, |B^{m-1}| = \frac{|B^{n+m}|}{|S^n|}.\] 
This completes the proof of Lemma \ref{integral}. \\

Using Lemma \ref{FWC.surjectivity} and Lemma \ref{FWC.Jacobian.estimate}, we obtain 
\begin{align*} 
|B^{n+m}| 
&\leq \int_\Sigma \bigg ( \int_{T_x^\perp \Sigma} |\det D\Phi(x,y)| \, 1_A(x,y) \, dy \bigg ) \, d\text{\rm vol}(x) \\ 
&\leq \int_\Sigma \bigg ( \int_{\{y \in T_x^\perp \Sigma: |y| < 1\}} \Big ( -\frac{\langle H(x),y \rangle}{n} \Big )_+^n \, dy \bigg ) \, d\text{\rm vol}(x). 
\end{align*} 
On the other hand, Lemma \ref{integral} implies that 
\[\int_{\{y \in T_x^\perp \Sigma: |y| < 1\}} (-\langle H(x),y \rangle)_+^n \, dy = \frac{|B^{n+m}|}{|S^n|} \, |H(x)|^n\] 
for each point $x \in \Sigma$. Putting these facts together, we conclude that 
\[|B^{n+m}| \leq \frac{|B^{n+m}|}{|S^n|} \int_\Sigma \Big ( \frac{|H(x)|}{n} \Big )^n \, d\text{\rm vol}(x).\] 
This completes the proof of Theorem \ref{FWC.estimate}. \\

\section{The Sobolev inequality for submanifolds of Euclidean space} 

\label{Michael.Simon}

In this section, we explain how the Alexandrov-Bakelman-Pucci technique can be used to prove an isoperimetric inequality for submanifolds. 

\begin{theorem}[cf. \cite{Brendle1}]
\label{Michael.Simon.inequality}
Let $\Sigma$ be a compact $n$-dimensional submanifold of $\mathbb{R}^{n+m}$ (possibly with boundary), where $m \geq 2$. Let $f$ be a smooth positive function on $\Sigma$. Then 
\[\int_\Sigma \sqrt{|\nabla^\Sigma f|^2 + f^2 \, |H|^2} + \int_{\partial \Sigma} f \geq n \, \Big ( \frac{(n+m) \, |B^{n+m}|}{m \, |B^m|} \Big )^{\frac{1}{n}} \, \Big ( \int_\Sigma f^{\frac{n}{n-1}} \Big )^{\frac{n-1}{n}},\] 
where $H$ denotes the mean curvature vector of $\Sigma$.
\end{theorem}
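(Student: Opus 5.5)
We follow Brendle's argument, which combines the constructions of Sections \ref{Sobolev} and \ref{Fenchel/Willmore.Chen.estimate}. First reduce to $\Sigma$ connected, since for disconnected $\Sigma$ we can sum over components. Next normalize $f$ so that
\[\int_\Sigma \sqrt{|\nabla^\Sigma f|^2 + f^2 |H|^2} + \int_{\partial\Sigma} f = n \int_\Sigma f^{\frac{n}{n-1}}.\]
Then solve, on $\Sigma$, the equation
\[\text{div}_\Sigma(f \nabla^\Sigma u) = n f^{\frac{n}{n-1}} - \sqrt{|\nabla^\Sigma f|^2 + f^2|H|^2}\]
with Neumann condition $\langle \nabla^\Sigma u, \eta\rangle = 1$ on $\partial\Sigma$. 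The normalization is exactly the compatibility condition, and $u \in C^{2,\gamma}$.

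Define
\[U := \{(x,y) : x \in \Sigma\setminus\partial\Sigma,\ y \in T_x^\perp\Sigma,\ |\nabla^\Sigma u(x)|^2 + |y|^2 < 1\}\]
and
\[A := \{(x,y) \in U : D^2_\Sigma u(x) - \langle I\!I(x), y\rangle \geq 0\},\]
with $\Phi(x,y) = \nabla^\Sigma u(x) + y$.

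\textbf{Surjectivity.} For $|\xi|<1$, minimize $w(x) = u(x) - \langle x, \xi\rangle$ over $\Sigma$. On $\partial\Sigma$ we have $\langle \nabla^\Sigma w, \eta\rangle = 1 - \langle \eta, \xi\rangle > 0$, so the minimum $\bar x$ is interior. There $\nabla^\Sigma u(\bar x) = \xi^{\tan}$, and we set $\bar y = \xi^\perp$. Then $D^2_\Sigma u(\bar x) - \langle I\!I(\bar x), \bar y\rangle = D^2_\Sigma w(\bar x) \geq 0$. Hence $B^{n+m} \subset \Phi(A)$.

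\textbf{Jacobian.} Using the coordinates of Lemma \ref{FWC.Jacobian.determinant}, $D\Phi$ is block upper triangular: the tangent block is $D^2_\Sigma u - \langle I\!I, y\rangle$, the normal block is the identity, and the off-diagonal tangent-to-normal block (coming from $I\!I(\nabla^\Sigma u, \cdot)$) does not affect the determinant. So $\det D\Phi = \det(D^2_\Sigma u - \langle I\!I, y\rangle)$.

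On $A$, take the trace and use the PDE:
\[\Delta_\Sigma u - \langle H, y\rangle = n f^{\frac1{n-1}} - f^{-1}\sqrt{|\nabla^\Sigma f|^2 + f^2|H|^2} - f^{-1}\langle \nabla^\Sigma f, \nabla^\Sigma u\rangle - \langle H, y\rangle.\]
By Cauchy--Schwarz the last two terms are bounded by
\[f^{-1}\sqrt{|\nabla^\Sigma f|^2 + f^2|H|^2}\,\sqrt{|\nabla^\Sigma u|^2 + |y|^2} \leq f^{-1}\sqrt{|\nabla^\Sigma f|^2 + f^2|H|^2},\]
so the trace is at most $n f^{\frac1{n-1}}$. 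AM--GM then gives $0 \leq \det D\Phi \leq f^{\frac n{n-1}}$ on $A$.

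\textbf{Main obstacle.} The naive area formula over the normal fibre $\{|y|^2 < 1 - |\nabla^\Sigma u|^2\}$ only yields $|B^{n+m}| \leq |B^m|\int f^{\frac n{n-1}}$. This gives a non-sharp constant, worse by a factor $\frac{m}{n+m}$ than the one in Theorem \ref{Michael.Simon.inequality}. The sharp constant requires an amplified version of the argument.

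For $r > 0$, replace $U$ by $\{|\nabla^\Sigma u|^2 + |y|^2 < r^2\}$ and show that all of $\{\xi : \sigma r < |\xi| < r\}$ lies in $\Phi(A_r)$. For $|\xi|$ large, minimize $w$ as before; the boundary condition only needs $|\xi| \ge$ something fixed, and we can instead use the minimum of $u(x) - \langle x,\xi\rangle$ over $\Sigma$ directly. The Jacobian bound on $A_r$ uses the refined estimate
\[\det\big(D^2_\Sigma u - \langle I\!I, y\rangle\big) \leq \Big(f^{\frac1{n-1}} + \frac{|\nabla^\Sigma u|^2 + |y|^2}{n}\,(\cdots)\Big)^n,\]
which behaves like $(f^{\frac{1}{n-1}} + \frac{r}{n}\cdot\text{bounded})^n$ uniformly in $y$.

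Integrating over the fibre $\{|y| < r\}$ gives $|B^{n+m}|(1-\sigma^{n+m})r^{n+m} \leq \int_\Sigma |B^m|\, r^m\, (f^{\frac1{n-1}} + O(r))^n$. Dividing by $r^{n+m}$ and letting $r \to \infty$ leaves only the top-order term. Sending $\sigma \to 0$ then yields the bound $\frac{(n+m)|B^{n+m}|}{m|B^m|}$ on $\int f^{\frac n{n-1}}$, which together with the normalization gives the theorem.

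The delicate point is obtaining the Jacobian bound on $A_r$ with the correct leading coefficient in $r$; this is where $m \geq 2$ enters. I expect this to be the hardest step. I would first try bounding the trace by $n f^{\frac1{n-1}}$ minus a term involving the correction $\frac{|y|^2}{\ldots}$, and then use AM--GM with the elementary inequality $\lambda^n \le \ldots$ pointwise before integrating.
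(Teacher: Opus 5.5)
Everything up to and including the pointwise bound $0\le\det D\Phi\le f^{\frac{n}{n-1}}$ on $A$ matches the paper: the normalization, the Neumann problem, the sets $U$ and $A$, surjectivity onto $B^{n+m}$, the triangular Jacobian, and Cauchy--Schwarz plus AM--GM. The gap is in the final step, which is exactly where the sharp constant comes from. Your ``amplified'' argument fails at three points.

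First, the covering claim is false. The surjectivity argument needs $1-\langle\eta,\xi\rangle>0$ on $\partial\Sigma$, and this is only guaranteed for $|\xi|<1$; for larger $|\xi|$ the minimum of $w$ can lie on $\partial\Sigma$. Moreover, $\nabla^\Sigma u$ is bounded. For a flat $n$-disk in $\mathbb{R}^n\times\{0\}$, the image of $\Phi$ therefore lies in $\{(\xi',\xi'')\in\mathbb{R}^n\times\mathbb{R}^m:|\xi'|\le\sup_\Sigma|\nabla^\Sigma u|\}$. This set meets $\{|\xi|<r\}$ in volume $O(r^m)$, far less than the volume of $\{\sigma r<|\xi|<r\}$.

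Second, once $|\nabla^\Sigma u|^2+|y|^2$ is allowed up to $r^2$, the Cauchy--Schwarz step in general only gives the trace bound $nf^{\frac1{n-1}}+(r-1)f^{-1}\sqrt{|\nabla^\Sigma f|^2+f^2|H|^2}$. After dividing by $r^{n+m}$, the surviving top-order term is built from this correction, not from $f^{\frac n{n-1}}$.

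Third, sending $\sigma\to0$ just reproduces the non-sharp bound $|B^{n+m}|\le|B^m|\int_\Sigma f^{\frac n{n-1}}$. Nothing in your scheme produces the factor $\frac{n+m}{m}$, and $m\ge2$ plays no role in the Jacobian bound.

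The missing idea is to stay inside the unit ball but restrict to a thin shell $\{\sigma<|\xi|<1\}$, where most of the volume of $B^{n+m}$ lies. Since $|\Phi(x,y)|^2=|\nabla^\Sigma u(x)|^2+|y|^2$, the part of the fibre over $x\in\Omega$ mapped into this shell is $\{y\in T_x^\perp\Sigma:\sigma^2<|\nabla^\Sigma u(x)|^2+|y|^2<1\}$. Its $m$-dimensional volume is $|B^m|\,\bigl[(1-|\nabla^\Sigma u(x)|^2)^{\frac m2}-(\sigma^2-|\nabla^\Sigma u(x)|^2)_+^{\frac m2}\bigr]$.

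This is where $m\ge2$ enters: $t\mapsto t^{\frac m2}$ is then $\frac m2$-Lipschitz on $[0,1]$, so the fibre volume is at most $\frac m2\,|B^m|\,(1-\sigma^2)$. Combining this with the bound $\det D\Phi\le f^{\frac n{n-1}}$ on $A$, which you already proved, and the area formula gives
\[|B^{n+m}|\,(1-\sigma^{n+m})\le\frac m2\,|B^m|\,(1-\sigma^2)\int_\Sigma f^{\frac n{n-1}}\]
for all $0\le\sigma<1$. Dividing by $1-\sigma$ and letting $\sigma\to1$ yields $(n+m)\,|B^{n+m}|\le m\,|B^m|\int_\Sigma f^{\frac n{n-1}}$. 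Together with the normalization, this gives the theorem.
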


The case $m=2$ is of particular interest. Using Theorem \ref{Michael.Simon.inequality} and the formula $(n+2) \, |B^{n+2}| = 2\pi \, |B^n| = 2 \, |B^2| \, |B^n|$, we obtain a sharp Sobolev inequality for submanifolds of codimension $2$.

\begin{corollary}[cf. \cite{Brendle1}]
\label{Michael.Simon.inequality.codimension.2}
Let $\Sigma$ be a compact $n$-dimensional submanifold of $\mathbb{R}^{n+2}$ (possibly with boundary). Let $f$ be a smooth positive function on $\Sigma$. Then 
\[\int_\Sigma \sqrt{|\nabla^\Sigma f|^2 + f^2 \, |H|^2} + \int_{\partial \Sigma} f \geq n \, |B^n|^{\frac{1}{n}} \, \Big ( \int_\Sigma f^{\frac{n}{n-1}} \Big )^{\frac{n-1}{n}},\] 
where $H$ denotes the mean curvature vector of $\Sigma$.
\end{corollary}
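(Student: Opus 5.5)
The corollary follows from Theorem \ref{Michael.Simon.inequality} by specializing to $m=2$. The only task is to evaluate the constant and check that it equals $|B^n|$. With $m=2$, the constant in Theorem \ref{Michael.Simon.inequality} is
\[\Big ( \frac{(n+2) \, |B^{n+2}|}{2 \, |B^2|} \Big )^{\frac{1}{n}},\]
so I need to show that $(n+2)\,|B^{n+2}| = 2\,|B^2|\,|B^n|$.

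To verify this identity I will use the Gamma function formula $|B^k| = \pi^{k/2}/\Gamma(\frac{k+2}{2})$, already used in the proof of Lemma \ref{integral}. It gives
\[|B^{n+2}| = \frac{\pi^{\frac{n+2}{2}}}{\Gamma(\frac{n+4}{2})} = \frac{\pi^{\frac{n+2}{2}}}{\frac{n+2}{2}\,\Gamma(\frac{n+2}{2})} = \frac{2\pi}{n+2}\,|B^n|.\]
Multiplying by $n+2$ yields $(n+2)\,|B^{n+2}| = 2\pi\,|B^n|$. Since $|B^2| = \pi$, this equals $2\,|B^2|\,|B^n|$.

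Substituting into the constant, the quotient inside the parentheses reduces to $|B^n|$. The right-hand side of Theorem \ref{Michael.Simon.inequality} therefore becomes $n\,|B^n|^{\frac{1}{n}} \big(\int_\Sigma f^{\frac{n}{n-1}}\big)^{\frac{n-1}{n}}$, which is exactly the asserted inequality. The hypothesis $m \geq 2$ of the theorem is satisfied, and no other hypotheses change.

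There is no real obstacle here. The only step requiring care is the Gamma function recursion $\Gamma(\frac{n+4}{2}) = \frac{n+2}{2}\,\Gamma(\frac{n+2}{2})$, which holds directly from $\Gamma(s+1) = s\,\Gamma(s)$.
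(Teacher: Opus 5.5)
Your proposal is correct and matches the paper's argument: the paper also obtains the corollary by setting $m=2$ in Theorem \ref{Michael.Simon.inequality} and using the identity $(n+2)\,|B^{n+2}| = 2\pi\,|B^n| = 2\,|B^2|\,|B^n|$. Your Gamma-function check, via $\Gamma(s+1) = s\,\Gamma(s)$, supplies the verification of that identity, which the paper states without proof.
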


In the special case when $\Sigma$ is a minimal submanifold, we can draw the following conclusion.

\begin{corollary}[cf. \cite{Brendle1}]
\label{isoperimetric.inequality.codimension.2}
Let $\Sigma$ be a compact $n$-dimensional minimal submanifold of $\mathbb{R}^{n+2}$ (possibly with boundary). Then $|\partial \Sigma| \geq n \, |B^n|^{\frac{1}{n}} \, |\Sigma|^{\frac{n-1}{n}}$.
\end{corollary}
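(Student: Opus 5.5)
The plan is to derive this directly from Corollary \ref{Michael.Simon.inequality.codimension.2} by choosing the test function $f$ identically equal to $1$. Strictly speaking, the corollary asks for a smooth positive function on $\Sigma$, and the constant function $1$ qualifies. With this choice, $\nabla^\Sigma f = 0$ on all of $\Sigma$, so the integrand of the first term becomes $\sqrt{|\nabla^\Sigma f|^2 + f^2 \, |H|^2} = |H|$.

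The only geometric input is minimality, which means $H = 0$ identically on $\Sigma$. The whole first integral therefore vanishes. The boundary term becomes $\int_{\partial \Sigma} 1 = |\partial \Sigma|$, and the right-hand side becomes $n \, |B^n|^{\frac{1}{n}} \, |\Sigma|^{\frac{n-1}{n}}$. Substituting these into Corollary \ref{Michael.Simon.inequality.codimension.2} gives exactly $|\partial \Sigma| \geq n \, |B^n|^{\frac{1}{n}} \, |\Sigma|^{\frac{n-1}{n}}$.

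No step here is a real obstacle; the work is all contained in Theorem \ref{Michael.Simon.inequality}. The one point to check is that the constant $n \, |B^n|^{\frac{1}{n}}$ in Corollary \ref{Michael.Simon.inequality.codimension.2} really comes from the constant in Theorem \ref{Michael.Simon.inequality} with $m=2$. Using $(n+2)\,|B^{n+2}| = 2\,|B^2|\,|B^n|$, we get
\[\frac{(n+2)\,|B^{n+2}|}{2\,|B^2|} = |B^n|,\]
which is the required constant. If $\Sigma$ has empty boundary, the inequality would force $|\Sigma| = 0$, which is impossible; so the statement simply shows that a compact minimal submanifold of Euclidean space must have nonempty boundary, consistent with the usual fact.
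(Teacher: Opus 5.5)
Your proposal is correct and follows the paper's route: the paper obtains this corollary directly from Corollary~\ref{Michael.Simon.inequality.codimension.2} by taking $f \equiv 1$ and using $H = 0$. Your check that $(n+2)\,|B^{n+2}| = 2\,|B^2|\,|B^n|$ turns the constant of Theorem~\ref{Michael.Simon.inequality} (with $m=2$) into $n\,|B^n|^{\frac{1}{n}}$ is also exactly how the paper passes from the theorem to the codimension-$2$ corollary.
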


Corollary \ref{Michael.Simon.inequality.codimension.2} and Corollary \ref{isoperimetric.inequality.codimension.2} immediately imply the corresponding inequalities for submanifolds of codimension $1$. We refer to \cite{Brendle4} for a direct proof in the codimension $1$ setting.

The isoperimetric inequality for minimal surfaces has a long history, going back to the fundamental work of Carleman \cite{Carleman} in 1921. A number of partial results have been established over the years; see \cite{Almgren}, \cite{Choe}, \cite{Feinberg}, \cite{Hsiung}, \cite{Li-Schoen-Yau}, \cite{Osserman-Schiffer}, \cite{Reid}, \cite{Stone}. In particular, Almgren \cite{Almgren} showed that the sharp isoperimetric inequality holds for area-minimizing submanifolds of arbitrary dimension and codimension. Almgren's proof relies on arguments from geometric measure theory together with a generalization of the Fenchel-Willmore-Chen inequality.

Corollary \ref{Michael.Simon.inequality.codimension.2} can be viewed as a sharp version of the famous Michael-Simon Sobolev inequality. The non-sharp version of the inequality was proved independently by Allard \cite{Allard} and by Michael and Simon \cite{Michael-Simon} (see also \cite{Castillon}).

In the remainder of this section, we describe a proof of Theorem \ref{Michael.Simon.inequality} using the Alexandrov-Bakelman-Pucci technique, following the arguments in \cite{Brendle1}. We refer to \cite{Brendle-Eichmair} for an alternative approach based on an optimal transport problem between spaces of unequal dimension.

We first consider the special case that $\Sigma$ is connected. By multiplying $f$ by a suitable constant, we may arrange that 
\begin{equation} 
\label{Michael.Simon.normalization}
\int_\Sigma \sqrt{|\nabla^\Sigma f|^2 + f^2 \, |H|^2} + \int_{\partial \Sigma} f = n \int_\Sigma f^{\frac{n}{n-1}}. 
\end{equation}
Since $\Sigma$ is connected, we can find a function $u: \Sigma \to \mathbb{R}$ with the property that 
\begin{equation} 
\label{Michael.Simon.pde.for.u}
\text{\rm div}_\Sigma(f \, \nabla^\Sigma u) = n \, f^{\frac{n}{n-1}} - \sqrt{|\nabla^\Sigma f|^2 + f^2 \, |H|^2} 
\end{equation}
at each point on $\Sigma$ and 
\begin{equation} 
\label{Michael.Simon.boundary.condition.for.u}
\langle \nabla^\Sigma u,\eta \rangle = 1 
\end{equation}
at each point on $\partial \Sigma$, where $\eta$ denotes the outward-pointing unit normal vector field to $\partial \Sigma$ in $\Sigma$. The function $u$ is of class $C^{2,\gamma}$ for each $0 < \gamma < 1$. We define 
\begin{align*} 
\Omega &:= \{x \in \Sigma \setminus \partial \Sigma: |\nabla^\Sigma u(x)| < 1\}, \\ 
U &:= \{(x,y): x \in \Sigma \setminus \partial \Sigma, \, y \in T_x^\perp \Sigma, \, |\nabla^\Sigma u(x)|^2 + |y|^2 < 1\}, \\ 
A &:= \{(x,y) \in U: D_\Sigma^2 u(x) - \langle I\!I(x),y \rangle \geq 0\}. 
\end{align*} 
Moreover, we define a map $\Phi: U \to \mathbb{R}^{n+m}$ by $\Phi(x,y) = \nabla^\Sigma u(x) + y$ for all $(x,y) \in U$. Note that $|\Phi(x,y)|^2 = |\nabla^\Sigma u(x)|^2+|y|^2 < 1$ for all $(x,y) \in U$.

\begin{lemma}
\label{Michael.Simon.surjectivity}
The open unit ball $B^{n+m}$ is contained in the image $\Phi(A)$.
\end{lemma}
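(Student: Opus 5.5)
We follow the proofs of Lemma \ref{Sobolev.surjectivity} and Lemma \ref{FWC.surjectivity} together. Fix an arbitrary $\xi \in \mathbb{R}^{n+m}$ with $|\xi| < 1$, and define $w: \Sigma \to \mathbb{R}$ by $w(x) := u(x) - \langle x,\xi \rangle$. Here $u$ is the solution of (\ref{Michael.Simon.pde.for.u})--(\ref{Michael.Simon.boundary.condition.for.u}).

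\emph{Step 1: $w$ attains its minimum in the interior.} Let $x \in \partial\Sigma$. Since $\eta(x)$ is a unit vector tangent to $\Sigma$, the boundary condition gives
\[\langle \nabla^\Sigma w(x),\eta(x)\rangle = 1 - \langle \eta(x),\xi\rangle \geq 1 - |\xi| > 0.\]
So $w$ strictly decreases as we move inward from $\partial\Sigma$. Because $\Sigma$ is compact, $w$ attains its minimum at some point $\bar x \in \Sigma \setminus \partial\Sigma$.

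\emph{Step 2: first-order condition.} At the interior minimum, $\nabla^\Sigma w(\bar x) = 0$. This says $\nabla^\Sigma u(\bar x) = \xi^{\top}$, the tangential projection of $\xi$ onto $T_{\bar x}\Sigma$. Set $\bar y := \xi - \nabla^\Sigma u(\bar x) = \xi^\perp \in T_{\bar x}^\perp \Sigma$. Then
\[|\nabla^\Sigma u(\bar x)|^2 + |\bar y|^2 = |\xi|^2 < 1,\]
so $(\bar x,\bar y) \in U$ and $\Phi(\bar x,\bar y) = \xi$.

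\emph{Step 3: second-order condition.} The minimum also gives $D^2_\Sigma w(\bar x) \geq 0$. The intrinsic Hessian of the linear function $\ell(x) = \langle x,\xi\rangle$ restricted to $\Sigma$ is
\[D^2_\Sigma \ell(X,Y) = \langle I\!I(X,Y),\xi\rangle = \langle I\!I(X,Y),\bar y\rangle,\]
because $I\!I$ takes values in the normal space, so only the normal part of $\xi$ contributes. Therefore
\[D^2_\Sigma w(\bar x) = D^2_\Sigma u(\bar x) - \langle I\!I(\bar x),\bar y\rangle \geq 0,\]
which means $(\bar x,\bar y) \in A$. Since $\xi$ was arbitrary, $B^{n+m} \subset \Phi(A)$.

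The main care points are the sign convention in Step 3 and the boundary sign in Step 1. For Step 3, I would verify the Hessian identity with geodesic normal coordinates at $\bar x$: the second derivative of $x \mapsto \langle x,\xi\rangle$ along a geodesic $\gamma$ is $\langle \gamma'',\xi\rangle$, and $\gamma''$ equals $I\!I(\gamma',\gamma')$. For Step 1, the inequality $\langle \eta,\xi\rangle \leq |\xi|$ uses only that $\eta$ is a unit vector, so it holds even though $\xi$ has a normal component.
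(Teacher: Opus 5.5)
Your proposal is correct and follows the paper's proof exactly: the same auxiliary function $w(x)=u(x)-\langle x,\xi\rangle$, the boundary condition forcing an interior minimum, and the first- and second-order conditions yielding $\xi=\nabla^\Sigma u(\bar x)+\bar y$ and $D^2_\Sigma u(\bar x)-\langle I\!I(\bar x),\bar y\rangle\geq 0$. You also explicitly check $|\nabla^\Sigma u(\bar x)|^2+|\bar y|^2=|\xi|^2<1$ and verify the Hessian sign convention, which the paper leaves implicit.
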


\textbf{Proof.} 
Let us fix an arbitrary vector $\xi \in \mathbb{R}^{n+m}$ such that $|\xi|<1$. We define a function $w: \Sigma \to \mathbb{R}$ by $w(x) := u(x) - \langle x,\xi \rangle$. Using (\ref{Michael.Simon.boundary.condition.for.u}), we obtain 
\[\langle \nabla^\Sigma w(x),\eta(x) \rangle = \langle \nabla^\Sigma u(x),\eta(x) \rangle - \langle \eta(x),\xi \rangle = 1 - \langle \eta(x),\xi \rangle > 0\] 
for each point $x \in \partial \Sigma$. Consequently, we can find a point $\bar{x} \in \Sigma \setminus \partial \Sigma$ where the function $w$ attains its minimum. Clearly, $\nabla^\Sigma w(\bar{x}) = 0$ and $D_\Sigma^2 w(\bar{x}) \geq 0$. Therefore, we can find a vector $\bar{y} \in T_{\bar{x}}^\perp \Sigma$ such that $\xi = \nabla^\Sigma u(\bar{x}) + \bar{y}$ and $D_\Sigma^2 u(\bar{x}) - \langle I\!I(\bar{x}),\bar{y} \rangle \geq 0$. This implies $(\bar{x},\bar{y}) \in A$ and $\Phi(\bar{x},\bar{y}) = \xi$. Thus, $B^{n+m} \subset \Phi(A)$. This completes the proof of Lemma \ref{Michael.Simon.surjectivity}. \\

\begin{lemma} 
\label{Michael.Simon.Jacobian.determinant}
The Jacobian determinant of $\Phi$ is given by 
\[\det D\Phi(x,y) = \det (D_\Sigma^2 u(x) - \langle I\!I(x),y \rangle)\] 
for all $(x,y) \in U$.
\end{lemma}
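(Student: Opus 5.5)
We will mimic the computation in the proof of Lemma \ref{FWC.Jacobian.determinant}, now with the extra tangential term $\nabla^\Sigma u(x)$. Fix a point $(\bar{x},\bar{y}) \in U$. We choose an orthonormal basis $\{e_1,\hdots,e_n\}$ of $T_{\bar{x}} \Sigma$ and local coordinates $(x_1,\hdots,x_n)$ on $\Sigma$ with $\frac{\partial}{\partial x_i} = e_i$ at $\bar{x}$. We also choose a local orthonormal normal frame $\{\nu_1,\hdots,\nu_m\}$ with $\frac{\partial}{\partial x_i} \nu_\alpha \in T_{\bar{x}} \Sigma$ at $\bar{x}$, so that the normal connection terms vanish at $\bar{x}$. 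Writing $y = \sum_\alpha y_\alpha \nu_\alpha$, the functions $(x_1,\hdots,x_n,y_1,\hdots,y_m)$ form local coordinates on the total space of $T^\perp \Sigma$. Since $\{e_1,\hdots,e_n,\nu_1,\hdots,\nu_m\}$ is orthonormal at $\bar{x}$, the volume forms match, and $\det D\Phi$ is computed by expressing the partial derivatives of $\Phi$ in this basis of $\mathbb{R}^{n+m}$.

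The derivatives in the $y_\alpha$ directions are immediate: $\frac{\partial \Phi}{\partial y_\alpha} = \nu_\alpha$. For the $x_i$ directions, we differentiate the two pieces separately.

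\textbf{Tangential piece.} We view $\nabla^\Sigma u$ as an $\mathbb{R}^{n+m}$-valued function on $\Sigma$. Its ambient derivative $\frac{\partial}{\partial x_i}\nabla^\Sigma u$ splits into a tangential part, which is the Levi-Civita derivative $\sum_j (D_\Sigma^2 u)(e_i,e_j)\, e_j$, and a normal part, which is $I\!I(e_i,\nabla^\Sigma u(\bar{x}))$ by the Gauss formula.

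\textbf{Normal piece.} Differentiating $y = \sum_\alpha \bar{y}_\alpha \nu_\alpha$ with $\bar{y}_\alpha$ held fixed gives a purely tangential vector, by the choice of frame. The Weingarten equation identifies it as $-\sum_j \langle I\!I(e_i,e_j),\bar{y}\rangle e_j$.

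Adding the two pieces, we obtain
\[\frac{\partial \Phi}{\partial x_i} = \sum_{j=1}^n \big( (D_\Sigma^2 u)(e_i,e_j) - \langle I\!I(e_i,e_j),\bar{y}\rangle \big)\, e_j + I\!I(e_i,\nabla^\Sigma u(\bar{x}))\]
at $(\bar{x},\bar{y})$.

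Hence, in the bases above, $D\Phi(\bar{x},\bar{y})$ has the block form
\[\begin{bmatrix} D_\Sigma^2 u(\bar{x}) - \langle I\!I(\bar{x}),\bar{y}\rangle & 0 \\ \ast & \text{\rm id} \end{bmatrix}.\]
The lower-left block $\ast$ has entries $\langle I\!I(e_i,\nabla^\Sigma u(\bar{x})),\nu_\alpha\rangle$. The upper-right block vanishes because the $\Phi$-derivatives in the $y$-directions have no tangential component. The matrix is block lower-triangular, so its determinant is the product of the determinants of the diagonal blocks. This gives $\det(D_\Sigma^2 u(\bar{x}) - \langle I\!I(\bar{x}),\bar{y}\rangle)$, which is the claim.

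The main point needing care is the derivative of the tangential part $\nabla^\Sigma u$. It produces an off-diagonal normal component which, unlike in Lemma \ref{FWC.Jacobian.determinant}, does not vanish. The key observation is that this component sits in the lower-left block only, so it does not affect the determinant. Beyond that, one must keep the sign conventions for $I\!I$ consistent between the Gauss and Weingarten formulas, exactly as in the proof of Lemma \ref{FWC.Jacobian.determinant}.
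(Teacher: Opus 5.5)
Your proposal is correct and is essentially the paper's proof. It uses the same adapted coordinates with a normal frame satisfying $\frac{\partial}{\partial x_i} \nu_\alpha \in T_{\bar{x}} \Sigma$, the same formulas for $\frac{\partial \Phi}{\partial x_i}$ and $\frac{\partial \Phi}{\partial y_\alpha}$, and the same block lower-triangular form of $D\Phi(\bar{x},\bar{y})$; you only spell out the Gauss and Weingarten steps that the paper compresses into ``a straightforward calculation.''
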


\textbf{Proof.} 
Let us fix a point $(\bar{x},\bar{y}) \in U$. Let $\{e_1,\hdots,e_n\}$ be an orthonormal basis of the tangent space $T_{\bar{x}} \Sigma$, and let $(x_1,\hdots,x_n)$ be a local coordinate system on $\Sigma$ such that $\frac{\partial}{\partial x_i} = e_i$ at the point $\bar{x}$. Moreover, let $\{\nu_1,\hdots,\nu_m\}$ denote a local orthonormal frame for the normal bundle $T^\perp \Sigma$ such that $\frac{\partial}{\partial x_i} \nu_\alpha \in T_{\bar{x}} \Sigma$ at the point $\bar{x}$. Every normal vector $y$ can be written in the form $y = \sum_{\alpha=1}^m y_\alpha \nu_\alpha$. With this understood, $(x_1,\hdots,x_n,y_1,\hdots,y_m)$ is a local coordinate system on the total space of the normal bundle $T^\perp \Sigma$. A straightforward calculation gives 
\[\frac{\partial \Phi}{\partial x_i} = \sum_{j=1}^n ((D_\Sigma^2 u)(e_i,e_j) - \langle I\!I(e_i,e_j),\bar{y} \rangle) \, e_j + I\!I(e_i,\nabla^\Sigma u)\] 
and 
\[\frac{\partial \Phi}{\partial y_\alpha} = \nu_\alpha\] 
at the point $(\bar{x},\bar{y})$. Therefore, the differential of $\Phi$ at the point $(\bar{x},\bar{y})$ takes the form 
\[D\Phi(\bar{x},\bar{y}) = \begin{bmatrix} D_\Sigma^2 u(\bar{x}) - \langle I\!I(\bar{x}),\bar{y} \rangle & 0 \\ * & \text{\rm id} \end{bmatrix}.\] 
The assertion follows by taking the determinant. This completes the proof of Lemma \ref{Michael.Simon.Jacobian.determinant}. \\

\begin{lemma} 
\label{Michael.Simon.Jacobian.estimate}
The Jacobian determinant of $\Phi$ satisfies 
\[0 \leq \det D\Phi(x,y) \leq f(x)^{\frac{n}{n-1}}\] 
for all $(x,y) \in A$.
\end{lemma}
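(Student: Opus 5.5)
Fix $(x,y) \in A$. By Lemma \ref{Michael.Simon.Jacobian.determinant}, it suffices to bound $\det(D_\Sigma^2 u(x) - \langle I\!I(x),y\rangle)$. Since $(x,y) \in A$, this symmetric matrix is weakly positive definite, so its determinant is nonnegative. The arithmetic-geometric mean inequality then gives
\[0 \leq \det(D_\Sigma^2 u(x) - \langle I\!I(x),y\rangle) \leq \Big( \frac{\Delta_\Sigma u(x) - \langle H(x),y\rangle}{n} \Big)^n,\]
where we use that the trace of $\langle I\!I(x),y\rangle$ equals $\langle H(x),y\rangle$. It therefore remains to show
\[\Delta_\Sigma u(x) - \langle H(x),y\rangle \leq n\, f(x)^{\frac{1}{n-1}}.\]

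Expanding (\ref{Michael.Simon.pde.for.u}) gives
\[f\,\Delta_\Sigma u = n f^{\frac{n}{n-1}} - \sqrt{|\nabla^\Sigma f|^2 + f^2|H|^2} - \langle \nabla^\Sigma f, \nabla^\Sigma u\rangle.\]
Hence
\[f\,(\Delta_\Sigma u - \langle H,y\rangle) = n f^{\frac{n}{n-1}} - \sqrt{|\nabla^\Sigma f|^2 + f^2|H|^2} - \langle \nabla^\Sigma f, \nabla^\Sigma u\rangle - \langle fH, y\rangle.\]

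The key step is to bound the last two terms together. Since $\nabla^\Sigma f$ and $\nabla^\Sigma u$ are tangent while $fH$ and $y$ are normal, the sum $\langle \nabla^\Sigma f,\nabla^\Sigma u\rangle + \langle fH,y\rangle$ is the inner product in $\mathbb{R}^{n+m}$ of the vectors $\nabla^\Sigma f + fH$ and $\nabla^\Sigma u + y$. By orthogonality, these vectors have lengths $\sqrt{|\nabla^\Sigma f|^2 + f^2|H|^2}$ and $\sqrt{|\nabla^\Sigma u|^2 + |y|^2}$ respectively. The second length is less than $1$ because $(x,y) \in U$. Cauchy-Schwarz therefore yields
\[-\langle \nabla^\Sigma f,\nabla^\Sigma u\rangle - \langle fH,y\rangle \leq \sqrt{|\nabla^\Sigma f|^2 + f^2|H|^2}.\]
This cancels the square-root term, so $f(\Delta_\Sigma u - \langle H,y\rangle) \leq n f^{\frac{n}{n-1}}$. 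Dividing by $f > 0$ gives the required bound, and raising it to the $n$-th power yields the estimate $\det D\Phi(x,y) \leq f(x)^{\frac{n}{n-1}}$.

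The only nontrivial point is this Cauchy-Schwarz step, which pairs the tangential and normal pieces and is exactly why the square root $\sqrt{|\nabla^\Sigma f|^2 + f^2|H|^2}$ appears in the equation for $u$. Everything else parallels Lemma \ref{Sobolev.Jacobian.estimate}.
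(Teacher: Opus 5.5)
Your proposal is correct and follows the paper's proof essentially verbatim: you use the Cauchy--Schwarz inequality to bound $-\langle \nabla^\Sigma f,\nabla^\Sigma u\rangle - f\langle H,y\rangle$ by $\sqrt{|\nabla^\Sigma f|^2+f^2|H|^2}$ via $|\nabla^\Sigma u|^2+|y|^2<1$, which gives $\Delta_\Sigma u - \langle H,y\rangle \leq n f^{\frac{1}{n-1}}$. You then apply the arithmetic-geometric mean inequality to the weakly positive definite matrix $D_\Sigma^2 u - \langle I\!I,y\rangle$ and invoke Lemma \ref{Michael.Simon.Jacobian.determinant}. The only point left implicit is that the trace is nonnegative, which is needed to raise the bound to the $n$-th power; it follows at once from positive semidefiniteness, and the paper leaves it implicit as well.
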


\textbf{Proof.} 
Let us fix a point $(x,y) \in A$. Then $|\nabla^\Sigma u(x)|^2+|y|^2 < 1$. Using the Cauchy-Schwarz inequality, we obtain 
\begin{align*} 
&-\langle \nabla^\Sigma f(x),\nabla^\Sigma u(x) \rangle - f(x) \, \langle H(x),y \rangle \\ 
&\leq \sqrt{|\nabla^\Sigma f(x)|^2 + f(x)^2 \, |H(x)|^2} \, \sqrt{|\nabla^\Sigma u(x)|^2+|y|^2} \\ 
&\leq \sqrt{|\nabla^\Sigma f(x)|^2 + f(x)^2 \, |H(x)|^2}. 
\end{align*}
Using (\ref{Michael.Simon.pde.for.u}), we obtain 
\begin{align*} 
&\Delta_\Sigma u(x) - \langle H(x),y \rangle \\ 
&= n \, f(x)^{\frac{1}{n-1}} - f(x)^{-1} \, \sqrt{|\nabla^\Sigma f(x)|^2 + f(x)^2 \, |H(x)|^2} \\ 
&- f(x)^{-1} \, \langle \nabla^\Sigma f(x),\nabla^\Sigma u(x) \rangle - \langle H(x),y \rangle \\ 
&\leq n \, f(x)^{\frac{1}{n-1}}. 
\end{align*} 
Moreover, $D_\Sigma^2 u(x) - \langle I\!I(x),y \rangle \geq 0$ since $(x,y) \in A$. Hence, the arithmetic-geometric mean inequality implies
\[0 \leq \det (D_\Sigma^2 u(x) - \langle I\!I(x),y \rangle) \leq \Big ( \frac{\text{\rm tr}(D_\Sigma^2 u(x) - \langle I\!I(x),y \rangle)}{n} \Big )^n \leq f(x)^{\frac{n}{n-1}}.\] 
The assertion follows now from Lemma \ref{Michael.Simon.Jacobian.determinant}. This completes the proof of Lemma \ref{Michael.Simon.Jacobian.estimate}. \\ 

Using Lemma \ref{Michael.Simon.surjectivity} and Lemma \ref{Michael.Simon.Jacobian.estimate}, we obtain 
\begin{align*} 
&|B^{n+m}| \, (1-\sigma^{n+m}) \\ 
&= \int_{\{\xi \in \mathbb{R}^{n+m}: \sigma^2 < |\xi|^2 < 1\}} 1 \, d\xi \\ 
&\leq \int_\Omega \bigg ( \int_{\{y \in T_x^\perp \Sigma: \sigma^2 < |\Phi(x,y)|^2 < 1\}} |\det D\Phi(x,y)| \, 1_A(x,y) \, dy \bigg ) \, d\text{\rm vol}(x) \\ 
&\leq \int_\Omega \bigg ( \int_{\{y \in T_x^\perp \Sigma: \sigma^2 < |\nabla^\Sigma u(x)|^2+|y|^2 < 1\}} f(x)^{\frac{n}{n-1}} \, dy \bigg ) \, d\text{\rm vol}(x) \\ 
&= |B^m| \int_\Omega \Big [ (1-|\nabla^\Sigma u(x)|^2)^{\frac{m}{2}} -  (\sigma^2-|\nabla^\Sigma u(x)|^2)_+^{\frac{m}{2}} \Big ] \, f(x)^{\frac{n}{n-1}} \, d\text{\rm vol}(x)
\end{align*} 
for all $0 \leq \sigma < 1$. Since $m \geq 2$, an elementary inequality gives $b^{\frac{m}{2}} - a^{\frac{m}{2}} \leq \frac{m}{2} \, (b-a)$ for $0 \leq a \leq b \leq 1$. Consequently, 
\begin{align*} 
&(1-|\nabla^\Sigma u(x)|^2)^{\frac{m}{2}} -  (\sigma^2-|\nabla^\Sigma u(x)|^2)_+^{\frac{m}{2}} \\ 
&\leq \frac{m}{2} \, \Big [ (1-|\nabla^\Sigma u(x)|^2) -  (\sigma^2-|\nabla^\Sigma u(x)|^2)_+ \Big ] \\ 
&\leq \frac{m}{2} \, (1-\sigma^2) 
\end{align*}
for all $x \in \Omega$ and all $0 \leq \sigma < 1$. Putting these facts together, we obtain 
\[|B^{n+m}| \, (1-\sigma^{n+m}) \leq \frac{m}{2} \, |B^m| \, (1-\sigma^2) \int_\Sigma f^{\frac{n}{n-1}}\] 
for all $0 \leq \sigma < 1$. Finally, we divide by $1-\sigma$ and take the limit as $\sigma \to 1$. Thus, 
\[(n+m) \, |B^{n+m}| \leq m \, |B^m| \int_\Sigma f^{\frac{n}{n-1}}.\] 
Combining this inequality with the normalization condition (\ref{Michael.Simon.normalization}), we conclude that 
\begin{align*} 
&\int_\Sigma \sqrt{|\nabla^\Sigma f|^2 + f^2 \, |H|^2} + \int_{\partial \Sigma} f \\ 
&= n \int_\Sigma f^{\frac{n}{n-1}} \geq n \, \Big ( \frac{(n+m) \, |B^{n+m}|}{m \, |B^m|} \Big )^{\frac{1}{n}} \, \Big ( \int_\Sigma f^{\frac{n}{n-1}} \Big )^{\frac{n-1}{n}}. 
\end{align*} 
This completes the proof of Theorem \ref{Michael.Simon.inequality} in the special case when $\Sigma$ is connected. 

Finally, if $\Sigma$ is disconnected, we apply the inequality to each connected component of $\Sigma$, and take the sum over all connected components. \\

\section{The logarithmic Sobolev inequality for submanifolds of Euclidean space} 

\label{log.Sobolev}

In this section, we discuss a sharp logarithmic Sobolev inequality for submanifolds.

\begin{theorem}[cf. \cite{Brendle2}] 
\label{log.Sobolev.inequality}
Let $\Sigma$ be a compact $n$-dimensional submanifold of $\mathbb{R}^{n+m}$ without boundary. Let $f$ be a positive smooth function on $\Sigma$. Then
\begin{align*} 
&\int_\Sigma f \, \Big ( \log f + n + \frac{n}{2} \, \log(4\pi) \Big ) - \int_\Sigma f^{-1} \, |\nabla^\Sigma f|^2 - \int_\Sigma f \, |H|^2 \\ 
&\leq \Big ( \int_\Sigma f \Big ) \, \log \Big ( \int_\Sigma f \Big ), 
\end{align*} 
where $H$ denotes the mean curvature vector of $\Sigma$.
\end{theorem}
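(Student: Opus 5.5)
We follow the scheme of Sections~\ref{Sobolev} and \ref{Michael.Simon}, but use Gaussian weights in place of the unit ball. First we reduce to the case where $\Sigma$ is connected. Both sides of the inequality scale compatibly under $f \mapsto cf$: the left side gains $c\log c\int f$ and the right side gains the same amount. So we may normalize
\[\int_\Sigma f \Big( \log f + n + \frac{n}{2}\log(4\pi)\Big) - \int_\Sigma f^{-1}|\nabla^\Sigma f|^2 - \int_\Sigma f|H|^2 = 0,\]
and we must then show $\int_\Sigma f \geq 1$.

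Since $\Sigma$ is closed and connected, we solve
\[\text{div}_\Sigma(f\,\nabla^\Sigma u) = f\log f + nf + \tfrac n2\log(4\pi) f - f^{-1}|\nabla^\Sigma f|^2 - f|H|^2,\]
which is solvable because the right-hand side integrates to zero. We set $U=\{(x,y): y\in T^\perp_x\Sigma\}$, $A=\{(x,y)\in U: D^2_\Sigma u(x)-\langle I\!I(x),y\rangle\geq 0\}$ and $\Phi(x,y)=\nabla^\Sigma u(x)+y$.

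Surjectivity holds exactly as in Lemma~\ref{Michael.Simon.surjectivity}. Given any $\xi\in\mathbb{R}^{n+m}$, we minimize $u(x)-\langle x,\xi\rangle$ over the closed manifold $\Sigma$, where no boundary condition is needed. This gives $\Phi(A)=\mathbb{R}^{n+m}$. The Jacobian is computed as in Lemma~\ref{Michael.Simon.Jacobian.determinant}: $\det D\Phi = \det(D^2_\Sigma u-\langle I\!I,y\rangle)$.

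The key step is a pointwise Jacobian bound with a Gaussian weight. At $(x,y)\in A$, the PDE gives
\[\Delta_\Sigma u - \langle H,y\rangle = \log f + n + \tfrac n2\log(4\pi) - \frac{|\nabla^\Sigma f|^2}{f^2} - |H|^2 - \frac{\langle\nabla^\Sigma f,\nabla^\Sigma u\rangle}{f} - \langle H,y\rangle.\]
Completing squares, we use $-\langle \nabla f/f,\nabla u\rangle - |\nabla f|^2/f^2 \leq \tfrac14|\nabla u|^2$ and $-\langle H,y\rangle-|H|^2\leq \tfrac14|y|^2$. This yields
\[\text{tr} \leq \log f + n + \tfrac n2\log(4\pi) + \tfrac14|\Phi(x,y)|^2.\]
Rather than applying AM--GM, we use $\det M\leq e^{\text{tr}M-n}$ for $M\geq 0$, which follows from $\lambda\leq e^{\lambda-1}$. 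This gives
\[0\leq \det D\Phi(x,y) \leq (4\pi)^{n/2} f(x)\, e^{\frac14|\Phi(x,y)|^2}.\]

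Now we integrate $e^{-|\xi|^2/4}$ over $\mathbb{R}^{n+m}$ and use the area formula:
\[(4\pi)^{\frac{n+m}2} \leq \int_\Sigma\int_{T^\perp_x\Sigma} e^{-\frac14|\Phi|^2}\,|\det D\Phi|\,1_A\,dy\,d\text{vol} \leq (4\pi)^{n/2}\int_\Sigma f(x)\int_{T_x^\perp\Sigma} 1\,dy\,d\text{vol}.\]
The inner integral diverges, and this is the main obstacle: the Gaussian factor cancels exactly and nothing is left to integrate in $y$. The fix is to weaken the square completion in the normal direction. We use instead $-\langle H,y\rangle - |H|^2 \leq \tfrac14|y|^2$ applied only to the $H$ term, but we keep the Gaussian factor in the form $e^{\frac14|\nabla^\Sigma u|^2+\frac14|y|^2}$ and integrate against a weight $e^{-|\xi|^2/4}$ whose $y$-part we treat more carefully. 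The natural attempt is to bound $\text{tr}$ by $\log f+n+\frac n2\log(4\pi)+\frac14|\nabla u|^2+\frac14|y|^2 - \frac{|y|^2}{4}\cdot 0$. If the cancellation still persists, we replace $e^{-|\xi|^2/4}$ by $e^{-|\xi|^2/2}$ and redo the square completion with constant $\tfrac12$. In that case we absorb $|H|^2$ via $-\langle H,y\rangle \leq |H|^2+\tfrac14|y|^2$ and leave a factor $e^{-\frac14|y|^2}$ on the normal fiber. Its integral is $(4\pi)^{m/2}$, and this must match the left side $(4\pi)^{(n+m)/2}$ so that the conclusion is $\int_\Sigma f\geq 1$. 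Tuning these constants so the fiber integral is finite and the powers of $4\pi$ match is the delicate point, and we expect the correct weights to be those for which equality holds for flat $\Sigma$ with Gaussian $f$. The disconnected case then follows by applying the result to each component and using the superadditivity of $t\log t$.
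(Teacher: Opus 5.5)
Most of your setup is sound and parallels the paper: the normalization with $\int_\Sigma f \geq 1$ as the goal, the solvable PDE, surjectivity $\Phi(A)=\mathbb{R}^{n+m}$ on the closed manifold, the Jacobian formula, the bound $\det M \leq e^{\mathrm{tr} M - n}$, and the reduction to connected components. There is, however, a genuine gap at the decisive step. You located it correctly but did not close it.

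When you complete the square in the normal direction, you use the inequality $-\langle H,y\rangle - |H|^2 \leq \frac14|y|^2$. This throws away exactly the term that is needed. The paper instead uses the identity
\[-|H|^2 - \langle H,y\rangle = \tfrac14 |y|^2 - \tfrac14 |2H+y|^2\]
and keeps the last term. It discards only the tangential square $-\frac14 f^{-2}|2\nabla^\Sigma f + f\,\nabla^\Sigma u|^2$. In your normalization this gives, for $(x,y)\in A$,
\[0 \leq e^{-\frac14 |\Phi(x,y)|^2} \det D\Phi(x,y) \leq (4\pi)^{\frac n2} \, f(x) \, e^{-\frac14 |2H(x)+y|^2}.\]
The factor $e^{\frac14|y|^2}$ cancels the normal part of the Gaussian weight. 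The remaining factor $e^{-\frac14|2H(x)+y|^2}$ is a Gaussian on the fiber centered at $-2H(x)$, and its integral over $T_x^\perp\Sigma$ is $(4\pi)^{\frac m2}$ regardless of $H(x)$. The area formula then gives $(4\pi)^{\frac{n+m}2} \leq (4\pi)^{\frac{n+m}2}\int_\Sigma f$, i.e.\ $\int_\Sigma f \geq 1$. So the curvature term is not merely something to absorb: the square $|2H+y|^2$ is what makes the fiber integral finite, with exactly the sharp constant.

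Your proposed repairs do not work.
\begin{itemize}
\item The ``weaker completion'' paragraph reuses the same inequality $-\langle H,y\rangle - |H|^2 \leq \frac14|y|^2$, so the divergence persists.
\item Switching to the weight $e^{-|\xi|^2/2}$ makes the left side $(2\pi)^{\frac{n+m}2}$, not $(4\pi)^{\frac{n+m}2}$ as you wrote. Carrying out the estimate you describe (discarding both squares, leaving $e^{-\frac14|y|^2}$ on the fiber) then gives only $(2\pi)^{\frac{n+m}2} \leq (4\pi)^{\frac{n+m}2}\int_\Sigma f$, that is $\int_\Sigma f \geq 2^{-\frac{n+m}2}$. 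This is a non-sharp inequality.
\end{itemize}
As written, the proposal does not prove the theorem. The missing idea is to keep $-\frac14|2H+y|^2$ exactly.
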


If we put 
\[f = (4\pi)^{-\frac{n}{2}} \, e^{-\frac{1}{4} \, |x|^2} \, \varphi,\] 
then Theorem \ref{log.Sobolev.inequality} can be rewritten in the following equivalent form.

\begin{corollary}[cf. \cite{Brendle2}] 
\label{log.Sobolev.inequality.alternative.version}
Let $\Sigma$ be a compact $n$-dimensional submanifold of $\mathbb{R}^{n+m}$ without boundary, and let 
\[d\gamma = (4\pi)^{-\frac{n}{2}} \, e^{-\frac{1}{4} \, |x|^2} \, d\text{\rm vol}\] 
denote the Gaussian measure on $\Sigma$. Let $\varphi$ be a positive smooth function on $\Sigma$. Then
\begin{align*} 
&\int_\Sigma \varphi \, \log \varphi \, d\gamma - \int_\Sigma \varphi^{-1} \, |\nabla^\Sigma \varphi|^2 \, d\gamma - \int_\Sigma \varphi \, \Big | H + \frac{1}{2} \, x^\perp \Big |^2 \, d\gamma \\ 
&\leq \Big ( \int_\Sigma \varphi \, d\gamma \Big ) \, \log \Big ( \int_\Sigma \varphi \, d\gamma \Big ), 
\end{align*} 
where $H$ denotes the mean curvature vector of $\Sigma$.
\end{corollary}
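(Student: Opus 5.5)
The plan is to obtain the corollary from Theorem \ref{log.Sobolev.inequality} by direct substitution. I will insert $f = (4\pi)^{-\frac{n}{2}} \, e^{-\frac{1}{4}|x|^2} \, \varphi$ and express every term as an integral against $d\gamma$. The bookkeeping leaves one discrepancy, and a single integration by parts on the closed manifold $\Sigma$ removes it. The right-hand sides already agree, since $\int_\Sigma f = \int_\Sigma \varphi \, d\gamma$.

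\textbf{Step 1: the $\log f$ term.} We have $\log f = \log \varphi - \frac{n}{2}\log(4\pi) - \frac{1}{4}|x|^2$. Hence
\[\int_\Sigma f \, \Big(\log f + n + \frac{n}{2}\log(4\pi)\Big) = \int_\Sigma \varphi \log \varphi \, d\gamma + \int_\Sigma \Big(n - \frac{1}{4}|x|^2\Big) \varphi \, d\gamma.\]

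\textbf{Step 2: the gradient term.} Write $x^T$ for the tangential projection of the position vector, so that $\nabla^\Sigma \frac{1}{4}|x|^2 = \frac{1}{2}x^T$. Then $f^{-1}\nabla^\Sigma f = \varphi^{-1}\nabla^\Sigma\varphi - \frac{1}{2}x^T$, which gives
\[\int_\Sigma f^{-1}|\nabla^\Sigma f|^2 = \int_\Sigma \Big(\varphi^{-1}|\nabla^\Sigma \varphi|^2 - \langle \nabla^\Sigma\varphi, x^T\rangle + \frac{1}{4}\varphi\,|x^T|^2\Big) \, d\gamma.\]

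\textbf{Step 3: the curvature term.} Clearly $\int_\Sigma f|H|^2 = \int_\Sigma \varphi |H|^2 \, d\gamma$. On the other side, $|H + \frac{1}{2}x^\perp|^2 = |H|^2 + \langle H, x\rangle + \frac{1}{4}|x^\perp|^2$, using $H \perp T\Sigma$. With $|x|^2 = |x^T|^2 + |x^\perp|^2$, the two left-hand sides coincide exactly when
\[\int_\Sigma \Big( \langle \nabla^\Sigma \varphi, x^T\rangle + \Big(n - \frac{1}{2}|x^T|^2\Big)\varphi + \varphi \, \langle H, x\rangle \Big) \, d\gamma = 0.\]

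\textbf{Step 4: integration by parts.} This identity is the only step needing care, and the main issue is the sign convention for $H$. From the proof of Lemma \ref{FWC.surjectivity}, the paper's convention is $D^2_\Sigma \langle x,\xi\rangle = \langle I\!I,\xi\rangle$, so $\Delta_\Sigma x = H$. Consequently $\text{\rm div}_\Sigma(x^T) = n + \langle H, x\rangle$. A direct computation then gives
\[\text{\rm div}_\Sigma\Big(\varphi \, e^{-\frac{1}{4}|x|^2} x^T\Big) = e^{-\frac{1}{4}|x|^2} \Big( \langle \nabla^\Sigma\varphi, x^T\rangle + \varphi\,(n + \langle H, x\rangle) - \frac{1}{2}\varphi\,|x^T|^2 \Big).\]
Since $\Sigma$ has no boundary, the divergence theorem makes the integral of the left-hand side vanish. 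This is exactly the required identity, so Theorem \ref{log.Sobolev.inequality} becomes the stated inequality for $\varphi$.
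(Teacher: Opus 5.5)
Your proposal is correct and uses the same approach as the paper: the substitution $f = (4\pi)^{-\frac{n}{2}} \, e^{-\frac{1}{4}|x|^2} \, \varphi$ into Theorem \ref{log.Sobolev.inequality}. You supply the bookkeeping the paper leaves implicit, namely the identity obtained from $\int_\Sigma \text{\rm div}_\Sigma(\varphi \, e^{-\frac{1}{4}|x|^2} \, x^T) = 0$, and your reading of the sign convention $\Delta_\Sigma x = H$ is the right one.
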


Corollary \ref{log.Sobolev.inequality.alternative.version} can be viewed as a sharp version of Ecker's logarithmic Sobolev inequality for submanifolds \cite{Ecker}. 

In the remainder of this section, we explain how Theorem \ref{log.Sobolev.inequality} can be proved using the Alexandrov-Bakelman-Pucci technique. 

We first consider the special case that $\Sigma$ is connected. By multiplying $f$ by a suitable constant, we may arrange that 
\begin{equation} 
\label{log.Sobolev.normalization}
\int_\Sigma f \, \log f - \int_\Sigma f^{-1} \, |\nabla^\Sigma f|^2 - \int_\Sigma f \, |H|^2 = 0. 
\end{equation}
Since $\Sigma$ is connected, we can find a smooth function $u: \Sigma \to \mathbb{R}$ such that 
\begin{equation}
\label{log.Sobolev.pde.for.u} 
\text{\rm div}_\Sigma(f \, \nabla^\Sigma u) = f \, \log f - f^{-1} \, |\nabla^\Sigma f|^2 - f \, |H|^2. 
\end{equation}
We define 
\begin{align*} 
U &:= \{(x,y): x \in \Sigma, \, y \in T_x^\perp \Sigma\}, \\ 
A &:= \{(x,y) \in U: D_\Sigma^2 u(x) - \langle I\!I(x),y \rangle \geq 0\}. 
\end{align*} 
Moreover, we define a map $\Phi: U \to \mathbb{R}^{n+m}$ by $\Phi(x,y) = \nabla^\Sigma u(x) + y$ for all $(x,y) \in U$. Note that $|\Phi(x,y)|^2 = |\nabla^\Sigma u(x)|^2+|y|^2$ for all $(x,y) \in U$. Arguing as in the proof of Lemma \ref{Michael.Simon.surjectivity}, we can show that 
\[\Phi(A) = \mathbb{R}^{n+m}.\] 
Moreover, the Jacobian determinant of $\Phi$ is given by 
\begin{equation} 
\label{Jacobian.determinant}
\det D\Phi(x,y) = \det (D_\Sigma^2 u(x) - \langle I\!I(x),y \rangle) 
\end{equation}
for each point $(x,y) \in U$. 

\begin{lemma}
\label{log.Sobolev.Jacobian.estimate}
The Jacobian determinant of $\Phi$ satisfies 
\[0 \leq e^{-\frac{1}{4} \, |\Phi(x,y)|^2} \, \det D\Phi(x,y) \leq f(x) \, e^{-\frac{1}{4} \, |2H(x)+y|^2 - n}\] 
for each point $(x,y) \in A$.
\end{lemma}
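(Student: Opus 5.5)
The plan is to combine the arithmetic-geometric mean inequality with the elementary bound $t \leq e^{t-1}$, and then use the equation (\ref{log.Sobolev.pde.for.u}) to rewrite the trace. The two exponents will then differ by a perfect square, which finishes the argument.

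Fix $(x,y) \in A$, so $D_\Sigma^2 u(x) - \langle I\!I(x),y \rangle \geq 0$. By (\ref{Jacobian.determinant}), $\det D\Phi(x,y)$ is the determinant of this nonnegative symmetric matrix, which gives the lower bound $0 \leq \det D\Phi(x,y)$. Writing $\tau := \Delta_\Sigma u(x) - \langle H(x),y \rangle$ for its trace, the arithmetic-geometric mean inequality gives $\det D\Phi(x,y) \leq (\tau/n)^n$. Since $\tau/n \geq 0$ and $0 \leq s \leq e^{s-1}$ for $s \geq 0$, this yields
\[\det D\Phi(x,y) \leq \Big( \frac{\tau}{n} \Big)^n \leq e^{\tau - n}.\]
This replaces the pointwise bound $f^{\frac{1}{n-1}}$ used in Lemma \ref{Michael.Simon.Jacobian.estimate}.

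Next, I expand (\ref{log.Sobolev.pde.for.u}) and divide by $f$ to express $\Delta_\Sigma u$:
\[\Delta_\Sigma u = \log f - f^{-2} \, |\nabla^\Sigma f|^2 - |H|^2 - f^{-1} \, \langle \nabla^\Sigma f, \nabla^\Sigma u \rangle.\]
By the note preceding the lemma, $|\Phi(x,y)|^2 = |\nabla^\Sigma u(x)|^2 + |y|^2$. Hence, at the point $x$,
\[-\frac{1}{4} \, |\Phi(x,y)|^2 + \tau - n = \log f - n - f^{-2} \, |\nabla^\Sigma f|^2 - f^{-1} \, \langle \nabla^\Sigma f,\nabla^\Sigma u \rangle - \frac{1}{4} \, |\nabla^\Sigma u|^2 - |H|^2 - \langle H,y \rangle - \frac{1}{4} \, |y|^2.\]

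The key step, and the only place that needs care, is to group these terms as completed squares. Since $\nabla^\Sigma f$ and $\nabla^\Sigma u$ are tangential while $H$ and $y$ are normal, there are no cross terms between the two groups, and the right-hand side equals
\[\log f - n - \Big| f^{-1} \, \nabla^\Sigma f + \frac{1}{2} \, \nabla^\Sigma u \Big|^2 - \frac{1}{4} \, |2H + y|^2 \leq \log f - \frac{1}{4} \, |2H+y|^2 - n.\]
Exponentiating and multiplying by the bound $\det D\Phi(x,y) \leq e^{\tau - n}$ gives
\[0 \leq e^{-\frac{1}{4} \, |\Phi(x,y)|^2} \, \det D\Phi(x,y) \leq f(x) \, e^{-\frac{1}{4} \, |2H(x)+y|^2 - n},\]
which is the assertion of Lemma \ref{log.Sobolev.Jacobian.estimate}.
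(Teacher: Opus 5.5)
Your proof is correct and follows essentially the same route as the paper. Both expand the equation (\ref{log.Sobolev.pde.for.u}), complete the two squares $f^{-2}|\nabla^\Sigma f + \frac{1}{2} f \nabla^\Sigma u|^2$ and $\frac{1}{4}|2H+y|^2$, and bound the determinant by $e^{\mathrm{tr} - n}$. The only cosmetic difference is that you apply the arithmetic-geometric mean inequality before using $s \leq e^{s-1}$, whereas the paper applies $\lambda \leq e^{\lambda-1}$ directly to each eigenvalue; your remark that tangential and normal terms are orthogonal is harmless but not needed, since the two squares are completed separately.
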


\textbf{Proof.} 
Let us fix a point $(x,y) \in A$. Using (\ref{log.Sobolev.pde.for.u}), we obtain 
\begin{align*} 
&\Delta_\Sigma u(x) - \langle H(x),y \rangle \\ 
&= \log f(x) - f(x)^{-2} \, |\nabla^\Sigma f(x)|^2 - |H(x)|^2 \\ 
&- f(x)^{-1} \, \langle \nabla^\Sigma f(x),\nabla^\Sigma u(x) \rangle - \langle H(x),y \rangle \\ 
&= \log f(x) + \frac{1}{4} \, (|\nabla^\Sigma u(x)|^2+|y|^2) \\ 
&- \frac{1}{4} \, f(x)^{-2} \, |2 \, \nabla^\Sigma f(x) + f(x) \, \nabla^\Sigma u(x)|^2 - \frac{1}{4} \, |2H(x)+y|^2 \\ 
&\leq \log f(x) + \frac{1}{4} \, (|\nabla^\Sigma u(x)|^2+|y|^2) - \frac{1}{4} \, |2 H(x)+y|^2.
\end{align*}
Moreover, $D_\Sigma^2 u(x) - \langle I\!I(x),y \rangle \geq 0$ since $(x,y) \in A$. Using the elementary inequality $\lambda \leq e^{\lambda-1}$, we obtain 
\begin{align*} 
0 &\leq \det (D_\Sigma^2 u(x) - \langle I\!I(x),y \rangle) \\ 
&\leq e^{\text{\rm tr}(D_\Sigma^2 u(x) - \langle I\!I(x),y \rangle) - n} \\ 
&\leq f(x) \, e^{\frac{1}{4} \, (|\nabla^\Sigma u(x)|^2+|y|^2) - \frac{1}{4} \, |2H(x)+y|^2 - n}.
\end{align*}
Using (\ref{Jacobian.determinant}), we conclude that 
\[0 \leq \det D\Phi(x,y) \leq f(x) \, e^{\frac{1}{4} \, |\Phi(x,y)|^2 - \frac{1}{4} \, |2H(x)+y|^2 - n}.\] 
This completes the proof of Lemma \ref{log.Sobolev.Jacobian.estimate}. \\

Using Lemma \ref{log.Sobolev.Jacobian.estimate} and the fact that $\Phi(A) = \mathbb{R}^{n+m}$, we obtain 
\begin{align*} 
1 &= (4\pi)^{-\frac{n+m}{2}} \int_{\mathbb{R}^{n+m}} e^{-\frac{1}{4} \, |\xi|^2} \, d\xi \\ 
&\leq (4\pi)^{-\frac{n+m}{2}} \int_\Sigma \bigg ( \int_{T_x^\perp \Sigma} e^{-\frac{1}{4} \, |\Phi(x,y)|^2} \, |\det D\Phi(x,y)| \, 1_A(x,y) \, dy \bigg ) \, d\text{\rm vol}(x) \\ 
&\leq (4\pi)^{-\frac{n+m}{2}} \int_\Sigma \bigg ( \int_{T_x^\perp \Sigma} f(x) \, e^{-\frac{1}{4} \, |2H(x)+y|^2 - n} \, dy \bigg ) \, d\text{\rm vol}(x) \\ 
&= (4\pi)^{-\frac{n}{2}} \, e^{-n} \int_\Sigma f(x) \, d\text{\rm vol}(x).
\end{align*}
Rearranging terms gives 
\[n + \frac{n}{2} \, \log(4\pi) \leq \log \Big ( \int_\Sigma f \Big ).\]
Using this estimate together with the normalization condition (\ref{log.Sobolev.normalization}), we conclude that 
\begin{align*} 
&\int_\Sigma f \, \Big ( \log f + n + \frac{n}{2} \, \log(4\pi) \Big ) - \int_\Sigma f^{-1} \, |\nabla^\Sigma f|^2 - \int_\Sigma f \, |H|^2 \\ 
&= \int_\Sigma f \, \Big ( n+\frac{n}{2} \, \log(4\pi) \Big ) \\ 
&\leq \Big ( \int_\Sigma f \Big ) \, \log \Big ( \int_\Sigma f \Big ). 
\end{align*} 
This completes the proof of Theorem \ref{log.Sobolev.inequality} in the special case when $\Sigma$ is connected. 

Finally, if $\Sigma$ is disconnected, we apply the inequality to each connected component of $\Sigma$, and take the sum over all connected components. \\

\section{The Sobolev inequality for manifolds with nonnegative Ricci curvature} 

In this section, we discuss how the isoperimetric inequality in Euclidean space can be generalized to manifolds with nonnegative Ricci curvature. Let $(M,g)$ be a complete noncompact manifold of dimension $n$ with nonnegative Ricci curvature, and let $q$ be an arbitrary point in $M$. It follows from the Bishop-Gromov relative volume comparison theorem that the function 
\[r \mapsto \frac{|\{p \in M: d(p,q) < r\}|}{r^n}\] 
is monotone decreasing. In particular, we can find a real number $\theta \in [0,1]$ such that 
\[\frac{|\{p \in M: d(p,q) < r\}|}{r^n} \to |B^n| \, \theta\] 
as $r \to \infty$. It is easy to see that $\theta$ is independent of the choice of the point $q$. We refer to $\theta$ as the asymptotic volume ratio of $(M,g)$. 

\begin{theorem}[cf. \cite{Agostiniani-Fogagnolo-Mazzieri}, \cite{Brendle3}, \cite{Cordero-Erausquin-McCann-Schmuckenschlager}]
\label{Riemannian.isoperimetric.inequality}
Let $(M,g)$ be a complete noncompact manifold of dimension $n$ with nonnegative Ricci curvature. Let $D$ be a compact domain in $M$ with boundary $\partial D$. Then $|\partial D| \geq n \, |B^n|^{\frac{1}{n}} \, \theta^{\frac{1}{n}} \, |D|^{\frac{n-1}{n}}$, where $\theta$ denotes the asymptotic volume ratio of $(M,g)$.
\end{theorem}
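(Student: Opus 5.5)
We follow the scheme of Section~\ref{Sobolev}, replacing the gradient map by the exponential map and comparing volumes at large scale via the asymptotic volume ratio. We may assume $D$ is connected; otherwise we sum over components, as in the earlier sections. We may also take $\partial D$ smooth, and by the usual approximation we prove the Sobolev version with a positive function $f$. For the isoperimetric statement itself, $f=1$ suffices.

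\textbf{Setup.} Normalize so that $|\partial D| = n|D|$. Solve the Neumann problem $\Delta u = n$ in $D$ with $\langle \nabla u,\eta\rangle = 1$ on $\partial D$; this is solvable precisely because of the normalization. Let $U = \{x \in D\setminus\partial D : |\nabla u(x)|<1\}$. Let $A_r$ be the set of pairs $(x,t)$ with $x\in U$ and $0<t<r$ such that
\[ t\,|\nabla u(x)|\, + \text{(some condition)} \]
— more precisely, such that $t\,D^2u(x) + g \geq 0$ and the point $\exp_x(t\nabla u(x))$ is reached by a curve that minimizes $\tfrac12 d(z,\cdot)^2 + t u$, i.e. $\operatorname{ry}(z) := \min_{x\in D}\big(t u(x) + \tfrac12 d(x,z)^2\big)$ is attained at $x$. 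Define $\Phi_t(x) = \exp_x(t\nabla u(x))$.

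\textbf{Surjectivity onto most of a large ball.} Fix $r>0$. Consider any $p\in M$ with $\sigma r < d(p,D) < r$, or more simply all $p$ with $d(x,p)<r$ for every $x\in D$. Minimize $x\mapsto r u(x) + \tfrac12 d(x,p)^2$ over $D$. The boundary condition $\langle \nabla u,\eta\rangle=1$, together with $|\nabla_x \tfrac12 d^2| = d < r$, forces the minimum $\bar x$ into the interior. At $\bar x$ we get $p = \exp_{\bar x}(r\nabla u(\bar x))$, the geodesic $s\mapsto \exp_{\bar x}(s\nabla u(\bar x))$ is minimizing on $[0,r]$, and second-order conditions hold there. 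So every such $p$ lies in $\Phi_r(A_r)$, and the set of such $p$ contains $\{p: d(p,q) < r - \mathrm{diam}\}$-type regions whose volume is $\geq |B^n|\theta (r-C)^n$ by the definition of $\theta$.

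\textbf{Jacobian estimate.} This is the main step. Along the minimizing geodesic $\gamma(s)=\exp_{\bar x}(s\nabla u)$, $0\le s\le r$, write the Jacobian of $\Phi_s$ through Jacobi fields $E_i(s)$ with $E_i(0)=e_i$ and $E_i'(0) = D^2u(e_i)$. Let $P(s)$ be the matrix $\langle E_i(s),E_j(s)\rangle$. Minimality of the geodesic and the second-order condition at $\bar x$ ensure $P$ stays nondegenerate on $(0,r)$. The Riccati equation $Q' + Q^2 + R = 0$ for $Q = E'E^{-1}$, together with $\mathrm{Ric}\geq 0$, gives $(\log\det E)'' \le -\tfrac1n \big((\log\det E)'\big)^2$. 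Hence $\frac{d}{ds}(\log \det E)\leq \frac{n}{s + n/\Delta u}$, which integrates to
\[ \det D\Phi_r(\bar x) \le \Big(1 + \frac{r\,\Delta u}{n}\Big)^n = (1+r)^n. \]
This is the analogue of Lemma~\ref{Sobolev.Jacobian.estimate}. The obstacle is justifying the comparison, namely that the Jacobi fields do not degenerate before time $r$ on the contact set. For this we use that $\bar x$ minimizes $r u + \frac12 d(\cdot,p)^2$, so $r D^2 u + \nabla^2 \tfrac12 d_p^2 \geq 0$ at $\bar x$. We then apply the index form comparison to control the Hessian of $\tfrac12 d_p^2$.

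\textbf{Conclusion.} By the area formula,
\[ |B^n|\,\theta\,(r-C)^n \le \int_U \det D\Phi_r \le (1+r)^n |D|. \]
Dividing by $r^n$ and letting $r\to\infty$ gives $|B^n|\theta \le |D|$. With the normalization $|\partial D| = n|D|$ this yields
\[ |\partial D| = n|D| \geq n|B^n|^{1/n}\theta^{1/n}|D|^{(n-1)/n}. \]
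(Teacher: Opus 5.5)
Your proposal is essentially the paper's proof (Theorem~\ref{Riemannian.Sobolev.inequality} with $f\equiv 1$): the same Neumann problem, the same map $\Phi_r(x)=\exp_x(r\nabla u(x))$ and contact set, the same surjectivity onto $\{p:\sup_{x\in D}d(x,p)<r\}$, the same Riccati/$\mathrm{Ric}\geq 0$ bound $|\det D\Phi_r|\leq(1+r)^n$ on the contact set, and the same limit $r\to\infty$. When you write it up: drop the condition $t\,D^2u+g\geq 0$ from $A_r$, since minimality of $\bar x$ does not give it under $\mathrm{Ric}\geq 0$ alone and it is never used; integrate over $A_r$ rather than $U$; justify $|\partial D|=n|D|$ by rescaling the metric, which preserves $\mathrm{Ric}\geq 0$ and $\theta$; note that $Q$ is symmetric, which is needed for $\mathrm{tr}(Q^2)\geq\frac1n(\mathrm{tr}\,Q)^2$; and get nondegeneracy as in Lemmas~\ref{Riemannian.Sobolev.second.variation} and~\ref{Riemannian.Sobolev.no.conjugate.points}, using the path functional $u(\gamma(0))+\frac12\int_0^r|\gamma'|^2\,dt$ (which avoids differentiating $d_p^2$ at a possible cut point) followed by a broken-Jacobi-field argument.
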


In the three-dimensional case, Theorem \ref{Riemannian.isoperimetric.inequality} was proved by Agostiniani, Fogagnolo, and Mazzieri (cf. \cite{Agostiniani-Fogagnolo-Mazzieri}, Theorem 6.1). Their proof is based on an argument due to Huisken \cite{Huisken} and uses mean curvature flow. In \cite{Brendle3}, we gave a proof of Theorem \ref{Riemannian.isoperimetric.inequality} in all dimensions based on the Alexandrov-Bakelman-Pucci technique. Finally, Balogh and Krist\'aly \cite{Balogh-Kristaly} observed that Theorem \ref{Riemannian.isoperimetric.inequality} can alternatively be deduced from the Riemannian interpolation inequality of Cordero-Erausquin, McCann, and Schmuckenschl\"ager \cite{Cordero-Erausquin-McCann-Schmuckenschlager}. 

Theorem \ref{Riemannian.isoperimetric.inequality} is a special case of the following Sobolev inequality. 

\begin{theorem}
\label{Riemannian.Sobolev.inequality}
Let $(M,g)$ be a complete noncompact manifold of dimension $n$ with nonnegative Ricci curvature. Let $D$ be a compact domain in $M$ with boundary $\partial D$, and let $f$ be a positive smooth function on $D$. Then 
\[\int_D |\nabla f| + \int_{\partial D} f \geq n \, |B^n|^{\frac{1}{n}} \, \theta^{\frac{1}{n}} \, \Big ( \int_D f^{\frac{n}{n-1}} \Big )^{\frac{n-1}{n}},\] 
where $\theta$ denotes the asymptotic volume ratio of $(M,g)$.
\end{theorem}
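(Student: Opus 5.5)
We follow the argument of Section \ref{Sobolev}, replacing the gradient map by the exponential map. The comparison that produced $|B^n|$ there now produces $|B^n|\,\theta$ through an asymptotic volume count at large radius.

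\textbf{Setup.} First assume $D$ is connected, and normalize $f$ so that
\[\int_D |\nabla f| + \int_{\partial D} f = n \int_D f^{\frac{n}{n-1}}.\]
Solve $\text{div}(f\,\nabla u) = n f^{\frac{n}{n-1}} - |\nabla f|$ in $D$ with $\langle \nabla u,\eta\rangle = 1$ on $\partial D$. This Neumann problem is solvable precisely because of the normalization, and $u \in C^{2,\gamma}$. Define
\[U := \{x \in D\setminus\partial D : |\nabla u(x)|<1\}.\]
For each $r>0$ let
\[A_r := \{\bar x \in U : r u(x) + \tfrac12 d(x,\exp_{\bar x}(r\nabla u(\bar x)))^2 \geq r u(\bar x) + \tfrac12 r^2 |\nabla u(\bar x)|^2 \ \text{for all } x \in D\}.\]
Let $\Phi_r : U \to M$ be $\Phi_r(x) = \exp_x(r\nabla u(x))$.

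\textbf{Surjectivity.} This replaces Lemma \ref{Sobolev.surjectivity}. The claim is that
\[\{p \in M : \sigma r < d(x,p) < r \text{ for all } x \in D\} \subset \Phi_r(A_r)\]
for every $0\le \sigma<1$. Given such a $p$, minimize $w(x) := r u(x) + \tfrac12 d(x,p)^2$ over $D$. The gradient of $\tfrac12 d(\cdot,p)^2$ has length $d(x,p) < r$, so the Neumann condition forces the minimum to an interior point $\bar x$. By the usual Calabi trick, a minimal geodesic from $\bar x$ to $p$ lets us treat $d(\cdot,p)^2$ as smooth at $\bar x$. At the minimum, $p = \exp_{\bar x}(r\nabla u(\bar x))$ and $|\nabla u(\bar x)| < 1$, so $\bar x \in A_r$. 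Moreover, $t\mapsto \exp_{\bar x}(t\nabla u(\bar x))$ is minimizing on $[0,r]$, and
\[D^2 u(\bar x) + \tfrac1r D^2\big(\tfrac12 d(\cdot,p)^2\big)(\bar x) \ge 0.\]

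\textbf{Jacobian estimate.} This is the main obstacle, replacing Lemma \ref{Sobolev.Jacobian.estimate}. For $\bar x\in A_r$ we want
\[|\det D\Phi_r(\bar x)| \le r^n f(\bar x)^{\frac{n}{n-1}}.\]
Fix an orthonormal frame $e_i$ at $\bar x$ and let $\gamma(t)=\exp_{\bar x}(t\nabla u(\bar x))$. Let $E_i(t)$ be the Jacobi fields along $\gamma$ with $E_i(0)=e_i$ and $E_i'(0) = \sum_j (D^2u)(e_i,e_j)e_j$. Then $D\Phi_r(e_i)$ is $E_i(r)$, after rescaling time so that $\dot\gamma$ matches.

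Since $\gamma$ is minimizing on $[0,r]$ there are no conjugate points before $r$, so the matrix $P(t)$ defined by $E_i(t) = P(t)_{ij}$ (in a parallel frame) stays invertible for $t<r$. The matrix $Q = P^{-1}P'$ is symmetric and satisfies the Riccati equation
\[Q' + Q^2 + R = 0, \qquad Q(0) = D^2u(\bar x).\]
The minimizing property at $\bar x$ is used to show that $P(t)$ remains nondegenerate, with $\det P(t) \ge 0$ for $t<r$. Taking traces and using $\text{Ric}\ge 0$ together with Cauchy--Schwarz gives
\[(\operatorname{tr} Q)' \le -\tfrac1n (\operatorname{tr} Q)^2.\]
Hence $(\log\det P)' = \operatorname{tr} Q$, and the Riccati comparison yields
\[\det P(t) \le \Big(1 + \tfrac{t}{n}\Delta u(\bar x)\Big)^n.\]
The PDE and $|\nabla u|<1$ give $\Delta u \le n f^{\frac{1}{n-1}}$, exactly as in Lemma \ref{Sobolev.Jacobian.estimate}. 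Therefore
\[|\det D\Phi_r(\bar x)| \le \big(1 + r f(\bar x)^{\frac1{n-1}}\big)^n.\]
This leading-order bound in $r$ is sufficient.

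\textbf{Conclusion.} By the area formula,
\[\big|\{p : \sigma r < d(x,p) < r\ \forall x\in D\}\big| \le \int_U \big(1 + r f^{\frac{1}{n-1}}\big)^n .\]
The left side is at least $|B^n|\,\theta\,(1-\sigma^n)\,r^n - o(r^n)$. The upper bound uses Bishop--Gromov monotonicity, and the lower bound uses $\theta$ as the limit of the volume ratio; fixing a base point in $D$ shifts radii only by $\text{diam}\,D$. Dividing by $r^n$, letting $r\to\infty$, and then $\sigma\to 0$ gives
\[|B^n|\,\theta \le \int_D f^{\frac{n}{n-1}}.\]
Combining this with the normalization proves the inequality for connected $D$. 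For disconnected $D$, sum over the components as in Section \ref{Sobolev}.

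The delicate step is the Jacobi field analysis. It must show that $\det P$ has a sign and no conjugate point occurs before time $r$. I would try to obtain both from the second-order minimality condition at $\bar x$, via a comparison of the index form with the function $w$.
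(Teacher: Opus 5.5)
Your overall architecture is the same as the paper's: normalization, the Neumann problem, the sets $A_r$ and maps $\Phi_r$, surjectivity by minimizing $r\,u+\frac12 d(\cdot,p)^2$, Riccati comparison for $\text{\rm tr}\,Q$, and the area formula with $r\to\infty$. However, the step you flag as delicate is not established, and the justification you give for it in the body is wrong. You write that since $\gamma$ is minimizing on $[0,r]$ there are no conjugate points, so $P(t)$ stays invertible for $t<r$. Conjugate points concern Jacobi fields that vanish at $t=0$. Your fields have $X_i(0)=e_i$ and $D_tX_i(0)=\sum_j (D^2u)(e_i,e_j)\,e_j$, so what must be ruled out is a focal point of this initial data. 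In $\mathbb{R}^n$ every segment is minimizing and free of conjugate points, yet $P(t)=\text{\rm id}+t\,D^2u(\bar x)$ is singular at $t=-1/\lambda$ for any negative eigenvalue $\lambda$ of $D^2u(\bar x)$. What prevents this before time $r$ is the condition $D^2u(\bar x)\geq -\frac1r\,\text{\rm id}$ coming from $\bar x\in A_r$, not any property of the geodesic. Without invertibility on all of $(0,r)$, $Q=P^{-1}P'$ is not defined up to $t=r$, and the comparison says nothing about $\det P(r)$.

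The missing argument is in the paper's two lemmas preceding the monotonicity proposition, and it uses the full minimality built into $A_r$, not just the pointwise Hessian inequality at $\bar x$. For $\bar x\in A_r$, the geodesic $\bar\gamma(t)=\exp_{\bar x}(t\nabla u(\bar x))$ minimizes $u(\gamma(0))+\frac12\int_0^r|\gamma'|^2\,dt$ among all paths with $\gamma(0)\in D$ and $\gamma(r)=\bar\gamma(r)$. Phrasing it via paths also sidesteps the non-smoothness of $d(\cdot,p)^2$. The second variation then gives
\[(D^2u)(Z(0),Z(0))+\int_0^r\big(|D_tZ|^2-R(\bar\gamma',Z,\bar\gamma',Z)\big)\,dt\geq 0\]
for every $Z$ along $\bar\gamma$ with $Z(r)=0$ and $Z(0)$ free. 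Suppose a nonzero $W=\sum_i c_iX_i$ vanished at some $\tau\in(0,r)$. Let $\tilde W=W$ on $[0,\tau]$ and $\tilde W=0$ on $[\tau,r]$. Integrating by parts with $D_tW(0)=\sum_j (D^2u)(W(0),e_j)\,e_j$ shows the left side is exactly $0$ for $\tilde W$. Since $D_tW(\tau)\neq 0$, $\tilde W$ has a corner, so a smooth $Z$ with the same endpoint values makes the left side negative, which is a contradiction. Once invertibility on $(0,r)$ is known, $\det P>0$ follows by continuity from $P(0)=\text{\rm id}$, and your Riccati argument goes through.

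Two smaller points. First, the bound $r^nf^{\frac{n}{n-1}}$ you first announce is not what you prove and fails for small $r$; the bound $(1+rf^{\frac{1}{n-1}})^n$ is what you get and it suffices. Second, to pass from $\det P(t)\leq(1+\frac tn\Delta u(\bar x))^n$ to $(1+tf(\bar x)^{\frac{1}{n-1}})^n$ you need $1+\frac tn\Delta u(\bar x)\geq 0$. This again follows from invertibility on $(0,r)$. Alternatively, compare $\text{\rm tr}\,Q$ directly with the solution starting at $nf(\bar x)^{\frac{1}{n-1}}$, as the paper does.
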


The Michael-Simon Sobolev inequality can also be generalized to the Riemannian setting, assuming that the ambient manifold has nonnegative sectional curvature (cf. \cite{Brendle3}).

In the remainder of this section, we describe the proof of Theorem \ref{Riemannian.Sobolev.inequality} based on the Alexandrov-Bakelman-Pucci maximum principle. We will follow the arguments in \cite{Brendle3}. 

We first consider the special case that $D$ is connected. By multiplying $f$ by a suitable constant, we may arrange that 
\begin{equation} 
\label{Riemannian.Sobolev.normalization}
\int_D |\nabla f| + \int_{\partial D} f = n \int_D f^{\frac{n}{n-1}}. 
\end{equation}
Since $D$ is connected, we can find a function $u: D \to \mathbb{R}$ such that 
\begin{equation} 
\label{Riemannian.Sobolev.pde.for.u}
\text{\rm div}(f \, \nabla u) = n \, f^{\frac{n}{n-1}} - |\nabla f| 
\end{equation}
at each point in $D$ and 
\begin{equation} 
\label{Riemannian.Sobolev.boundary.condition.for.u}
\langle \nabla u,\eta \rangle = 1 
\end{equation}
at each point on $\partial D$, where $\eta$ denotes the outward-pointing unit normal to $\partial D$. The function $u$ is of class $C^{2,\gamma}$ for each $0 < \gamma < 1$. We define 
\[U := \{x \in D \setminus \partial D: |\nabla u(x)| < 1\}.\] 
For each $r > 0$, we denote by $A_r$ the set of all points $\bar{x} \in U$ with the property that 
\[r \, u(x) + \frac{1}{2} \, d \big ( x,\exp_{\bar{x}}(r \, \nabla u(\bar{x})) \big )^2 \geq r \, u(\bar{x}) + \frac{1}{2} \, r^2 \, |\nabla u(\bar{x})|^2\] 
for all $x \in D$. Moreover, for each $r > 0$, we define a map $\Phi_r: U \to M$ by 
\[\Phi_r(x) = \exp_x(r \, \nabla u(x))\] 
for all $x \in U$.

\begin{lemma} 
\label{Laplacian}
Assume that $x \in U$. Then $\Delta u(x) \leq n \, f(x)^{\frac{1}{n-1}}$. 
\end{lemma}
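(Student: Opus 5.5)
This is the Riemannian analogue of the first step of Lemma \ref{Sobolev.Jacobian.estimate}. The only new ingredient is that the divergence and Laplacian are taken with respect to $g$, and the product rule still holds. The plan is to expand (\ref{Riemannian.Sobolev.pde.for.u}), divide by $f$, and use Cauchy--Schwarz together with the defining property $|\nabla u(x)|<1$ of $U$.

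First, fix $x \in U$. Since $f$ is positive and smooth and $u$ is of class $C^{2,\gamma}$, the product rule gives
\[\text{\rm div}(f \, \nabla u) = f \, \Delta u + \langle \nabla f,\nabla u \rangle\]
at $x$. Inserting this into (\ref{Riemannian.Sobolev.pde.for.u}) and dividing by $f(x)>0$ yields
\[\Delta u(x) = n \, f(x)^{\frac{1}{n-1}} - f(x)^{-1} \, |\nabla f(x)| - f(x)^{-1} \, \langle \nabla f(x),\nabla u(x) \rangle.\]

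Next, the Cauchy--Schwarz inequality in $(T_xM,g)$ gives
\[-\langle \nabla f(x),\nabla u(x) \rangle \leq |\nabla f(x)| \, |\nabla u(x)| \leq |\nabla f(x)|,\]
where the last step uses $|\nabla u(x)|<1$ because $x \in U$. Hence the last two terms in the expression for $\Delta u(x)$ have nonpositive sum. This gives $\Delta u(x) \leq n \, f(x)^{\frac{1}{n-1}}$.

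There is no real obstacle here; the computation is pointwise and uses no curvature assumption. The only point to check is that $x$ lies in the interior of $D$, so that the equation holds classically there, and this is built into the definition of $U$. The curvature hypothesis will enter only later, in the Jacobian estimate for $\Phi_r$.
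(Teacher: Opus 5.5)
Your proposal is correct and matches the paper's proof: expand $\text{\rm div}(f \, \nabla u) = f \, \Delta u + \langle \nabla f,\nabla u \rangle$, divide by $f(x)>0$, and bound $-\langle \nabla f(x),\nabla u(x) \rangle \leq |\nabla f(x)|$ using Cauchy--Schwarz and $|\nabla u(x)|<1$. The paper does exactly this computation.
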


\textbf{Proof.} 
Let us fix a point $x \in U$. Using (\ref{Riemannian.Sobolev.pde.for.u}), we obtain 
\begin{align*} 
\Delta u(x) 
&= n \, f(x)^{\frac{1}{n-1}} - f(x)^{-1} \, |\nabla f(x)| - f(x)^{-1} \, \langle \nabla f(x),\nabla u(x) \rangle \\ 
&\leq n \, f(x)^{\frac{1}{n-1}}. 
\end{align*}
This completes the proof of Lemma \ref{Laplacian}. \\

\begin{lemma} 
\label{Riemannian.Sobolev.surjectivity}
For each $r>0$, the set 
\[\Big \{ p \in M: \sup_{x \in D} d(x,p) < r \Big \}\] 
is contained in the image $\Phi_r(A_r)$. 
\end{lemma}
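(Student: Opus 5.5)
Fix $r>0$ and a point $p \in M$ with $\sup_{x \in D} d(x,p) < r$. Following the proof of Lemma \ref{Sobolev.surjectivity}, we minimize the function
\[w(x) := r \, u(x) + \frac{1}{2} \, d(x,p)^2\]
over the compact set $D$, and let $\bar{x}$ be a point where the minimum is attained.

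\textbf{Step 1: $\bar{x}$ is an interior point.} Suppose instead that $\bar{x} \in \partial D$. Let $\gamma$ be a minimizing geodesic from $\bar{x}$ to $p$; its length is $d(\bar{x},p) < r$. Moving from $\bar{x}$ along $-\eta$, the function $r\,u$ decreases at rate $r\langle \nabla u,\eta\rangle = r$, by the boundary condition (\ref{Riemannian.Sobolev.boundary.condition.for.u}). The distance-squared term $\frac{1}{2}\,d(\cdot,p)^2$ is semiconcave and has a superdifferential containing $-\gamma'(0)\,d(\bar{x},p)$, a vector of length $< r$. Hence the one-sided derivative of $\frac{1}{2}\,d(\cdot,p)^2$ in direction $-\eta$ is at most $d(\bar{x},p) < r$. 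So $w$ strictly decreases along an inward curve from $\bar{x}$, contradicting minimality. Thus $\bar{x} \in D \setminus \partial D$.

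\textbf{Step 2: first-order condition.} Let $\gamma:[0,1]\to M$ be a minimizing geodesic from $\bar{x}$ to $p$, so that $\gamma'(0)$ has length $d(\bar{x},p)$. Since $\frac{1}{2}\,d(x,p)^2 \le \frac{1}{2}\,L(\sigma)^2$ for any curve $\sigma$ from $x$ to $p$, we may perturb $\gamma$ near $\bar{x}$ to obtain a smooth upper barrier $\psi \ge \frac{1}{2}\,d(\cdot,p)^2$ with equality at $\bar{x}$ and $\nabla\psi(\bar{x}) = -\gamma'(0)$. Then $r\,u + \psi$ has an interior minimum at $\bar{x}$, which gives $r\,\nabla u(\bar{x}) = \gamma'(0)$. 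Consequently $\exp_{\bar{x}}(r\,\nabla u(\bar{x})) = p$, and $r\,|\nabla u(\bar{x})| = d(\bar{x},p) < r$, so $\bar{x} \in U$ and $\Phi_r(\bar{x}) = p$.

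\textbf{Step 3: $\bar{x} \in A_r$.} Minimality of $w$ at $\bar{x}$ reads
\[r\,u(x) + \frac{1}{2}\,d(x,p)^2 \ge r\,u(\bar{x}) + \frac{1}{2}\,d(\bar{x},p)^2 \quad \text{for all } x \in D.\]
Substituting $p = \exp_{\bar{x}}(r\,\nabla u(\bar{x}))$ and $d(\bar{x},p)^2 = r^2\,|\nabla u(\bar{x})|^2$ gives exactly the defining inequality of $A_r$. Hence $p \in \Phi_r(A_r)$.

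The main obstacle is Step 1, because $d(\cdot,p)^2$ need not be differentiable at $\bar{x}$ (for instance, if $\bar{x}$ lies in the cut locus of $p$). The cleanest remedy is the upper-barrier idea from Step 2, applied at the boundary: take $\psi(x) = \frac{1}{2}\,d(x,\gamma(\epsilon))^2$-type comparisons, or the triangle inequality $d(x,p) \le d(x,\gamma(\epsilon)) + d(\gamma(\epsilon),p)$ with $\epsilon$ small, so that $\gamma(\epsilon)$ is not conjugate to $x$ near $\bar{x}$. This yields a smooth barrier touching from above with gradient of length $d(\bar{x},p) < r$, and the strict inequality then beats the boundary derivative $r$.
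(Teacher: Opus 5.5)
Your proof is correct and follows the paper's argument. You minimize $r\,u + \frac{1}{2}\,d(\cdot,p)^2$ over $D$, use $\langle \nabla u,\eta \rangle = 1$ together with $d(\bar{x},p) < r$ to exclude a boundary minimum, and identify $r\,\nabla u(\bar{x})$ with the initial velocity of a minimizing geodesic from $\bar{x}$ to $p$. Your triangle-inequality upper barrier is a concrete form of the paper's first-variation argument for the functional $u(\gamma(0)) + \frac{1}{2}\int_0^r |\gamma'(t)|^2\,dt$, and it also fills in the boundary step that the paper states without detail.
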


\textbf{Proof.} 
Let us fix a real number $r>0$. Let $p$ be a point in $M$ with the property that $\sup_{x \in D} d(x,p) < r$. We can find a point $\bar{x} \in D$ such that 
\[r \, u(\bar{x}) + \frac{1}{2} \, d(\bar{x},p)^2 = \inf_{x \in D} (r \, u(x) + \frac{1}{2} \, d(x,p)^2).\]
Using (\ref{Riemannian.Sobolev.boundary.condition.for.u}) and the inequality $\sup_{x \in D} d(x,p) < r$, we conclude that the point $\bar{x}$ lies in the interior of $D$. Let $\bar{\gamma}: [0,r] \to M$ be a minimizing geodesic such that $\bar{\gamma}(0) = \bar{x}$ and $\bar{\gamma}(r) = p$. Clearly, $r \, |\bar{\gamma}'(t)| = d(\bar{x},p)$ for all $t \in [0,r]$. Moreover, 
\begin{align*} 
r \, u(\gamma(0)) + \frac{1}{2} \, r \int_0^r |\gamma'(t)|^2 \, dt 
&\geq r \, u(\gamma(0)) + \frac{1}{2} \, d(\gamma(0),p)^2 \\ 
&\geq r \, u(\bar{x}) + \frac{1}{2} \, d(\bar{x},p)^2 
\end{align*}
for every smooth path $\gamma: [0,r] \to M$ satisfying $\gamma(0) \in D$ and $\gamma(r) = p$. Therefore, the path $\bar{\gamma}$ minimizes the functional $u(\gamma(0)) + \frac{1}{2} \int_0^r |\gamma'(t)|^2 \, dt$ among all smooth paths $\gamma: [0,r] \to M$ satisfying $\gamma(0) \in D$ and $\gamma(r) = p$. The first variation formula implies that 
\[\nabla u(\bar{x}) = \bar{\gamma}'(0).\] 
Consequently, 
\[\exp_{\bar{x}}(r \, \nabla u(\bar{x})) = \exp_{\bar{\gamma}(0)}(r \, \bar{\gamma}'(0)) = \bar{\gamma}(r) = p.\] 
Moreover, 
\[r \, |\nabla u(\bar{x})| = r \, |\bar{\gamma}'(0)| = d(\bar{x},p).\] 
Since $d(\bar{x},p) < r$, it follows that $|\nabla u(\bar{x})| < 1$. Therefore, $\bar{x} \in U$. Finally, 
\begin{align*} 
r \, u(x) + \frac{1}{2} \, d \big ( x,\exp_{\bar{x}}(r \, \nabla u(\bar{x})) \big )^2 
&= r \, u(x) + \frac{1}{2} \, d(x,p)^2 \\ 
&\geq r \, u(\bar{x}) + \frac{1}{2} \, d(\bar{x},p)^2 \\ 
&= r \, u(\bar{x}) + \frac{1}{2} \, r^2 \, |\nabla u(\bar{x})|^2 
\end{align*} 
for each point $x \in D$. Thus, $\bar{x} \in A_r$ and $\Phi_r(\bar{x}) = p$. This completes the proof of Lemma \ref{Riemannian.Sobolev.surjectivity}. \\

\begin{lemma} 
\label{Riemannian.Sobolev.second.variation}
Assume that $r>0$ and $\bar{x} \in A_r$. We define $\bar{\gamma}(t) := \exp_{\bar{x}}(t \, \nabla u(\bar{x}))$ for all $t \in [0,r]$. If $Z$ is a smooth vector field along $\bar{\gamma}$ satisfying $Z(r) = 0$, then 
\[(D^2 u)(Z(0),Z(0)) + \int_0^r \big ( |D_t Z(t)|^2 - R(\bar{\gamma}'(t),Z(t),\bar{\gamma}'(t),Z(t)) \big ) \, dt \geq 0.\]
\end{lemma}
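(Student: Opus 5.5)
The plan is a standard second variation argument. For $\bar{x} \in A_r$, every path starting in $D$ and ending at $\bar{p} := \exp_{\bar{x}}(r\,\nabla u(\bar{x}))$ competes against $\bar{\gamma}$, and $\bar{\gamma}$ minimizes. Concretely, for any smooth path $\gamma:[0,r]\to M$ with $\gamma(0)\in D$ and $\gamma(r)=\bar{p}$, the definition of $A_r$ together with $d(\gamma(0),\bar{p})^2 \le r\int_0^r|\gamma'|^2\,dt$ gives
\[r\,u(\gamma(0)) + \frac{1}{2}\,r\int_0^r |\gamma'(t)|^2\,dt \geq r\,u(\bar{x}) + \frac{1}{2}\,r^2\,|\nabla u(\bar{x})|^2.\]
The right-hand side equals the value of the same functional at $\bar{\gamma}$. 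Indeed, $\bar{\gamma}$ is a geodesic with $|\bar{\gamma}'|=|\nabla u(\bar{x})|$, so $\frac12 r\int_0^r|\bar\gamma'|^2 = \frac12 r^2|\nabla u(\bar x)|^2$. Hence $\bar{\gamma}$ minimizes
\[E(\gamma)=u(\gamma(0))+\frac12\int_0^r|\gamma'(t)|^2\,dt\]
among such paths. Note that $\bar x$ is an interior point of $D$, since $\bar{x}\in U$.

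Next, given $Z$ with $Z(r)=0$, I would construct a variation $\gamma_s(t)=\exp_{\bar{\gamma}(t)}(s\,Z(t))$. For small $|s|$ we have $\gamma_s(0)=\exp_{\bar x}(sZ(0))\in D$ because $\bar x$ is interior, and $\gamma_s(r)=\bar p$ because $Z(r)=0$. Then $s\mapsto E(\gamma_s)$ has a minimum at $s=0$, so its second derivative at $s=0$ is nonnegative.

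The computation is where care is needed. Since $s\mapsto \gamma_s(0)$ is a geodesic with initial velocity $Z(0)$, the second derivative of the first term is exactly $(D^2u)(Z(0),Z(0))$; the exponential-map variation is chosen precisely so that no extra term $\langle \nabla u, D_s\partial_s\gamma\rangle$ appears. For the energy term I use the standard second variation formula
\[\frac{d^2}{ds^2}\Big|_{s=0}\frac12\int_0^r|\partial_t\gamma_s|^2\,dt=\int_0^r\big(|D_tZ|^2-R(\bar\gamma',Z,\bar\gamma',Z)\big)\,dt+\Big[\langle D_s\partial_s\gamma,\bar\gamma'\rangle\Big]_0^r.\]
The boundary term vanishes: the curves $s\mapsto\gamma_s(t)$ are geodesics for every fixed $t$, so $D_s\partial_s\gamma=0$ at $s=0$ (alternatively, at $t=r$ the variation is trivial). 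The only real obstacle is keeping track of these acceleration and boundary terms, and the choice of variation handles it. Adding the two second derivatives gives the claimed inequality.
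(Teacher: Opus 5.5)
Your proposal is correct and follows the paper's argument. Like the paper, you combine Cauchy--Schwarz with the definition of $A_r$ to show that $\bar\gamma$ minimizes $u(\gamma(0))+\frac12\int_0^r|\gamma'(t)|^2\,dt$ among paths with $\gamma(0)\in D$ and $\gamma(r)=\bar\gamma(r)$, and then apply the second variation formula. Your explicit variation $\gamma_s(t)=\exp_{\bar\gamma(t)}(s\,Z(t))$ removes the acceleration terms in both the Hessian of $u$ and the endpoint term, which fills in details the paper leaves implicit.
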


\textbf{Proof.} 
Since $\bar{x} \in A_r$, we obtain 
\begin{align*} 
r \, u(\gamma(0)) + \frac{1}{2} \, r \int_0^r |\gamma'(t)|^2 \, dt 
&\geq r \, u(\gamma(0)) + \frac{1}{2} \, d(\gamma(0),\bar{\gamma}(r))^2 \\ 
&\geq r \, u(\bar{x}) + \frac{1}{2} \, r^2 \, |\nabla u(\bar{x})|^2 
\end{align*} 
for every smooth path $\gamma: [0,r] \to M$ satisfying $\gamma(0) \in D$ and $\gamma(r) = \bar{\gamma}(r)$. Therefore, the path $\bar{\gamma}$ minimizes the functional $u(\gamma(0)) + \frac{1}{2} \int_0^r |\gamma'(t)|^2 \, dt$ among all smooth paths $\gamma: [0,r] \to M$ satisfying $\gamma(0) \in D$ and $\gamma(r) = \bar{\gamma}(r)$. Hence, the assertion follows from the second variation formula. This completes the proof of Lemma \ref{Riemannian.Sobolev.second.variation}. \\

\begin{lemma} 
\label{Riemannian.Sobolev.no.conjugate.points}
Assume that $r>0$ and $\bar{x} \in A_r$. We define $\bar{\gamma}(t) := \exp_{\bar{x}}(t \, \nabla u(\bar{x}))$ for all $t \in [0,r]$. Suppose that $W(t)$ is a Jacobi field along $\bar{\gamma}$ with the property that $D_t W(0) = \sum_{j=1}^n (D^2 u)(W(0),e_j) \, e_j$, where $\{e_1,\hdots,e_n\}$ denotes an orthonormal basis of $T_{\bar{x}} M$. If $W(\tau) = 0$ for some $\tau \in (0,r)$, then $W$ vanishes identically.
\end{lemma}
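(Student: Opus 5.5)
The plan is to feed a cut-off version of $W$ into the second-variation inequality of Lemma \ref{Riemannian.Sobolev.second.variation}. Together with the Jacobi equation, this produces a nonnegative quadratic form that vanishes on a test field with a corner at $\tau$; perturbing that test field then forces $D_t W(\tau)=0$, and uniqueness for the Jacobi ODE gives $W \equiv 0$.

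First, the initial condition says $D_t W(0) = (D^2u)(W(0),\cdot)^\sharp$. Hence
\[\langle D_t W(0), W(0)\rangle = (D^2 u)(W(0),W(0)).\]

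Next, define a vector field $Z$ along $\bar\gamma$ by $Z(t) = W(t)$ for $t \in [0,\tau]$ and $Z(t) = 0$ for $t \in [\tau,r]$. It is continuous and piecewise smooth, and $Z(r)=0$. Lemma \ref{Riemannian.Sobolev.second.variation} is stated for smooth $Z$, but it extends to piecewise smooth fields by approximation, since the index form only involves first derivatives.

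Now evaluate the quadratic form
\[I(Z) := (D^2 u)(Z(0),Z(0)) + \int_0^r \big( |D_t Z|^2 - R(\bar\gamma',Z,\bar\gamma',Z) \big)\, dt\]
on this $Z$. Integrate by parts on $[0,\tau]$ and use the Jacobi equation $D_t^2 W + R(W,\bar\gamma')\bar\gamma' = 0$. The interior integral drops out, leaving
\[\langle D_tW(\tau),W(\tau)\rangle - \langle D_tW(0),W(0)\rangle.\]
The first term vanishes because $W(\tau)=0$. The second cancels against $(D^2u)(W(0),W(0))$ by the first step. Thus $I(Z)=0$.

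Then I would exploit the fact that $Z$ attains the minimum value $0$ of the nonnegative quadratic form $I$ on the space of piecewise smooth fields $V$ with $V(r)=0$. For any such $V$, the function $s\mapsto I(Z+sV)$ is a nonnegative quadratic polynomial vanishing at $s=0$, so its derivative there is zero, i.e.\ $I(Z,V)=0$. Compute this bilinear form by integrating by parts on $[0,\tau]$ and $[\tau,r]$. Because $W$ is Jacobi, and using the initial condition on $W$, which is exactly the natural boundary condition at $t=0$ paired with $D^2u$, only the jump term at $t=\tau$ survives:
\[I(Z,V) = \langle D_t W(\tau), V(\tau)\rangle.\]
Choosing $V$ with $V(\tau) = D_tW(\tau)$ and $V(r)=0$ gives $D_t W(\tau)=0$. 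Since also $W(\tau)=0$, uniqueness for the Jacobi ODE gives $W\equiv 0$.

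The main obstacle is twofold. One part is justifying the extension of Lemma \ref{Riemannian.Sobolev.second.variation} to piecewise smooth fields, which I would handle by smoothing $Z$ near $\tau$ and passing to the limit in $H^1$. The other is the bookkeeping in the first-variation computation. In particular, the symmetric $D^2u$ term at $t=0$ must match exactly the boundary term $-\langle D_tW(0),V(0)\rangle$. This matching is exactly where the prescribed initial condition for $D_tW(0)$ is used.
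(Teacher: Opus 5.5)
Your proposal is correct and follows essentially the same route as the paper: the cut-off field (equal to $W$ on $[0,\tau]$ and $0$ on $[\tau,r]$) gives the value $0$ in the quadratic form of Lemma \ref{Riemannian.Sobolev.second.variation}, by integration by parts, the Jacobi equation and the prescribed $D_tW(0)$, and the corner at $\tau$ then contradicts that lemma. The paper argues by contradiction, asserting that a corner with $D_tW(\tau)\neq 0$ can be smoothed to a field with strictly negative value. Your first-variation computation $I(Z,V)=\langle D_tW(\tau),V(\tau)\rangle$, together with the approximation by smooth fields, is just an explicit version of that smoothing step.
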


\textbf{Proof.} 
We argue by contradiction. Suppose that $W(\tau) = 0$ for some $\tau \in (0,r)$, but $W$ does not vanish identically. Using standard uniqueness results for ODE, we conclude that $D_t W(\tau) \neq 0$. Since $W$ is a Jacobi field, we obtain 
\begin{align*} 
&\int_0^\tau \big ( |D_t W(t)|^2 - R(\bar{\gamma}'(t),W(t),\bar{\gamma}'(t),W(t)) \big ) \, dt \\ 
&= \langle D_t W(\tau),W(\tau) \rangle - \langle D_t W(0),W(0) \rangle \\ 
&= -(D^2 u)(W(0),W(0)). 
\end{align*}
We define a vector field $\tilde{W}$ along $\bar{\gamma}$ by 
\[\tilde{W}(t) = \begin{cases} W(t) & \text{\rm for $t \in [0,\tau]$} \\ 0 & \text{\rm for $t \in [\tau,r]$.} \end{cases}\] 
Clearly, $\tilde{W}(r) = 0$ and 
\begin{align*} 
&\int_0^r \big ( |D_t \tilde{W}(t)|^2 - R(\bar{\gamma}'(t),\tilde{W}(t),\bar{\gamma}'(t),\tilde{W}(t)) \big ) \, dt \\ 
&= -(D^2 u)(\tilde{W}(0),\tilde{W}(0)). 
\end{align*} 
Since $D_t W(\tau) \neq 0$, the covariant derivative of $\tilde{W}(t)$ is discontinuous at the point $t = \tau$. Consequently, we can find a smooth vector field $Z$ along $\bar{\gamma}$ such that $Z(0) = \tilde{W}(0)$, $Z(r) = \tilde{W}(r)$, and 
\begin{align*} 
&\int_0^r \big ( |D_t Z(t)|^2 - R(\bar{\gamma}'(t),Z(t),\bar{\gamma}'(t),Z(t)) \big ) \, dt \\ 
&< -(D^2 u)(Z(0),Z(0)). 
\end{align*}
This contradicts Lemma \ref{Riemannian.Sobolev.second.variation}. This completes the proof of Lemma \ref{Riemannian.Sobolev.no.conjugate.points}. \\

\begin{proposition} 
\label{Riemannian.Sobolev.monotonicity}
Assume that $r>0$ and $x \in A_r$. Then 
\[\lim_{t \to 0} |\det D\Phi_t(x)| = 1.\] 
Moreover, the function 
\[t \mapsto (1+t \, f(x)^{\frac{1}{n-1}})^{-n} \, |\det D\Phi_t(x)|\] 
is monotone decreasing for $t \in (0,r)$. 
\end{proposition}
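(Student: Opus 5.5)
Fix $r>0$ and $\bar x\in A_r$, let $\bar\gamma(t)=\exp_{\bar x}(t\,\nabla u(\bar x))$, and let $\{e_1,\hdots,e_n\}$ be an orthonormal basis of $T_{\bar x}M$. The plan is to express $D\Phi_t(\bar x)$ through Jacobi fields and then run a Riccati-type comparison argument along $\bar\gamma$.

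\emph{Jacobi field description.} For each $i$, let $E_i(t)$ be the parallel transport of $e_i$ along $\bar\gamma$, and let $X_i(t)$ be the Jacobi field along $\bar\gamma$ with
\[X_i(0)=e_i,\qquad D_tX_i(0)=\sum_j (D^2u)(e_i,e_j)\,e_j .\]
Differentiating $\Phi_t(x)=\exp_x(t\,\nabla u(x))$ in the direction $e_i$ gives $D\Phi_t(\bar x)\,e_i=X_i(t)$. Define the matrix $P(t)$ by $P_{ij}(t)=\langle X_i(t),E_j(t)\rangle$. Then $|\det D\Phi_t(\bar x)|=|\det P(t)|$. Since $P(0)=\mathrm{id}$, the first assertion $\lim_{t\to0}|\det D\Phi_t(\bar x)|=1$ follows immediately.

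\emph{Invertibility of $P(t)$.} We show $P(t)$ is invertible for every $t\in(0,r)$. If $\sum_i c_iX_i(\tau)=0$ for some $\tau\in(0,r)$, then $W=\sum_i c_iX_i$ is a Jacobi field satisfying the hypothesis of Lemma~\ref{Riemannian.Sobolev.no.conjugate.points}. That lemma forces $W\equiv0$, hence $c=0$. So $\det P(t)\neq 0$, and by continuity $\det P(t)>0$ on $[0,r)$.

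\emph{Riccati comparison.} Define $Q(t)$ by $D_tX_i=\sum_j Q_{ij}X_j$, i.e.\ $Q=P'P^{-1}$. The entries $P'_{ij}=\langle D_tX_i,E_j\rangle$ come from the frame $E_j$, and $Q(0)=D^2u(\bar x)$. There are three steps.
\begin{itemize}
\item[(i)] $Q$ is symmetric. The quantity $\langle D_tX_i,X_j\rangle-\langle X_i,D_tX_j\rangle$ is constant in $t$ (Wronskian identity) and vanishes at $t=0$ because $D^2u$ is symmetric.
\item[(ii)] $Q$ satisfies the Riccati equation $Q'+Q^2+R_{\bar\gamma}=0$, where $R_{\bar\gamma}$ denotes the curvature operator $Z\mapsto R(Z,\bar\gamma')\bar\gamma'$ in the frame $E_j$.
\item[(iii)] Taking traces and using $\mathrm{Ric}\ge 0$ together with $\mathrm{tr}(Q^2)\ge(\mathrm{tr}\,Q)^2/n$ yields
\[\frac{d}{dt}\,\mathrm{tr}\,Q\le -\frac1n\,(\mathrm{tr}\,Q)^2 .\]
\end{itemize}
Note also that $\frac{d}{dt}\log\det P=\mathrm{tr}\,Q$.

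\emph{Integrating the inequality.} At $t=0$, Lemma~\ref{Laplacian} gives $\mathrm{tr}\,Q(0)=\Delta u(\bar x)\le n f(\bar x)^{\frac1{n-1}}=:n\lambda$. Comparing with the solution $n\lambda/(1+t\lambda)$ of the ODE $y'=-y^2/n$ should give
\[\mathrm{tr}\,Q(t)\le \frac{n\lambda}{1+t\lambda}\quad\text{for } t\in[0,r).\]
This is the main obstacle: the comparison must hold on all of $[0,r)$ even when $\mathrm{tr}\,Q$ starts negative and could in principle blow down. The intended argument is to set $h=1/\mathrm{tr}\,Q$ where $\mathrm{tr}\,Q>0$ and use $h'\ge1/n$. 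Wherever $\mathrm{tr}\,Q\le0$ the bound is trivial. Once $\mathrm{tr}\,Q$ is positive it stays positive, or else it is already below the bound. $Q$ stays finite on $[0,r)$ since $P$ is invertible there.

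With this bound,
\[\frac{d}{dt}\log\Big((1+t\lambda)^{-n}\det P(t)\Big)=\mathrm{tr}\,Q-\frac{n\lambda}{1+t\lambda}\le 0 .\]
Since $\det P=|\det D\Phi_t(\bar x)|$, this gives the claimed monotonicity.
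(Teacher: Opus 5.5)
Your proof follows the paper's argument step for step: Jacobi fields $X_i$, the matrix $P_{ij}=\langle X_i,E_j\rangle$, invertibility of $P(t)$ from Lemma~\ref{Riemannian.Sobolev.no.conjugate.points}, a Riccati equation, the trace inequality, ODE comparison, and integration of $\frac{d}{dt}\log\det P$. However, steps (i) and (ii) are false as written for the $Q$ you chose.

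Your $Q=P'P^{-1}$ is the matrix of the operator $X_i(t)\mapsto D_tX_i(t)$ in the basis $\{X_j(t)\}$, and that basis is not orthonormal. The Wronskian identity says that $\langle D_tX_i,X_j\rangle=(QG)_{ij}$ is symmetric, where $G=PP^T$ is the Gram matrix. This is self-adjointness of the operator, not symmetry of its matrix in that basis. Writing $S_{ij}=R(\bar\gamma',E_i,\bar\gamma',E_j)$, the Jacobi equation reads $P''=-PS$. Hence $Q'=P''P^{-1}-Q^2=-PSP^{-1}-Q^2$, so the curvature term is the curvature operator in the basis $\{X_j\}$, not its matrix $S$ in the frame $\{E_j\}$ as you state. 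Moreover, $Q$ really is not symmetric in general. With $A=D^2u(\bar x)$ one finds $P(t)=I+tA-\frac{t^2}{2}S(0)+O(t^3)$ and $Q(t)-Q(t)^T=t^2\,[S(0),A]+O(t^3)$, which need not vanish.

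The damage is only cosmetic, and the paper's convention avoids it. Use $Q=P^{-1}P'$, the matrix of the same operator in the orthonormal frame $\{E_j\}$. It is symmetric because $P^{-1}P'=P^{-1}(P'P^T)P^{-T}$ and $(P'P^T)_{ij}=\langle D_tX_i,X_j\rangle$ is exactly your Wronskian quantity. It also satisfies $Q'=-S-Q^2$. Alternatively, keep $P'P^{-1}$ and note that it is similar to $P^{-1}P'$. It therefore has real eigenvalues, the same trace and the same $\mathrm{tr}(Q^2)$, while $\mathrm{tr}(PSP^{-1})=\mathrm{tr}\,S=\mathrm{Ric}(\bar\gamma',\bar\gamma')\geq 0$. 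Either way, (iii) and $\frac{d}{dt}\log\det P=\mathrm{tr}\,Q$ hold.

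In the comparison step your justification has the direction reversed. Inequality (iii) makes $\mathrm{tr}\,Q$ nonincreasing, so it can cross zero downward (it need not stay positive) but never upward. Hence if $\mathrm{tr}\,Q(t)>0$, then $\mathrm{tr}\,Q>0$ on all of $[0,t]$. Integrating $h'\geq 1/n$ from $h(0)\geq 1/(n\lambda)$ then yields $\mathrm{tr}\,Q(t)\leq n\lambda/(1+t\lambda)$. If $\mathrm{tr}\,Q(t)\leq 0$, the bound is trivial.

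With these corrections your argument is complete and coincides with the paper's.
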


\textbf{Proof.} 
Let us fix a real number $r>0$ and a point $\bar{x} \in A_r$. We define $\bar{\gamma}(t) := \exp_{\bar{x}}(t \, \nabla u(\bar{x}))$ for all $t \in [0,r]$. Let $\{e_1,\hdots,e_n\}$ be an orthonormal basis of $T_{\bar{x}} M$. For each $1 \leq i \leq n$, we denote by $E_i(t)$ the unique parallel vector field along $\bar{\gamma}$ satisfying $E_i(0) = e_i$. Moreover, for each $1 \leq i \leq n$, we denote by $X_i(t)$ the unique Jacobi field along $\bar{\gamma}$ satisfying $X_i(0) = e_i$ and 
\[D_t X_i(0) = \sum_{j=1}^n (D^2 u)(e_i,e_j) \, e_j.\] 
It follows from Lemma \ref{Riemannian.Sobolev.no.conjugate.points} that $X_1(t),\hdots,X_n(t)$ are linearly independent for each $t \in (0,r)$. 

For each $t \in [0,r]$, we define an $n \times n$-matrix $P(t)$ by 
\[P_{ij}(t) = \langle X_i(t),E_j(t) \rangle\] 
for $1 \leq i,j \leq n$. Moreover, for each $t \in [0,r]$, we define an $n \times n$-matrix $S(t)$ by 
\[S_{ij}(t) = R(\bar{\gamma}'(t),E_i(t),\bar{\gamma}'(t),E_j(t))\] 
for $1 \leq i,j \leq n$. The matrix $S(t)$ is symmetric for each $t \in [0,r]$. Moreover, 
\[\text{\rm tr}(S(t)) = \text{\rm Ric}(\bar{\gamma}'(t),\bar{\gamma}'(t)) \geq 0\] 
for each $t \in [0,r]$. Since the vector fields $X_1(t),\hdots,X_n(t)$ are Jacobi fields, we obtain 
\begin{align*} 
P_{ij}''(t) 
&= \langle D_t D_t X_i(t),E_j(t) \rangle \\ 
&= -R(\bar{\gamma}'(t),X_i(t),\bar{\gamma}'(t),E_j(t)) \\ 
&= -\sum_{k=1}^n P_{ik}(t) \, S_{kj}(t) 
\end{align*}
for all $1 \leq i,j \leq n$ and all $t \in [0,r]$. In other words, the function $P(t)$ satisfies 
\[P''(t) = -P(t) S(t)\] 
for all $t \in [0,r]$. Moreover, 
\[P_{ij}(0) = \langle X_i(0),E_j(0) \rangle = \delta_{ij}\] 
and 
\[P_{ij}'(0) = \langle D_t X_i(0),E_j(0) \rangle = (D^2 u)(e_i,e_j)\] 
for $1 \leq i,j \leq n$. In particular, the matrix $P'(0) P(0)^T$ is symmetric. Moreover, since $S(t)$ is symmetric for each $t \in [0,r]$, it follows that the matrix 
\begin{align*} 
\frac{d}{dt} (P'(t) P(t)^T) 
&= P''(t) P(t)^T + P'(t) P'(t)^T \\ 
&= -P(t) S(t) P(t)^T + P'(t) P'(t)^T 
\end{align*}
is symmetric for each $t \in [0,r]$. Consequently, the matrix $P'(t) P(t)^T$ is symmetric for each $t \in [0,r]$. 

Since $X_1(t),\hdots,X_n(t)$ are linearly independent for each $t \in (0,r)$, the matrix $P(t)$ is invertible for each $t \in (0,r)$. Since the matrix $P'(t) P(t)^T$ is symmetric for each $t \in (0,r)$, it follows that the matrix $Q(t) := P(t)^{-1} P'(t)$ is symmetric for each $t \in (0,r)$. The function $Q(t)$ satisfies the Riccati equation 
\[Q'(t) = P(t)^{-1} P''(t) - P(t)^{-1} P'(t) P(t)^{-1} P'(t) = -S(t) - Q(t)^2\] 
for all $t \in (0,r)$. Moreover, since $Q(t)$ is symmetric, we obtain $\text{\rm tr}(Q(t)^2) \geq \frac{1}{n} \, \text{\rm tr}(Q(t))^2$ for all $t \in (0,r)$. Since $\text{\rm tr}(S(t)) \geq 0$ for all $t \in (0,r)$, it follows that 
\[\frac{d}{dt} \text{\rm tr}(Q(t)) = -\text{\rm tr}(S(t)) - \text{\rm tr}(Q(t)^2) \leq -\frac{1}{n} \, \text{\rm tr}(Q(t))^2\] 
for all $t \in (0,r)$. Using Lemma \ref{Laplacian}, we obtain 
\[\lim_{t \to 0} \text{\rm tr}(Q(t)) = \Delta u(\bar{x}) \leq n \, f(\bar{x})^{\frac{1}{n-1}}.\] 
A standard ODE comparison principle gives 
\[\text{\rm tr}(Q(t)) \leq n \, \big ( 1+t \, f(\bar{x})^{\frac{1}{n-1}} \big )^{-1} \, f(\bar{x})^{\frac{1}{n-1}}\] 
for all $t \in (0,r)$.

Since $P(0) = \text{\rm id}$, we know that $\det P(t) > 0$ if $t > 0$ is sufficiently small. Since $P(t)$ is invertible for each $t \in (0,r)$, it follows that $\det P(t) > 0$ for each $t \in (0,r)$. Moreover, 
\[\frac{d}{dt} \log \det P(t) = \text{\rm tr}(Q(t)) \leq n \, \big ( 1+t \, f(\bar{x})^{\frac{1}{n-1}} \big )^{-1} \, f(\bar{x})^{\frac{1}{n-1}}\] 
for all $t \in (0,r)$. Consequently, the function 
\[t \mapsto (1+t \, f(\bar{x})^{\frac{1}{n-1}})^{-n} \, \det P(t)\] 
is monotone decreasing for $t \in (0,r)$. 

Finally, the differential $(D\Phi_t)_{\bar{x}}: T_{\bar{x}} M \to T_{\bar{\gamma}(t)} M$ maps the vector $e_i \in T_{\bar{x}} M$ to the vector $X_i(t) \in T_{\bar{\gamma}(t)} M$. Consequently, $|\det D\Phi_t(\bar{x})| = \det P(t)$ for all $t \in (0,r)$. Putting these facts together, the assertion follows. This completes the proof of Proposition \ref{Riemannian.Sobolev.monotonicity}. \\

\begin{corollary}
\label{Riemannian.Sobolev.upper.bound.for.Jacobian}
The Jacobian determinant of $\Phi_r$ satisfies 
\[|\det D\Phi_r(x)| \leq (1+r \, f(x)^{\frac{1}{n-1}})^n\] 
for each $r>0$ and each point $x \in A_r$. 
\end{corollary}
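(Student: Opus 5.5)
The plan is to derive this directly from Proposition \ref{Riemannian.Sobolev.monotonicity}, using a small trick to reach the endpoint $t = r$. That proposition only gives monotonicity on the open interval $(0,r)$ for points of $A_r$, so I first want to know that a point of $A_r$ also lies in $A_s$ for every $s > r$ close to $r$. That inclusion is false in general, so instead I will show $A_r \subset A_s$ for $0 < s < r$ and then pass to the limit from inside.

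\textbf{Step 1: $A_r \subset A_s$ for $0<s<r$.} Fix $\bar{x} \in A_r$ and set $\bar{\gamma}(t) = \exp_{\bar{x}}(t\,\nabla u(\bar{x}))$. The proof of Lemma \ref{Riemannian.Sobolev.second.variation} shows that $\bar{\gamma}$ minimizes
\[
u(\gamma(0)) + \tfrac12 \int_0^r |\gamma'|^2
\]
among paths with $\gamma(0) \in D$ and $\gamma(r) = \bar{\gamma}(r)$. In particular $\bar{\gamma}|_{[0,r]}$ is minimizing, so $d(\bar{x},\bar{\gamma}(r)) = r\,|\nabla u(\bar{x})|$.

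Now take $x \in D$ and let $\sigma$ be a minimizing geodesic from $x$ to $\bar{\gamma}(s)$, parametrized on $[0,s]$. Concatenating $\sigma$ with $\bar{\gamma}|_{[s,r]}$ gives a competitor $\gamma$. Comparing the actions of $\gamma$ and $\bar{\gamma}$ and cancelling the common piece on $[s,r]$ yields
\[
r\,u(x) + \tfrac{r}{2s}\, d(x,\bar{\gamma}(s))^2 \;\geq\; r\,u(\bar{x}) + \tfrac{r}{2}\, s\,|\nabla u(\bar{x})|^2 .
\]
Multiplying by $s/r$ gives exactly the defining inequality of $A_s$. Hence $\bar{x} \in A_s$.

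\textbf{Step 2: the estimate for $t<r$.} For $0 < t < r$, Proposition \ref{Riemannian.Sobolev.monotonicity} applies to $\bar{x} \in A_r$: the function $t \mapsto (1+t\,f^{\frac{1}{n-1}})^{-n}\,|\det D\Phi_t(\bar{x})|$ is decreasing on $(0,r)$. Its limit as $t \to 0$ is $1$, so
\[
|\det D\Phi_t(\bar{x})| \leq (1+t\,f(\bar{x})^{\frac{1}{n-1}})^n \quad \text{for all } t \in (0,r).
\]
Step 1 is not even needed for this; it just confirms that the statement is consistent across different values of $r$.

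\textbf{Step 3: passing to $t=r$.} The map $t \mapsto D\Phi_t(\bar{x})$ is continuous in $t$ on $[0,r]$, since $\Phi_t(x) = \exp_x(t\,\nabla u(x))$ is smooth in $(t,x)$. The same holds when $u$ is only $C^{2,\gamma}$, because the relevant differentials are expressed through the Jacobi fields $X_i(t)$, which depend continuously on $t$ up to $t=r$. Letting $t \uparrow r$ in the inequality of Step 2 gives the claim.

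The only point needing care is this continuity at the endpoint. Lemma \ref{Riemannian.Sobolev.no.conjugate.points} does not exclude a conjugate point at $t = r$, but that case causes no trouble: there $\det D\Phi_r = 0$ and the inequality holds trivially. In every case the limit argument applies, since $|\det D\Phi_t(\bar{x})| = |\det P(t)|$ and $P(t)$ is continuous on $[0,r]$.
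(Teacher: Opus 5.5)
Your argument is correct and is the one the paper intends. The corollary is stated without proof: the monotonicity in Proposition \ref{Riemannian.Sobolev.monotonicity} together with $\lim_{t \to 0} |\det D\Phi_t(x)| = 1$ gives the bound for $t \in (0,r)$, and continuity of $t \mapsto \det P(t)$ up to $t = r$ gives it at $t = r$. Your Step 1 (the inclusion $A_r \subset A_s$ for $s < r$) is valid but, as you note yourself, not needed.
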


We now continue the proof of Theorem \ref{Riemannian.Sobolev.inequality}. Using Lemma \ref{Riemannian.Sobolev.surjectivity} and Corollary \ref{Riemannian.Sobolev.upper.bound.for.Jacobian}, we obtain 
\begin{align*} 
\Big | \Big \{ p \in M: \sup_{x \in D} d(x,p) < r \Big \} \Big | 
&\leq \int_{A_r} |\det D\Phi_r(x)| \, d\text{\rm vol}(x) \\ 
&\leq \int_D (1+r \, f(x)^{\frac{1}{n-1}})^n \, d\text{\rm vol}(x)
\end{align*}
for all $r > 0$. Finally, we divide by $r^n$ and send $r \to \infty$. This gives 
\[|B^n| \, \theta \leq \int_D f^{\frac{n}{n-1}}.\] 
Using the normalization condition (\ref{Riemannian.Sobolev.normalization}), we conclude that 
\[\int_D |\nabla f| + \int_{\partial D} f = n \int_D f^{\frac{n}{n-1}} \geq n \, |B^n|^{\frac{1}{n}} \, \theta^{\frac{1}{n}} \, \Big ( \int_D f^{\frac{n}{n-1}} \Big )^{\frac{n-1}{n}}.\]
This completes the proof of Theorem \ref{Riemannian.Sobolev.inequality} in the special case when $D$ is connected. 

Finally, if $D$ is disconnected, we apply the inequality to each connected component of $D$, and take the sum over all connected components. \\

\section{The Fenchel-Willmore-Chen inequality for hypersurfaces in manifolds with nonnegative Ricci curvature}

In this final section, we discuss a version of the Fenchel-Willmore-Chen inequality for hypersurfaces in manifolds with nonnegative Ricci curvature.

\begin{theorem}[cf. \cite{Agostiniani-Fogagnolo-Mazzieri}, \cite{Heintze-Karcher}]
\label{Riemannian.FWC.estimate}
Let $(M,g)$ be a complete noncompact manifold of dimension $n$ with nonnegative Ricci curvature. Let $\Sigma$ be a compact hypersurface in $M$ without boundary. Then 
\[\int_\Sigma \Big ( \frac{|H|}{n-1} \Big )^{n-1} \geq |S^{n-1}| \, \theta,\] 
where $H$ denotes the mean curvature vector of $\Sigma$ and $\theta$ denotes the asymptotic volume ratio of $(M,g)$.
\end{theorem}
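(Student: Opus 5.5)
We combine the normal-exponential-map construction of the Fenchel-Willmore-Chen section with the Riccati comparison of Proposition \ref{Riemannian.Sobolev.monotonicity}. Let $\nu$ be a unit normal field along $\Sigma$ (locally; we work on the normal bundle to avoid orientability issues). For $r>0$ let $A_r$ be the set of pairs $(x,y)$ with $x \in \Sigma$, $y \in T_x^\perp \Sigma$, $|y|<1$, such that
\[d\big(z,\exp_x(r y)\big) \geq r|y| \quad \text{for all } z \in \Sigma.\]
Define $\Phi_r(x,y) = \exp_x(ry)$. These are the normal vectors along which the geodesic minimizes distance to $\Sigma$ up to time $r$.

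\textbf{Step 1: surjectivity.} We show that $\{p \in M : d(p,\Sigma) < r\} \supset \{p : \sup_{x\in\Sigma} d(x,p) < r\}$ is contained in $\Phi_r(A_r)$. Given such a $p$, choose $\bar x \in \Sigma$ minimizing $d(\cdot,p)$ and a minimizing geodesic from $\bar x$ to $p$. By the first variation formula its initial velocity is normal; call it $r\bar y$. Then $|\bar y| = d(\bar x,p)/r < 1$, and $(\bar x,\bar y) \in A_r$ by minimality. This is the analogue of Lemma \ref{FWC.surjectivity} and Lemma \ref{Riemannian.Sobolev.surjectivity}.

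\textbf{Step 2: Jacobian bound.} We aim to show that
\[|\det D\Phi_r(x,y)| \leq r\,\big(1 - r\langle H(x),y\rangle/(n-1)\big)_+^{\,n-1}\]
on $A_r$, with the sign convention that makes $-\langle I\!I,y\rangle$ the second fundamental form appearing in the FWC section. Since $\Sigma$ is a hypersurface, $y = s\nu$ with $|s|<1$, and the factor $r$ comes from $dy = ds$. The differential is computed with $\Sigma$-Jacobi fields $X_i$ along $\gamma(t)=\exp_x(ty)$: $X_i(0)=e_i$ tangent to $\Sigma$, $D_tX_i(0) = -\langle I\!I(e_i,\cdot),y\rangle$ (the shape operator applied to $e_i$), together with the radial field $t y$. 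As in Lemma \ref{Riemannian.Sobolev.no.conjugate.points}, second variation for the distance-to-$\Sigma$ problem shows there are no focal points on $(0,r)$. The matrix $P(t)$ of the $X_i$ relative to a parallel frame of $\gamma'^\perp$ is then invertible, and $Q=P^{-1}P'$ is a symmetric $(n-1)\times(n-1)$ matrix satisfying the Riccati equation
\[Q' = -S - Q^2,\]
where $\operatorname{tr} S = \operatorname{Ric}(\gamma',\gamma') \geq 0$. Hence
\[\tfrac{d}{dt}\operatorname{tr}Q \leq -\tfrac{1}{n-1}(\operatorname{tr}Q)^2, \qquad \operatorname{tr}Q(0) = -\langle H,y\rangle.\]
ODE comparison gives $\det P(t) \leq (1 - t\langle H,y\rangle/(n-1))^{n-1}$ wherever the right side is positive. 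If $-\langle H,y\rangle < 0$, then $\operatorname{tr} Q$ blows down to $-\infty$ before time $(n-1)/\langle H,y\rangle$, so beyond that time a focal point would occur and such points are not in $A_r$. This gives the positive part.

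\textbf{Step 3: integration and limit.} By the area formula,
\[\big|\{p : \sup_{x\in\Sigma} d(x,p)<r\}\big| \leq \int_\Sigma \int_{-1}^{1} r\,\big(1 + r s\,|H|/(n-1)\big)_+^{\,n-1}\, ds\, d\mathrm{vol}(x).\]
Here only the sign $s$ with $-\langle H,s\nu\rangle \geq 0$ contributes at large $r$, while the opposite sign contributes at most $O(r)$ after the positive part is taken. The inner integral is
\[r^n\Big(\tfrac{|H|}{n-1}\Big)^{n-1}\int_0^1 s^{n-1}\,ds \;+\; O(r^{n-1}) \;=\; \frac{r^n}{n}\Big(\tfrac{|H|}{n-1}\Big)^{n-1} + O(r^{n-1}).\]
The left side is at least the volume of a geodesic ball of radius $r - \operatorname{diam}$ around a point of $\Sigma$, which is $\sim |B^n|\theta r^n$. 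Dividing by $r^n$ and letting $r\to\infty$ gives
\[|B^n|\,\theta \leq \frac{1}{n}\int_\Sigma \Big(\frac{|H|}{n-1}\Big)^{n-1},\]
and $n|B^n| = |S^{n-1}|$ yields the theorem.

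The main obstacle is Step 2. The delicate points are the no-focal-point argument, which requires adapting Lemma \ref{Riemannian.Sobolev.second.variation} to variations with initial point free on $\Sigma$, where the boundary term is the second fundamental form. One must also justify the initial condition for $Q$ and handle the sign, i.e.\ why the half-line $\langle H,y\rangle > 0$ is cut off.
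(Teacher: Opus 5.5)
Your proposal is correct and follows essentially the same route as the paper's (Heintze--Karcher) argument: closest-point surjectivity of the normal exponential map onto the tubular neighborhood, absence of focal points on $(0,r)$ via second variation with the $I\!I$ boundary term, and the Riccati comparison for $\operatorname{tr} Q$, which gives both $|\det D\Phi_r| \leq r\,(1 - r\langle H,y\rangle/(n-1))^{n-1}$ and $1 - r\langle H,y\rangle/(n-1) \geq 0$ on $A_r$, followed by dividing by $r^n$. The differences are cosmetic: you bound the smaller set $\{\sup_{x\in\Sigma} d(x,p) < r\}$ below by a ball of radius $r-\operatorname{diam}$, and you expand the fiber integral explicitly instead of passing to the limit under the integral; note also that the factor $r$ comes from $D\Phi_r$ sending the unit normal to a vector of length $r$, not from $dy = ds$.
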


Theorem \ref{Riemannian.FWC.estimate} is a consequence of an important estimate due to Heintze and Karcher \cite{Heintze-Karcher}. Specifically, Theorem 2.1 in \cite{Heintze-Karcher} gives an upper bound for the volume of a tubular neighborhood of $\Sigma$ of radius $r$. Theorem \ref{Riemannian.FWC.estimate} follows from this inequality by sending $r \to \infty$; see the comment at the top of p.~452 in \cite{Heintze-Karcher}. Agostiniani, Fogagnolo, and Mazzieri \cite{Agostiniani-Fogagnolo-Mazzieri} recently gave an alternative proof of Theorem \ref{Riemannian.FWC.estimate}.

Theorem \ref{Riemannian.FWC.estimate} can be generalized to higher codimension, provided that the ambient manifold has nonnegative sectional curvature. This again follows from Theorem 2.1 in \cite{Heintze-Karcher}.

In the remainder of this section, we discuss the proof of Theorem \ref{Riemannian.FWC.estimate}, following the work of Heintze and Karcher \cite{Heintze-Karcher}.

We define 
\[U := \{(x,y): x \in \Sigma, \, y \in T_x^\perp \Sigma, \, |y| < 1\}.\] 
For each $r > 0$, we denote by $A_r$ the set of all points $(\bar{x},\bar{y}) \in U$ with the property that 
\[d \big ( x,\exp_{\bar{x}}(r\bar{y}) \big ) \geq r \, |\bar{y}|\] 
for all $x \in \Sigma$. Moreover, for each $r > 0$, we define a map $\Phi_r: U \to M$ by 
\[\Phi_r(x,y) = \exp_x(ry)\] 
for all $(x,y) \in U$.

\begin{lemma} 
\label{Riemannian.FWC.surjectivity}
For each $r>0$, the set 
\[\Big \{ p \in M: \inf_{x \in \Sigma} d(x,p) < r \Big \}\] 
is contained in the image $\Phi_r(A_r)$. 
\end{lemma}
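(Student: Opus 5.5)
Fix $r>0$ and a point $p \in M$ with $\inf_{x \in \Sigma} d(x,p) < r$. The plan mirrors the proof of Lemma \ref{Riemannian.Sobolev.surjectivity}, with the function $u$ replaced by zero and the domain $D$ replaced by the closed hypersurface $\Sigma$.

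First, since $\Sigma$ is compact, we choose a point $\bar{x} \in \Sigma$ where $x \mapsto d(x,p)$ attains its minimum over $\Sigma$. Then $d(\bar{x},p) = \inf_{x \in \Sigma} d(x,p) < r$.

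Next, let $\bar{\gamma}: [0,r] \to M$ be a minimizing geodesic with $\bar{\gamma}(0) = \bar{x}$ and $\bar{\gamma}(r) = p$. Such a geodesic exists since $M$ is complete. Parametrized on $[0,r]$, it has constant speed $|\bar{\gamma}'(t)| = d(\bar{x},p)/r < 1$. We set $\bar{y} := \bar{\gamma}'(0)$.

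We then show that $\bar{y}$ is normal to $\Sigma$. The path $\bar{\gamma}$ minimizes the energy $\frac{1}{2}\int_0^r |\gamma'(t)|^2\,dt$ among all smooth paths with $\gamma(0) \in \Sigma$ and $\gamma(r) = p$. Indeed, for any such path,
\[\frac{r}{2}\int_0^r |\gamma'|^2 \geq \frac{1}{2}\,d(\gamma(0),p)^2 \geq \frac{1}{2}\,d(\bar{x},p)^2 = \frac{r}{2}\int_0^r |\bar{\gamma}'|^2 .\]
The first variation formula, applied with the initial point free to move on $\Sigma$, gives $\langle \bar{\gamma}'(0), v\rangle = 0$ for all $v \in T_{\bar{x}}\Sigma$. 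Hence $\bar{y} \in T_{\bar{x}}^\perp \Sigma$ and $|\bar{y}| < 1$, so $(\bar{x},\bar{y}) \in U$.

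Finally, we check membership in $A_r$. We have $\Phi_r(\bar{x},\bar{y}) = \exp_{\bar{x}}(r\bar{y}) = \bar{\gamma}(r) = p$. For every $x \in \Sigma$,
\[d\big(x,\exp_{\bar{x}}(r\bar{y})\big) = d(x,p) \geq d(\bar{x},p) = r\,|\bar{y}|.\]
Thus $(\bar{x},\bar{y}) \in A_r$ and $p = \Phi_r(\bar{x},\bar{y})$, which proves the lemma.

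There is no serious obstacle here. The only point needing care is the first-variation step: we must justify that the minimizing geodesic hits $\Sigma$ orthogonally. This is the standard fact that a shortest path from a point to a submanifold meets it orthogonally. One can also see it directly: if $\langle \bar{y},v\rangle \neq 0$ for some $v \in T_{\bar{x}}\Sigma$, then moving $\bar{x}$ along $\Sigma$ in the direction $\pm v$ would strictly decrease the distance to $p$ to first order, contradicting the choice of $\bar{x}$. Unlike the Sobolev case, no boundary condition is needed, because $\Sigma$ has no boundary.
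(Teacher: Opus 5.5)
Your proposal is correct and follows essentially the same argument as the paper. You choose a distance-minimizing point $\bar{x} \in \Sigma$ and a minimizing geodesic parametrized on $[0,r]$. You then use the energy comparison and the first variation formula to get $\bar{y} = \bar{\gamma}'(0) \in T_{\bar{x}}^\perp \Sigma$ with $|\bar{y}|<1$, and conclude that $(\bar{x},\bar{y}) \in A_r$ from $d(x,p) \geq d(\bar{x},p) = r\,|\bar{y}|$.
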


\textbf{Proof.} 
Let us fix a real number $r>0$. Let $p$ be a point in $M$ with the property that $\inf_{x \in \Sigma} d(x,p) < r$. We can find a point $\bar{x} \in \Sigma$ such that 
\[d(\bar{x},p) = \inf_{x \in \Sigma} d(x,p).\] 
Then $d(\bar{x},p) < r$. Let $\bar{\gamma}: [0,r] \to M$ be a minimizing geodesic such that $\bar{\gamma}(0) = \bar{x}$ and $\bar{\gamma}(r) = p$. Clearly, $r \, |\bar{\gamma}'(t)| = d(\bar{x},p)$ for all $t \in [0,r]$. Moreover, 
\[r \int_0^r |\gamma'(t)|^2 \, dt \geq d(\gamma(0),p)^2 \geq d(\bar{x},p)^2\] 
for every smooth path $\gamma: [0,r] \to M$ satisfying $\gamma(0) \in \Sigma$ and $\gamma(r) = p$. Therefore, the path $\bar{\gamma}$ minimizes the functional $\int_0^r |\gamma'(t)|^2 \, dt$ among all smooth paths $\gamma: [0,r] \to M$ satisfying $\gamma(0) \in \Sigma$ and $\gamma(r) = p$. The first variation formula implies that $\bar{\gamma}'(0) \in T_{\bar{x}}^\perp \Sigma$. We define $\bar{y} = \bar{\gamma}'(0)$. Then 
\[\exp_{\bar{x}}(r\bar{y}) = \exp_{\bar{\gamma}(0)}(r \, \bar{\gamma}'(0)) = \bar{\gamma}(r) = p.\]
Moreover, 
\[r \, |\bar{y}| = r \, |\bar{\gamma}'(0)| = d(\bar{x},p).\] 
Since $d(\bar{x},p) < r$, it follows that $|\bar{y}| < 1$. Therefore, $(\bar{x},\bar{y}) \in U$. Finally, 
\[d \big ( x,\exp_{\bar{x}}(r\bar{y}) \big ) = d(x,p) \geq d(\bar{x},p) = r \, |\bar{y}|\] 
for each point $x \in \Sigma$. Thus, $(\bar{x},\bar{y}) \in A_r$ and $\Phi_r(\bar{x},\bar{y}) = p$. This completes the proof of Lemma \ref{Riemannian.FWC.surjectivity}. \\

\begin{lemma} 
\label{Riemannian.FWC.second.variation}
Assume that $r>0$ and $(\bar{x},\bar{y}) \in A_r$. We define $\bar{\gamma}(t) := \exp_{\bar{x}}(t \bar{y})$ for all $t \in [0,r]$. If $Z$ is a smooth vector field along $\bar{\gamma}$ such that $Z(0) \in T_{\bar{x}} \Sigma$ and $Z(r) = 0$, then 
\[-\langle I\!I(Z(0),Z(0)),\bar{y} \rangle + \int_0^r \big ( |D_t Z(t)|^2 - R(\bar{\gamma}'(t),Z(t),\bar{\gamma}'(t),Z(t)) \big ) \, dt \geq 0.\]
\end{lemma}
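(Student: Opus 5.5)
The plan is to mirror the proof of Lemma \ref{Riemannian.Sobolev.second.variation}: realize $\bar{\gamma}$ as a minimizer of a variational problem with free initial point on $\Sigma$, and then read off the asserted inequality from the second variation formula.

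\textbf{Step 1: a minimization problem.} Consider the energy functional $E(\gamma) = \int_0^r |\gamma'(t)|^2 \, dt$ on smooth paths $\gamma: [0,r] \to M$ with $\gamma(0) \in \Sigma$ and $\gamma(r) = \bar{\gamma}(r)$. By Cauchy--Schwarz and the definition of $A_r$,
\[r \int_0^r |\gamma'(t)|^2 \, dt \geq d(\gamma(0),\bar{\gamma}(r))^2 \geq r^2 \, |\bar{y}|^2 .\]
On the other hand, $r \, E(\bar{\gamma}) = r^2 |\bar{y}|^2$ because $\bar{\gamma}$ is a geodesic with constant speed $|\bar{y}|$. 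Hence $\bar{\gamma}$ minimizes $E$ in this class.

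\textbf{Step 2: the second variation.} Given $Z$ with $Z(0) \in T_{\bar{x}}\Sigma$ and $Z(r) = 0$, construct a one-parameter family $\gamma_s$ of admissible paths. Take $\gamma_s(0) = c(s)$, where $c$ is a geodesic \emph{of $\Sigma$} with $c'(0) = Z(0)$, and $\gamma_s(r) = \bar{\gamma}(r)$ for all $s$. One concrete choice is $\gamma_s(t) = \exp_{\bar{\gamma}(t)}(s Z(t) + s^2 V(t))$, with a correction $V$ supported near $t=0$ that forces $\gamma_s(0) \in \Sigma$; any variation with these endpoint properties will do.

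The standard second variation formula gives
\[\frac{d^2}{ds^2} E(\gamma_s)\Big|_{s=0} = 2\int_0^r \big( |D_t Z|^2 - R(\bar{\gamma}',Z,\bar{\gamma}',Z) \big)\,dt + 2\Big[\big\langle D_s \partial_s \gamma_s, \bar{\gamma}' \big\rangle\Big]_{t=0}^{t=r}.\]
The interior acceleration terms vanish because $\bar{\gamma}$ is a geodesic. At $t = r$ the boundary term vanishes since the endpoint is fixed. At $t = 0$ it equals $-\langle D_s c'(0), \bar{y}\rangle$. Since $c$ is a geodesic of $\Sigma$, we have $D_s c' = I\!I(Z(0),Z(0))$, so this term is $-\langle I\!I(Z(0),Z(0)), \bar{y}\rangle$.

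\textbf{Step 3: conclusion.} The first variation vanishes, because $\bar{y}$ is normal to $\Sigma$ (or simply because $\bar{\gamma}$ is a minimizer). Minimality then forces the second variation to be nonnegative, which, after dividing by $2$, is exactly the asserted inequality.

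The main obstacle is Step 2: getting the boundary term at $t = 0$ right, including its sign. It should be computed with a variation whose initial curve is a geodesic of $\Sigma$, so that its ambient acceleration is precisely $I\!I(Z(0),Z(0))$ and the boundary contribution is $-\langle I\!I(Z(0),Z(0)), \bar{y}\rangle$.
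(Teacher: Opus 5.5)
Your proposal is correct and is essentially the paper's proof: by Cauchy--Schwarz and the defining inequality of $A_r$, $\bar{\gamma}$ minimizes $\int_0^r |\gamma'(t)|^2\,dt$ among paths with $\gamma(0) \in \Sigma$ and $\gamma(r) = \bar{\gamma}(r)$, and the asserted inequality is the nonnegativity of the second variation. You also correctly identify the boundary term at $t=0$ as $-\langle I\!I(Z(0),Z(0)),\bar{y} \rangle$, using the paper's convention $I\!I(X,Y) = (D_X Y)^\perp$.

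One small fix: a correction $s^2 V(t)$ that does not depend on $s$ cannot keep $\gamma_s(0)$ exactly on $\Sigma$, so the correction must depend on $s$. For example, parallel transport $\exp_{\bar{x}}^{-1}(c(s)) - s\,Z(0)$ along $\bar{\gamma}$ and multiply it by a cutoff equal to $1$ at $t=0$ and vanishing near $t=r$. Also, $c$ need not be a geodesic of $\Sigma$, since only the normal part of $D_s c'$ pairs with $\bar{y}$.
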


\textbf{Proof.} 
Since $(\bar{x},\bar{y}) \in A_r$, we obtain 
\[r \int_0^r |\gamma'(t)|^2 \, dt \geq d(\gamma(0),\bar{\gamma}(r))^2 \geq r^2 \, |\bar{y}|^2\] 
for every smooth path $\gamma: [0,r] \to M$ satisfying $\gamma(0) \in \Sigma$ and $\gamma(r) = \bar{\gamma}(r)$. Therefore, the path $\bar{\gamma}$ minimizes the functional $\int_0^r |\gamma'(t)|^2 \, dt$ among all smooth paths $\gamma: [0,r] \to M$ satisfying $\gamma(0) \in \Sigma$ and $\gamma(r) = \bar{\gamma}(r)$. Hence, the assertion follows from the second variation formula. This completes the proof of Lemma \ref{Riemannian.FWC.second.variation}. \\

\begin{lemma} 
\label{Riemannian.FWC.no.conjugate.points}
Assume that $r>0$ and $(\bar{x},\bar{y}) \in A_r$. We define $\bar{\gamma}(t) := \exp_{\bar{x}}(t \bar{y})$ for all $t \in [0,r]$. Suppose that $W(t)$ is a Jacobi field along $\bar{\gamma}$ with the property that $W(0) \in T_{\bar{x}} \Sigma$ and $D_t W(0) = -\sum_{j=1}^{n-1} \langle I\!I(W(0),e_j),\bar{y} \rangle \, e_j$, where $\{e_1,\hdots,e_{n-1}\}$ denotes an orthonormal basis of $T_{\bar{x}} \Sigma$. If $W(\tau) = 0$ for some $\tau \in (0,r)$, then $W$ vanishes identically.
\end{lemma}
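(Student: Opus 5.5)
We argue by contradiction, mirroring the proof of Lemma \ref{Riemannian.Sobolev.no.conjugate.points}. Suppose that $W(\tau) = 0$ for some $\tau \in (0,r)$, but $W$ does not vanish identically. By uniqueness for the Jacobi equation (a second order linear ODE), $D_t W(\tau) \neq 0$; otherwise $W(\tau) = 0$ and $D_t W(\tau) = 0$ would force $W \equiv 0$.

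The first step is to compute the index form of $W$ on $[0,\tau]$. Since $W$ is a Jacobi field, integration by parts gives
\begin{align*}
&\int_0^\tau \big ( |D_t W(t)|^2 - R(\bar{\gamma}'(t),W(t),\bar{\gamma}'(t),W(t)) \big ) \, dt \\
&= \langle D_t W(\tau),W(\tau) \rangle - \langle D_t W(0),W(0) \rangle.
\end{align*}
The first term vanishes because $W(\tau) = 0$. For the second term we use the initial condition. Since $W(0) \in T_{\bar{x}} \Sigma$ and $\{e_1,\hdots,e_{n-1}\}$ is an orthonormal basis of $T_{\bar{x}}\Sigma$,
\[\langle D_t W(0),W(0) \rangle = -\sum_{j=1}^{n-1} \langle I\!I(W(0),e_j),\bar{y} \rangle \, \langle e_j,W(0) \rangle = -\langle I\!I(W(0),W(0)),\bar{y} \rangle.\]
Hence the integral equals $\langle I\!I(W(0),W(0)),\bar{y} \rangle$. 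Equivalently,
\[-\langle I\!I(W(0),W(0)),\bar{y} \rangle + \int_0^\tau \big ( |D_t W|^2 - R(\bar{\gamma}',W,\bar{\gamma}',W) \big ) \, dt = 0.\]

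Next, define $\tilde{W}$ to equal $W$ on $[0,\tau]$ and $0$ on $[\tau,r]$. This field is continuous and piecewise smooth, satisfies $\tilde{W}(0) = W(0) \in T_{\bar{x}}\Sigma$ and $\tilde{W}(r) = 0$, and the expression in Lemma \ref{Riemannian.FWC.second.variation} evaluated on $\tilde{W}$ equals $0$. Because $D_t W(\tau) \neq 0$, the covariant derivative of $\tilde{W}$ jumps at $t = \tau$.

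\textbf{Main step: corner smoothing.} We now modify $\tilde{W}$ near $\tau$ to obtain a smooth field $Z$ along $\bar{\gamma}$ with $Z(0) = \tilde{W}(0)$, $Z(r) = 0$, and strictly negative value of the functional
\[-\langle I\!I(Z(0),Z(0)),\bar{y} \rangle + \int_0^r \big ( |D_t Z|^2 - R(\bar{\gamma}',Z,\bar{\gamma}',Z) \big ) \, dt.\]
The standard way is to add $\varepsilon V$, where $V$ is a smooth field supported near $\tau$ with $V(\tau) = D_t W(\tau)$. The first variation of the piecewise index form in the direction $V$ is $-\langle D_t W(\tau^-) - D_t W(\tau^+),V(\tau) \rangle = -|D_t W(\tau)|^2 < 0$. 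Since $V$ vanishes near $0$, the boundary term is unchanged. So for small $\varepsilon > 0$ the functional becomes negative. A final small smoothing of the remaining corner changes the value only slightly, preserving strict negativity.

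Such a $Z$ contradicts Lemma \ref{Riemannian.FWC.second.variation}, so $W$ must vanish identically. The only delicate point is this smoothing step. It is the same argument already used in Lemma \ref{Riemannian.Sobolev.no.conjugate.points}, with $D^2 u$ replaced by $-\langle I\!I,\bar{y}\rangle$ and with the extra requirement that $Z(0)$ remain tangent to $\Sigma$. That requirement is automatic, since all modifications are supported away from $t = 0$.
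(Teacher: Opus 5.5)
Your argument is the paper's argument. You argue by contradiction, use the integration-by-parts identity showing that the index integral of $W$ over $[0,\tau]$ equals $\langle I\!I(W(0),W(0)),\bar{y}\rangle$, pass to the truncated field $\tilde W$ on which the second-variation functional vanishes, and then smooth the corner to contradict Lemma~\ref{Riemannian.FWC.second.variation}.

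The paper simply asserts that the smooth field $Z$ exists. Your attempt to construct it has a sign error. Write $F(Z)$ for the left-hand side of the inequality in Lemma~\ref{Riemannian.FWC.second.variation}. Integrating by parts on $[0,\tau]$, using the Jacobi equation and $V(0)=0$, gives
\[F(\tilde W+\varepsilon V) = 2\varepsilon\,\langle D_t W(\tau),V(\tau)\rangle + \varepsilon^2 F(V).\]
So the jump term enters as $+\langle D_t\tilde W(\tau^-)-D_t\tilde W(\tau^+),V(\tau)\rangle$, not with a minus sign. With your choice $V(\tau)=D_tW(\tau)$ and $\varepsilon>0$, the functional becomes positive for small $\varepsilon$: you deepen the corner rather than cut it.

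Taking $V(\tau)=-D_tW(\tau)$, or equivalently $\varepsilon<0$, fixes this. The rest of your argument then goes through, including the final small smoothing that keeps $Z(0)=W(0)\in T_{\bar x}\Sigma$ fixed.
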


\textbf{Proof.} 
We argue by contradiction. Suppose that $W(\tau) = 0$ for some $\tau \in (0,r)$, but $W$ does not vanish identically. Using standard uniqueness results for ODE, we conclude that $D_t W(\tau) \neq 0$. Since $W$ is a Jacobi field, we obtain 
\begin{align*} 
&\int_0^\tau \big ( |D_t W(t)|^2 - R(\bar{\gamma}'(t),W(t),\bar{\gamma}'(t),W(t)) \big ) \, dt \\ 
&= \langle D_t W(\tau),W(\tau) \rangle - \langle D_t W(0),W(0) \rangle \\ 
&= \langle I\!I(W(0),W(0)),\bar{y} \rangle. 
\end{align*}
We define a vector field $\tilde{W}$ along $\bar{\gamma}$ by 
\[\tilde{W}(t) = \begin{cases} W(t) & \text{\rm for $t \in [0,\tau]$} \\ 0 & \text{\rm for $t \in [\tau,r]$.} \end{cases}\] 
Clearly, $\tilde{W}(r) = 0$ and 
\begin{align*} 
&\int_0^r \big ( |D_t \tilde{W}(t)|^2 - R(\bar{\gamma}'(t),\tilde{W}(t),\bar{\gamma}'(t),\tilde{W}(t)) \big ) \, dt \\ 
&= \langle I\!I(\tilde{W}(0),\tilde{W}(0)),\bar{y} \rangle. 
\end{align*} 
Since $D_t W(\tau) \neq 0$, the covariant derivative of $\tilde{W}(t)$ is discontinuous at the point $t = \tau$. Consequently, we can find a smooth vector field $Z$ along $\bar{\gamma}$ such that $Z(0) = \tilde{W}(0)$, $Z(r) = \tilde{W}(r)$, and 
\begin{align*} 
&\int_0^r \big ( |D_t Z(t)|^2 - R(\bar{\gamma}'(t),Z(t),\bar{\gamma}'(t),Z(t)) \big ) \, dt \\ 
&< \langle I\!I(Z(0),Z(0)),\bar{y} \rangle. 
\end{align*}
This contradicts Lemma \ref{Riemannian.FWC.second.variation}. This completes the proof of Lemma \ref{Riemannian.FWC.no.conjugate.points}. \\

\begin{proposition} 
\label{Riemannian.FWC.monotonicity}
Assume that $r>0$ and $(x,y) \in A_r$. Then 
\[\lim_{t \to 0} t^{-1} \, |\det D\Phi_t(x,y)| = 1.\] 
Moreover, 
\[1 - r \, \frac{\langle H(x),y \rangle}{n-1} \geq 0\] 
and the function 
\[t \mapsto t^{-1} \, \Big ( 1 - t \, \frac{\langle H(x),y \rangle}{n-1} \Big )^{-(n-1)} \, |\det D\Phi_t(x,y)|\] 
is monotone decreasing for $t \in (0,r)$. 
\end{proposition}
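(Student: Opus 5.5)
We follow the proof of Proposition \ref{Riemannian.Sobolev.monotonicity}, with the Jacobi fields now adapted to the hypersurface. Fix $r>0$ and $(\bar{x},\bar{y}) \in A_r$, and set $\bar{\gamma}(t) = \exp_{\bar{x}}(t\bar{y})$. We only need the case $\bar{y} \neq 0$; the degenerate case $\bar{y}=0$ is trivial or measure zero and can be handled separately. Let $\nu = \bar{y}/|\bar{y}|$ and let $\{e_1,\hdots,e_{n-1}\}$ be an orthonormal basis of $T_{\bar{x}}\Sigma$. Let $E_i(t)$ be the parallel fields along $\bar{\gamma}$ with $E_i(0)=e_i$. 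For $1 \leq i \leq n-1$, let $X_i$ be the Jacobi field with $X_i(0)=e_i$ and $D_t X_i(0) = -\sum_j \langle I\!I(e_i,e_j),\bar{y}\rangle e_j$. These are the variation fields of $t \mapsto \exp_{x(s)}(t\,y(s))$ along curves in $\Sigma$, where $y(s)$ is the parallel normal extension in the normal bundle. Varying $y$ in the normal direction gives the $n$-th variation field $X_n(t) = t\,E_n(t)$ with $E_n(0)=\nu$. We then compute $|\det D\Phi_t(\bar{x},\bar{y})| = t\,|\det P(t)|$, where $P_{ij}(t) = \langle X_i(t),E_j(t)\rangle$ for $1 \leq i,j \leq n-1$. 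Here we use that $X_i \perp \bar{\gamma}'$ for $i \leq n-1$, by Gauss's lemma: $X_i(0)\perp\bar y$ and $D_tX_i(0)\perp \bar y$, so $\langle X_i,\bar\gamma'\rangle$ is linear in $t$ with zero data. Since $P(0)=\mathrm{id}$, this immediately gives $\lim_{t\to0} t^{-1}|\det D\Phi_t| = 1$.

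Next, Lemma \ref{Riemannian.FWC.no.conjugate.points} shows that $X_1(t),\hdots,X_{n-1}(t)$ are linearly independent for $t\in(0,r)$. Any nontrivial combination is a Jacobi field of the type considered there, so it cannot vanish at any $\tau\in(0,r)$. Hence $\det P(t)>0$ on $(0,r)$. As before, $P'' = -PS$, where $S_{ij} = R(\bar{\gamma}',E_i,\bar{\gamma}',E_j)$ for $i,j\leq n-1$. Also $P'(0) = -\langle I\!I,\bar{y}\rangle$ is symmetric, so $P'P^T$ stays symmetric. It follows that $Q = P^{-1}P'$ is symmetric and satisfies $Q' = -S - Q^2$. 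Now $\mathrm{tr}\,S = \mathrm{Ric}(\bar{\gamma}',\bar{\gamma}') - R(\bar\gamma',E_n,\bar\gamma',E_n)$, and the last term vanishes because $E_n$ is parallel to $\bar{\gamma}'$. Therefore $\mathrm{tr}\,S \geq 0$, and
\[\frac{d}{dt}\mathrm{tr}\,Q \leq -\frac{1}{n-1}(\mathrm{tr}\,Q)^2, \qquad \mathrm{tr}\,Q(0) = -\langle H(\bar{x}),\bar{y}\rangle =: (n-1)\lambda.\]

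The main point is the ODE comparison, because $\lambda$ may be negative. Comparing with the solution $\frac{(n-1)\lambda}{1+\lambda t}$ of the equality case, we obtain
\[\mathrm{tr}\,Q(t) \leq \frac{(n-1)\lambda}{1+\lambda t}\]
as long as $1+\lambda t>0$. Suppose $1+\lambda r<0$, or more generally $1+\lambda\tau=0$ for some $\tau\in(0,r)$. Then the Riccati inequality forces $\mathrm{tr}\,Q\to-\infty$ at some time $\leq \tau$. Concretely, $1/\mathrm{tr}\,Q$ increases at rate at least $1/(n-1)$ while negative, so it reaches $0$ from below. This means $\det P$ hits zero before time $r$, which contradicts $\det P>0$ on $(0,r)$. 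Hence $1+\lambda t>0$ on $(0,r)$, and by continuity $1 - r\frac{\langle H,\bar{y}\rangle}{n-1} = 1+\lambda r \geq 0$.

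Finally, $\frac{d}{dt}\log\det P = \mathrm{tr}\,Q \leq \frac{d}{dt}\log(1+\lambda t)^{n-1}$. So $(1+\lambda t)^{-(n-1)}\det P(t)$ is monotone decreasing on $(0,r)$. Combined with $|\det D\Phi_t| = t\det P(t)$, this gives exactly the asserted monotonicity of $t^{-1}(1+\lambda t)^{-(n-1)}|\det D\Phi_t|$.
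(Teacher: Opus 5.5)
Your proposal is correct and follows essentially the same route as the paper. Both arguments use the Jacobi fields $X_i$ with second-fundamental-form initial data, get linear independence from the no-conjugate-points lemma, pass to the symmetric matrix $Q=P^{-1}P'$ with $\mathrm{tr}\,S=\mathrm{Ric}(\bar\gamma',\bar\gamma')\geq 0$, apply ODE comparison for $\mathrm{tr}\,Q$, and finish with the identity $|\det D\Phi_t|=t\det P(t)$; you simply write out the blow-up argument behind "$1-r\frac{\langle H,\bar y\rangle}{n-1}\geq 0$" and the trivial case $\bar y=0$ that the paper leaves implicit.
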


\textbf{Proof.} 
Let us fix a real number $r>0$ and a point $(\bar{x},\bar{y}) \in A_r$. We define $\bar{\gamma}(t) := \exp_{\bar{x}}(t \bar{y})$ for all $t \in [0,r]$. Let $\{e_1,\hdots,e_{n-1}\}$ be an orthonormal basis of $T_{\bar{x}} \Sigma$. For each $1 \leq i \leq n-1$, we denote by $E_i(t)$ the unique parallel vector field along $\bar{\gamma}$ satisfying $E_i(0) = e_i$. Moreover, for each $1 \leq i \leq n-1$, we denote by $X_i(t)$ the unique Jacobi field along $\bar{\gamma}$ satisfying $X_i(0) = e_i$ and 
\[D_t X_i(0) = -\sum_{j=1}^{n-1} \langle I\!I(e_i,e_j),\bar{y} \rangle \, e_j.\] 
For each $1 \leq i \leq n-1$, $X_i(t)$ is a normal Jacobi field along $\bar{\gamma}$. In other words, $X_i(t) \in \text{\rm span}\{E_1(t),\hdots,E_{n-1}(t)\}$ for all $1 \leq i \leq n-1$ and all $t \in [0,r]$. It follows from Lemma \ref{Riemannian.FWC.no.conjugate.points} that $X_1(t),\hdots,X_{n-1}(t)$ are linearly independent for each $t \in (0,r)$. 

For each $t \in [0,r]$, we define an $(n-1) \times (n-1)$-matrix $P(t)$ by 
\[P_{ij}(t) = \langle X_i(t),E_j(t) \rangle\] 
for $1 \leq i,j \leq n-1$. Moreover, for each $t \in [0,r]$, we define an $(n-1) \times (n-1)$-matrix $S(t)$ by 
\[S_{ij}(t) = R(\bar{\gamma}'(t),E_i(t),\bar{\gamma}'(t),E_j(t))\] 
for $1 \leq i,j \leq n-1$. The matrix $S(t)$ is symmetric for each $t \in [0,r]$. Moreover, 
\[\text{\rm tr}(S(t)) = \text{\rm Ric}(\bar{\gamma}'(t),\bar{\gamma}'(t)) \geq 0\] 
for each $t \in [0,r]$. Since the vector fields $X_1(t),\hdots,X_{n-1}(t)$ are Jacobi fields, we obtain 
\begin{align*} 
P_{ij}''(t) 
&= \langle D_t D_t X_i(t),E_j(t) \rangle \\ 
&= -R(\bar{\gamma}'(t),X_i(t),\bar{\gamma}'(t),E_j(t)) \\ 
&= -\sum_{k=1}^{n-1} P_{ik}(t) \, S_{kj}(t) 
\end{align*}
for all $1 \leq i,j \leq n-1$ and all $t \in [0,r]$. In other words, the function $P(t)$ satisfies 
\[P''(t) = -P(t) S(t)\] 
for all $t \in [0,r]$. Moreover, 
\[P_{ij}(0) = \langle X_i(0),E_j(0) \rangle = \delta_{ij}\] 
and 
\[P_{ij}'(0) = \langle D_t X_i(0),E_j(0) \rangle = -\langle I\!I(e_i,e_j),\bar{y} \rangle\] 
for $1 \leq i,j \leq n-1$. In particular, the matrix $P'(0) P(0)^T$ is symmetric. Moreover, since $S(t)$ is symmetric for each $t \in [0,r]$, it follows that the matrix 
\begin{align*} 
\frac{d}{dt} (P'(t) P(t)^T) 
&= P''(t) P(t)^T + P'(t) P'(t)^T \\ 
&= -P(t) S(t) P(t)^T + P'(t) P'(t)^T 
\end{align*}
is symmetric for each $t \in [0,r]$. Consequently, the matrix $P'(t) P(t)^T$ is symmetric for each $t \in [0,r]$. 

Since $X_1(t),\hdots,X_{n-1}(t)$ are linearly independent for each $t \in (0,r)$, the matrix $P(t)$ is invertible for each $t \in (0,r)$. Since the matrix $P'(t) P(t)^T$ is symmetric for each $t \in (0,r)$, it follows that the matrix $Q(t) := P(t)^{-1} P'(t)$ is symmetric for each $t \in (0,r)$. The function $Q(t)$ satisfies the Riccati equation 
\[Q'(t) = P(t)^{-1} P''(t) - P(t)^{-1} P'(t) P(t)^{-1} P'(t) = -S(t) - Q(t)^2\] 
for all $t \in (0,r)$. Moreover, since $Q(t)$ is symmetric, we obtain $\text{\rm tr}(Q(t)^2) \geq \frac{1}{n-1} \, \text{\rm tr}(Q(t))^2$ for all $t \in (0,r)$. Since $\text{\rm tr}(S(t)) \geq 0$ for all $t \in (0,r)$, it follows that 
\[\frac{d}{dt} \text{\rm tr}(Q(t)) = -\text{\rm tr}(S(t)) - \text{\rm tr}(Q(t)^2) \leq -\frac{1}{n-1} \, \text{\rm tr}(Q(t))^2\] 
for all $t \in (0,r)$. Moreover, 
\[\lim_{t \to 0} \text{\rm tr}(Q(t)) = -\langle H(\bar{x}),\bar{y} \rangle.\] 
A standard ODE comparison principle implies that 
\[1 - r \, \frac{\langle H(\bar{x}),\bar{y} \rangle}{n-1} \geq 0\] 
and 
\[\text{\rm tr}(Q(t)) \leq -\Big ( 1 - t \, \frac{\langle H(\bar{x}),\bar{y} \rangle}{n-1} \Big )^{-1} \, \langle H(\bar{x}),\bar{y} \rangle\] 
for all $t \in (0,r)$.

Since $P(0) = \text{\rm id}$, we know that $\det P(t) > 0$ if $t > 0$ is sufficiently small. Since $P(t)$ is invertible for each $t \in (0,r)$, it follows that $\det P(t) > 0$ for each $t \in (0,r)$. Moreover, 
\[\frac{d}{dt} \log \det P(t) = \text{\rm tr}(Q(t)) \leq -\Big ( 1 - t \, \frac{\langle H(\bar{x}),\bar{y} \rangle}{n-1} \Big )^{-1} \, \langle H(\bar{x}),\bar{y} \rangle\] 
for all $t \in (0,r)$. Consequently, the function 
\[t \mapsto \Big ( 1 - t \, \frac{\langle H(\bar{x}),\bar{y} \rangle}{n-1} \Big )^{-(n-1)} \, \det P(t)\] 
is monotone decreasing for $t \in (0,r)$. 

Finally, we compute the differential of $\Phi_t$ at the point $(\bar{x},\bar{y})$. To that end, we decompose the tangent space to $U$ at the point $(\bar{x},\bar{y})$ into the horizontal and vertical subspaces. Using the connection on the normal bundle of $\Sigma$, we may identify the horizontal subspace at $(\bar{x},\bar{y})$ with the tangent space $T_{\bar{x}} \Sigma$. Moreover, the vertical subspace at $(\bar{x},\bar{y})$ can be identified with $T_{\bar{x}}^\perp \Sigma$. With this understood, the differential $(D\Phi_t)_{(\bar{x},\bar{y})}: T_{\bar{x}} \Sigma \oplus T_{\bar{x}}^\perp \Sigma \to T_{\bar{\gamma}(t)} M$ maps the vector $e_i \in T_{\bar{x}} \Sigma$ to the vector $X_i(t) \in T_{\bar{\gamma}(t)} M$ and the vector $\bar{\gamma}'(0) \in T_{\bar{x}}^\perp \Sigma$ to the vector $t \, \bar{\gamma}'(t) \in T_{\bar{\gamma}(t)} M$. Consequently, $|\det D\Phi_t(\bar{x},\bar{y})| = t \, \det P(t)$ for all $t \in (0,r)$. Putting these facts together, the assertion follows. This completes the proof of Proposition \ref{Riemannian.FWC.monotonicity}. \\

\begin{corollary}
\label{Riemannian.FWC.upper.bound.for.Jacobian}
The Jacobian determinant of $\Phi_r$ satisfies 
\[|\det D\Phi_r(x,y)| \leq r \, \Big ( 1 - r \, \frac{\langle H(x),y \rangle}{n-1} \Big )^{n-1}\] 
for each $r>0$ and each point $(x,y) \in A_r$. 
\end{corollary}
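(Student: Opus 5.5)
This follows from Proposition \ref{Riemannian.FWC.monotonicity} in the same way that Corollary \ref{Riemannian.Sobolev.upper.bound.for.Jacobian} follows from Proposition \ref{Riemannian.Sobolev.monotonicity}. The plan is to use the monotone quantity on $(0,r)$, read off its limit as $t \to 0$, and then pass to $t = r$ by continuity.

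Fix $r>0$ and $(x,y) \in A_r$. Set
\[F(t) := t^{-1} \, \Big ( 1 - t \, \frac{\langle H(x),y \rangle}{n-1} \Big )^{-(n-1)} \, |\det D\Phi_t(x,y)|.\]
Before using $F$, we must check that the bracket is positive for $t \in (0,r)$. Proposition \ref{Riemannian.FWC.monotonicity} gives $1 - r \, \frac{\langle H(x),y \rangle}{n-1} \geq 0$, and the bracket equals $1$ at $t=0$. Since it is affine in $t$, it is strictly positive on $[0,r)$. So $F$ is well defined on $(0,r)$.

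Near $t=0$, the bracket tends to $1$, and the first statement of Proposition \ref{Riemannian.FWC.monotonicity} gives $t^{-1} \, |\det D\Phi_t(x,y)| \to 1$. Hence $\lim_{t \to 0} F(t) = 1$. Since $F$ is monotone decreasing on $(0,r)$, it follows that $F(t) \leq 1$, that is,
\[|\det D\Phi_t(x,y)| \leq t \, \Big ( 1 - t \, \frac{\langle H(x),y \rangle}{n-1} \Big )^{n-1}\]
for every $t \in (0,r)$.

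It remains to pass to the endpoint $t=r$. The map $(t,x,y) \mapsto \exp_x(ty)$ is smooth, so $t \mapsto |\det D\Phi_t(x,y)|$ is continuous on $[0,r]$. The right-hand side is a polynomial in $t$. Letting $t \uparrow r$ gives the asserted bound at $t=r$, and this bound is still meaningful when the bracket vanishes at $t=r$.

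The only point needing care is the endpoint: monotonicity is stated only on the open interval $(0,r)$, where $P(t)$ is known to be invertible. This is why we prove the strict-interior inequality first and only then take the limit. No invertibility at $t = r$ is needed, since both sides are continuous up to $t=r$.
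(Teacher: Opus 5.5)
Your proof is correct and follows the paper's intended argument. The paper states this corollary right after Proposition \ref{Riemannian.FWC.monotonicity} with no written proof, and the implicit reasoning is what you give: the monotone quantity tends to $1$ as $t \to 0$, so it is at most $1$ on $(0,r)$, and the endpoint $t=r$ follows by continuity of $t \mapsto |\det D\Phi_t(x,y)|$. Your check that the bracket is positive on $[0,r)$ and your treatment of the endpoint supply the details the paper leaves unstated.
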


We now continue the proof of Theorem \ref{Riemannian.FWC.estimate}. Using Lemma \ref{Riemannian.FWC.surjectivity} and Corollary \ref{Riemannian.FWC.upper.bound.for.Jacobian}, we obtain 
\begin{align*} 
&\Big | \Big \{ p \in M: \inf_{x \in \Sigma} d(x,p) < r \Big \} \Big | \\ 
&\leq \int_\Sigma \bigg ( \int_{\{y \in T_x^\perp \Sigma: |y| < 1\}} |\det D\Phi_r(x,y)| \, 1_{A_r}(x,y) \, dy \bigg ) \, d\text{\rm vol}(x) \\ 
&\leq \int_\Sigma \bigg ( \int_{\{y \in T_x^\perp \Sigma: |y| < 1\}} r \, \Big ( 1 - r \, \frac{\langle H(x),y \rangle}{n-1} \Big )_+^{n-1} \, dy \bigg ) \, d\text{\rm vol}(x)
\end{align*}
for all $r > 0$. We divide by $r^n$ and send $r \to \infty$. This gives 
\[|B^n| \, \theta \leq \int_\Sigma \bigg ( \int_{\{y \in T_x^\perp \Sigma: |y| < 1\}} \Big ( -\frac{\langle H(x),y \rangle}{n-1} \Big )_+^{n-1} \, dy \bigg ) \, d\text{\rm vol}(x).\] 
On the other hand, since the normal space $T_x^\perp \Sigma$ is one-dimensional, we have 
\[\int_{\{y \in T_x^\perp \Sigma: |y| < 1\}} (-\langle H(x),y \rangle)_+^{n-1} \, dy = \frac{1}{n} \, |H(x)|^{n-1} = \frac{|B^n|}{|S^{n-1}|} \, |H(x)|^{n-1}\] 
for each point $x \in \Sigma$. Putting these facts together, we conclude that 
\[|B^n| \, \theta \leq \frac{|B^n|}{|S^{n-1}|} \int_\Sigma \Big ( \frac{|H(x)|}{n-1} \Big )^{n-1} \, d\text{\rm vol}(x).\] 
This completes the proof of Theorem \ref{Riemannian.FWC.estimate}.


\begin{thebibliography}{99} 
\bibitem{Agostiniani-Fogagnolo-Mazzieri}
V.~Agostiniani, M.~Fogagnolo, and L.~Mazzieri, \textit{Sharp geometric inequalities for closed hypersurfaces in manifolds with nonnegative Ricci curvature,} Invent. Math. 222, 1033--1101 (2020)

\bibitem{Allard}
W.~Allard, \textit{On the first variation of a varifold,} Ann. of Math. 95, 417--491 (1972)

\bibitem{Almgren}
F.J.~Almgren, Jr., \textit{Optimal isoperimetric inequalities,} Indiana Univ. Math. J. 35, 451--547 (1986)

\bibitem{Balogh-Kristaly}
Z.M.~Balogh and A.~Krist\'aly, \textit{Sharp isoperimetric and Sobolev inequalities in spaces with nonnegative Ricci curvature,} Math. Ann. 385, 1747--1773 (2023)

\bibitem{Brendle1}
S.~Brendle, \textit{The isoperimetric inequality for a minimal submanifold in Euclidean space,} J. Amer. Math. Soc. 34, 595--603 (2021)

\bibitem{Brendle2}
S.~Brendle, \textit{The logarithmic Sobolev inequality for a submanifold in Euclidean space,} Comm. Pure Appl. Math. 75, 449--454 (2022)

\bibitem{Brendle3}
S.~Brendle, \textit{Sobolev inequalities in manifolds with nonnegative curvature,} Comm. Pure Appl. Math. 76, 2192--2218 (2023)

\bibitem{Brendle4}
S.~Brendle, \textit{Minimal hypersurfaces and geometric inequalities,} Ann. Fac. Sci. Toulouse 32, 179--201 (2023)

\bibitem{Brendle-Eichmair}
S.~Brendle and M.~Eichmair, \textit{Proof of the Michael-Simon-Sobolev inequality using optimal transport,} J. Reine Angew. Math. 804, 1--10 (2023)

\bibitem{Cabre1}
X.~Cabr\'e, \textit{Nondivergent elliptic equations on manifolds with nonnegative curvature,} Comm. Pure Appl. Math. 50, 623--665 (1997)

\bibitem{Cabre2}
X.~Cabr\'e, \textit{Equacions en derivades parcials, geometria i control estoc\`astic,} Butl. Soc. Catalana Mat. 15, 7--27 (2000)

\bibitem{Cabre3}
X.~Cabr\'e, \textit{Elliptic PDEs in probability and geometry. Symmetry and regularity of solutions,} Discrete Cont. Dyn. Systems 20, 425--457 (2008)

\bibitem{Carleman} 
T.~Carleman, \textit{Zur Theorie der Minimalfl\"achen,} Math. Z. 9, 154--160 (1921)

\bibitem{Castillon}
P.~Castillon, \textit{Submanifolds, isoperimetric inequalities and optimal transportation,} J. Funct. Anal. 259, 79--103 (2010)

\bibitem{Chen}
B.-Y.~Chen, \textit{On the total curvature of immersed manifolds, I: an inequality of Fenchel-Borsuk-Willmore,} Amer. J. Math. 93, 148--162 (1971)

\bibitem{Choe}
J.~Choe, \textit{The isoperimetric inequality for a minimal surface with radially connected boundary,} Ann. Scuola Norm. Sup. Pisa 17, 583--593 (1990)

\bibitem{Cordero-Erausquin-McCann-Schmuckenschlager}
D.~Cordero-Erausquin, R.J.~McCann, and M.~Schmuckenschl\"ager, \textit{A Riemannian interpolation inequality \`a la Borell, Brascamp and Lieb,} Invent. Math. 146, 219--257 (2001)

\bibitem{Ecker}
K.~Ecker, \textit{Logarithmic Sobolev inequalities on submanifolds of Euclidean space,} J. Reine Angew. Math. 522, 105--118 (2000)

\bibitem{Feinberg}
J.~Feinberg, \textit{The isoperimetric inequality for doubly-connected minimal surfaces in $\mathbb{R}^n$,} J. d'Anal. Math. 32, 249--278 (1977)

\bibitem{Fenchel}
W.~Fenchel, \textit{\"Uber die Kr\"ummung und Windung geschlossener Randkurven,} Math. Ann. 101, 238--252 (1929)

\bibitem{Gromov}
M.~Gromov, \textit{Metric structures for Riemannian and non-Riemannian spaces,} Progress in Mathematics vol. 152, Birkh\"auser, Boston, 1999

\bibitem{Heintze-Karcher}
E.~Heintze and H.~Karcher, \textit{A general comparison theorem with applications to volume estimates for submanifolds,} Ann. Sci. \'Ecole Norm. Sup. 11, 451--470 (1978)

\bibitem{Hsiung}
C.C.~Hsiung, \textit{Isoperimetric inequalities for two-dimensional Riemannian manifolds with boundary,} Ann. of Math. 73, 213--220 (1961)

\bibitem{Huisken}
G.~Huisken, \textit{An isoperimetric concept for the mass in general relativity,} Lecture given at the Institute for Advanced Study on March 20, 2009 
%\begin{verbatim}
%https://www.ias.edu/video/marston-morse-isoperimetric-concept-mass-general-relativity
%\end{verbatim}

\bibitem{Li-Schoen-Yau}
P.~Li, R.~Schoen, and S.T.~Yau, \textit{On the isoperimetric inequality for minimal surfaces,} Ann. Scuola Norm. Sup. Pisa 11, 237--244 (1984)

\bibitem{Lusternik}
L.A.~Lusternik, \textit{Die Brunn-Minkowskische Ungleichung f\"ur beliebige me\ss{}bare Mengen,} Dokl. Acad. Sci. URSS 8, 55--58 (1935)

\bibitem{McCann1}
R.J.~McCann, \textit{A Convexity Theory for Interacting Gases and Equilibrium Crystals,} Ph.D. thesis, Princeton University, 1994

\bibitem{McCann2}
R.J.~McCann, \textit{A convexity principle for interacting gases,} Adv. Math. 128, 153–179 (1997)

\bibitem{McCann-Guillen}
R.J.~McCann and N.~Guillen, \textit{Five lectures on optimal transportation: geometry, regularity, and applications,} Analysis and Geometry of Metric Measure Spaces, CRM Proc. Lecture Notes vol. 56, pp.~145--180, American Mathematical Society, Providence RI, 2013

\bibitem{Michael-Simon}
J.H.~Michael and L.M.~Simon, \textit{Sobolev and mean value inequalities on generalized submanifolds of $\mathbb{R}^n$,} Comm. Pure Appl. Math. 26, 361--379 (1973)

\bibitem{Osserman-Schiffer}
R.~Osserman and M.~Schiffer, \textit{Doubly-connected minimal surfaces,} Arch. Rational Mech. Anal. 58, 285--307 (1975)

\bibitem{Reid}
W.T.~Reid, \textit{The isoperimetric inequality and associated boundary problems,} J. Math. Mech. 8, 897--906 (1959)

\bibitem{Stein-Shakarchi1}
E.M.~Stein and R.~Shakarchi, \textit{Complex Analysis,} Princeton University Press, 2003

\bibitem{Stein-Shakarchi2}
E.M.~Stein and R.~Shakarchi, \textit{Real analysis: measure theory, integration, and Hilbert spaces,} Princeton University Press, 2009

\bibitem{Stone}
A.~Stone, \textit{On the isoperimetric inequality on a minimal surface,} Calc. Var. PDE 17, 369--391 (2003)

\bibitem{Trudinger}
N.~Trudinger, \textit{Isoperimetric inequalities for quermassintegrals,} Ann. Inst. H. Poincar\'e Anal. Non Lin\'eaire 11, 411--425 (1994)

\bibitem{Willmore}
T.J.~Willmore, \textit{Note on embedded surfaces,} A. Sti. Univ. "Al. I. Cuza", Iasi, Sect. Ia Mat. 11B, 493--496 (1965)
\end{thebibliography}
\end{document}